\documentclass[11pt,reqno]{amsart}
\usepackage[foot]{amsaddr}

\usepackage{amsmath} 
\usepackage{amsfonts,amssymb,amsthm,bm}
\usepackage{amsrefs}
\usepackage{enumitem}
\usepackage[utf8]{inputenc}
\usepackage[T1]{fontenc}
\usepackage[dvipsnames]{xcolor}
\usepackage{stackrel}

\usepackage{fullpage}


\def\dd{\mathrm{\,d}} 
\newcommand\abs[1]{\left|#1\right|} 
\newcommand\norm[1]{\left\lVert#1\right\rVert} 
\newcommand*\lap{\mathop{}\!\mathbin\Delta} 
\newcommand*\grad{\mathop{}\!\mathbin\nabla} 
\renewcommand*\div{\mathop{}\!\mathbin\nabla\cdot} 

\DeclareMathOperator{\diam}{diam} 
\newcommand{\Chi}{\mathcal{X}} 
\let\emptyset\varnothing 
\DeclareMathOperator{\Lip}{Lip} 
\DeclareMathOperator{\ch}{ch} 
\DeclareMathOperator*{\essinf}{ess\,inf}
\DeclareMathOperator*{\esssup}{ess\,sup}
\DeclareMathOperator*{\loc}{loc} 

\def\Xint#1{\mathchoice
{\XXint\displaystyle\textstyle{#1}}%
{\XXint\textstyle\scriptstyle{#1}}%
{\XXint\scriptstyle\scriptscriptstyle{#1}}%
{\XXint\scriptscriptstyle\scriptscriptstyle{#1}}%
\!\int}
\def\XXint#1#2#3{\mkern3mu{\setbox0=\hbox{$#1{#2#3}{\int}$ }
\vcenter{\hbox{$#2#3$ }}\kern-.6\wd0}}

\def\dashint{\Xint-}

\theoremstyle{plain}
\newtheorem{theorem}{Theorem}

\newtheorem{lemma}[theorem]{Lemma}
\numberwithin{theorem}{section}

\theoremstyle{remark}
\newtheorem{remark}[theorem]{Remark}

\theoremstyle{definition}
\newtheorem{dfn}[theorem]{Definition}

\begin{document}

\nocite{*}

\title{Weighted norm inequalities in a bounded domain by the sparse domination method}

\author{Emma-Karoliina Kurki}
\address[E-K.K.]{Aalto University, Department of Mathematics and Systems Analysis, P.O. Box 11100, FI-00076 Aalto, Finland}
\email{emma-karoliina.kurki@aalto.fi}
\author{Antti V. Vähäkangas}
\address[A.V.V.]{University of Jyvaskyla, Department of Mathematics and Statistics, P.O. Box 35, FI-40014 University of Jyvaskyla, Finland} 
\email{antti.vahakangas@iki.fi}
\thanks{The authors would like to thank Lucio Boccardo and Juha Kinnunen for the research question, as well as Andrei Lerner and Ritva Hurri-Syrjänen for pointing out valuable references. Emma-Karoliina Kurki thanks the Emil Aaltonen Foundation for the young researcher's grant that made this project possible. This research was supported by the Academy of Finland.}
\subjclass[2010]{35A23, 42B25, 42B37}
\date{\today}

\begin{abstract}
 We prove a local two-weight Poincaré inequality 
for cubes using the sparse domination method that has been influential in harmonic analysis. The proof involves a localized version of the Fefferman--Stein inequality for the sharp maximal function. 
By establishing a local-to-global result in a bounded domain satisfying a Boman chain condition, we show a two-weight $p$-Poincar\'e inequality in such domains. As an application we show that certain nonnegative supersolutions of the $p$-Laplace equation and distance weights are $p$-admissible in a bounded domain, in the sense that they support versions of the $p$-Poincar\'e inequality.
\end{abstract}

\maketitle

\section{Introduction}

Poincaré inequalities are useful tools in analysis, especially so in the theory of partial differential equations (PDEs). The present paper stems from the question whether it is possible to establish a weighted Poincaré inequality with respect to weights defined on a bounded domain only. This setting is natural from the viewpoint of analysis of PDEs and calls for a localized argument, which we borrow from harmonic analysis.

To begin with, we provide sufficient conditions for a general two-weight Sobolev--Poincaré inequality
\begin{equation}\label{e.general_two_w}
\left(\int_{Q_0}\abs{u(x)-u_{Q_0}}^qw(x)\dd x\right)^\frac{1}{q} \leq C\left(\int_{Q_0} \abs{\grad u(x)}^pv(x)\dd x\right)^{\frac{1}{p}}
\end{equation}
to hold for every $u\in\Lip(Q_0)$ with $1<p\leq q<\infty$. This result is Theorem \ref{lokaali} below, which is
applied to show that \eqref{e.p_poincare} holds in cubes $Q_0$ with $4Q_0\subset \Omega$ and $\frac{2n}{n+1}<p<\infty$, including the case $p=2$ of the classical Laplacian. Weighted norm inequalities such as the above are relevant to the study of nonlinear PDEs and so-called \emph{$p$-admissible} weights; for instance, see Chapter 20 in \cite{MR2305115}. Furthermore, the proof quite naturally yields a local variant of the Fefferman–Stein inequality for the so-called \emph{sharp maximal function} (Theorem \ref{maxima}).

Having established the local inequality \eqref{e.general_two_w}, we extend it to domains satisfying a \emph{Boman chain condition}. The geometry of Boman domains allows us to propagate the estimate from single Whitney cubes to the entire domain adapting an idea of Iwaniec and Nolder \cite{ino}; see also Chua \cite{MR1140667}. As a result, in Theorem \ref{main} we present the two-weight inequality
\begin{equation}\label{e.main}
\left(\inf_{c\in\mathbb{R}^n} \int_\Omega \abs{u(x)-c}^qw(x)\dd x\right)^\frac{1}{q} \leq C\left(\int_\Omega \abs{\grad u(x)}^pv(x)\dd x\right)^\frac{1}{p}
\end{equation}
under the assumption that the weights $w$ and $\sigma = v^{-1/(p-1)}$  satisfy suitable local doubling and $A_\infty$ conditions, as well as a Muckenhoupt-type compatibility condition in dilated Whitney cubes and their dyadic subcubes $Q^*\subset\Omega$:
\begin{equation*}
\left(\frac{1}{\abs{Q^*}^{1-1/n}}\right)^pw(Q^*)^{\frac{p}{q}}\sigma(Q^*)^{p-1} \leq K.
\end{equation*}

For the sake of demonstration, we present two applications to distance weights, as well as one for nonnegative supersolutions to the $p$-Laplace equation. Namely, we establish the weighted Poincar\'e inequality 
\begin{equation}\label{e.p_poincare}
\int_{Q_0}\abs{u(x)-u_{Q_0}}^pw(x)\dd x \leq Cl(Q_0)^p\int_{Q_0} \abs{\grad u(x)}^pw(x)\dd x,
\end{equation}
where $u\in\Lip(Q_0)$, and $C = C(n, p, \Omega) > 0$. This inequality is known to hold for all cubes $Q_0\subset\mathbb{R}^n$ whenever $w\in W^{1,p}_{\loc}(\mathbb{R}^n)$ is a nonnegative weak supersolution to the $p$-Laplace equation in $\mathbb{R}^n$ and $\frac{2n}{n+1}<p<\infty$; see \cite{MR2305115}*{p. 74}. We show a corresponding local variant of this result. We assume that the weight $w\in W^{1,p}_{\loc}(\Omega)$ is a nonnegative weak supersolution of the $p$-Laplace equation in a bounded domain $\Omega\subset\mathbb{R}^n$, and prove \eqref{e.p_poincare} in cubes $Q_0\subset\Omega$ such that $4Q_0\subset\Omega$. 

A key new feature of Theorem \ref{lokaali} is that the assumptions concerning the weights $v$ and $w$ are localized to the fixed cube $Q_0$, which is indeed necessary in order to establish the local version of \eqref{e.p_poincare} in Theorem \ref{plapplication_thm}. This distinguishes our main result \eqref{e.main} (Theorem \ref{main}) from other two-weight inequalities in the same vein. In particular, our result provides a more strictly localized variant of a theorem due to Chua \cite{MR1140667}*{Theorem 2.14}. We only assume the weight $w$ to be doubling in $\Omega$, which is a weaker notion than the global doubling required by Chua. The downside of our approach are the ensuing supplementary $A_\infty$ conditions. These can be seen as an artefact of the method, which will be discussed next. Other sufficient conditions for two-weight inequalities are given by Chanillo and Wheeden \cite{MR1189903}, and Turesson \cite{ref_Tur}*{Theorem 2.6.1}. Maz'ya \cite{ref_Maz}*{Section 3.8} presents a necessary and sufficient capacitary condition for fractional Sobolev spaces. See also  Dyda et al.\ \cite{MR3900847}, Hurri and Hurri--Syrj\"anen \cites{Hurri1990,Hurri1992}, and Muckenhoupt--Wheeden's early paper \cite{MR0340523}.

In order to prove \eqref{e.general_two_w}, we proceed in two stages. Both are based on the idea of \emph{sparse domination}, in which one first provides a pointwise inequality in terms of a sparse dyadic operator. Consequently, the problem is reduced to showing a uniform weighted norm inequality for a significantly simpler class of sparse dyadic operators. In the first stage, we follow the idea in \cite{MR3695871} and show the pointwise inequality 
\begin{equation}\label{e.sparse_demo}
\abs{f(x)-f_{Q_0}}\leq C\sum_{Q\in\mathcal{S}}\Chi_Q(x)\dashint_Q\abs{f(y)-f_Q}\dd y,
\end{equation}
where the collection $\mathcal{S}$ of dyadic cubes inside $Q_0$ depends on $f$, and is \emph{sparse} in the sense that there are pairwise disjoint, measurable subsets $E_S\subset S\in\mathcal{S}$ incorporating a sufficiently large part of $S$ in terms of weighted measure. Moreover, the constant $C$ is independent of $f$, and therefore sharp maximal functions can be used to control the sparse dyadic operator on the right-hand side of \eqref{e.sparse_demo}. The Poincar\'e inequality, followed by another pointwise sparse domination argument pertaining to the fractional maximal function and due to Pérez \cite{MR1052009}, allows us to conclude the proof of \eqref{e.general_two_w}.
 
To the authors' knowledge, the application of the sparse domination method to Poincar\'e inequalities is new, and the present work could also be considered a demonstration of its scope of application. The method has been very influential in harmonic analysis, in which several weighted inequalities for singular integrals and potential operators have been established by using the idea of pointwise dyadic domination, including the celebrated $A_2$ theorem. For a selection of recent examples and developments, we refer to Pereyra's lecture notes \cite{revo}. Particularly influential works for our purposes include those of Lerner, such as \cite{MR3695871} (with Ombrosi and Rivera-Ríos) and \cite{MR2721744}. Compared to state-of-the-art instances of the sparse domination argument, our version is vastly simpler, yet perfectly sufficient for its purpose and with the further advantage of being localized. 

\section{Setup}

 A \emph{cube} in $\mathbb{R}^n$ is a half-open set of the form
\[
Q = \left[a_1,b_1\right)\times\ldots\times\left[a_n,b_n\right),
\]
with $b_1-a_1=\ldots=b_n-a_n$. For our purposes, a cube $Q = Q(x_Q,r_Q)$ is most conveniently determined by its midpoint $x_Q$ and side length $2r_Q=b_1-a_1$. We denote the side length of a given cube $Q$ by $l(Q)$.  If $N>0$, we also adopt the shorthand notation $NQ=Q(x_Q,Nr_Q)$.

We denote the characteristic function of a set $E\subset X$ by $\Chi_{E}$, that is, $\Chi_{E}(x)=1$ if $x\in E$ and $\Chi_{E}(x)=0$ if $x\in X\setminus E$.

Any open, nontrivial, proper subset $\Omega\subset\mathbb{R}^n$ admits a \emph{Whitney decomposition}, which we denote by $\mathcal{W}=\mathcal{W}(\Omega)$. The standard construction can be found e.~g. in \cite{MR2445437}, Appendix J. Choosing cubes to be half-open, the Whitney cubes are disjoint. Furthermore, they cover the open set $\Omega$: $\cup_{Q\in\mathcal{W}}Q = \Omega$. For a cube $Q = Q(x,r)\in\mathcal{W}$, the corresponding \emph{dilated cube} is denoted by $Q^* = \frac{9}{8}Q = Q(x,\frac{9}{8}r)$. Such dilated cubes have bounded overlap, which means that $\sum_{Q\in\mathcal{W}}\Chi_{Q^*}\leq C(n)$. 
Moreover, since the side length of a dilated Whitney cube $Q^*$ is comparable to its distance from the boundary of the set, there exists a constant $C=C(n)$ such that 
\begin{equation}\label{sideledil}
\frac{l(Q^*)}{C(n)} \leq d(Q^*,\partial\Omega) \leq C(n)l(Q^*). 
\end{equation}

The family of Lipschitz continuous functions on a set $\Omega$ is denoted $\Lip(\Omega)$. \emph{Local} classes of functions mean that the property in question holds for every compact set $K\subset\Omega$; these are indicated with a subscript, such as $\Lip_{\loc}(\Omega)$ for the set of locally Lipschitz continuous functions on $\Omega$.

For a cube $Q_0\subset\mathbb{R}^n$, the collection of its \emph{dyadic children}, denoted $\ch_\mathcal{D}(Q_0)$, are the $2^n$ cubes with side length $l(Q)/2$ obtained by bisecting each edge. Continuing this process recursively, we obtain the infinite collection $\mathcal{D}(Q_0)$ of \emph{dyadic subcubes} that consists of $Q_0$ and its dyadic descendants in any generation. We will be routinely making use of the fact that these cubes are nested: if $Q, Q'\in\mathcal{D}(Q_0)$, then either one is contained in the other or the cubes are disjoint.  Thanks to the nestedness property, each cube in $\mathcal{D}(Q_0)\setminus \{Q_0\}$ has a unique \emph{dyadic parent}, denoted $\pi Q$: the cube $Q'\in\mathcal{D}(Q_0)$ such that $Q\in\ch_\mathcal{D}(Q')$. 

With respect to a generic collection $\mathcal{E}\subset\mathcal{D}(Q_0)$ of cubes with $Q_0\in\mathcal{E}$, the \emph{$\mathcal{E}$-parent} $\pi_\mathcal{E}Q$ of any dyadic cube $Q\subset Q_0$ is the minimal cube in $\mathcal{E}$ that contains $Q$. Notice that $Q$ is not necessarily in $\mathcal{E}$ to begin with, and the inclusion need not be strict, so $\pi_\mathcal{E}Q=Q$ whenever $Q\in\mathcal{E}$. 
Analogously to the dyadic case, 
The $\mathcal{E}$-children $\ch_\mathcal{E}(Q)$ of a cube $Q\in\mathcal{E}$ are the maximal cubes in $\mathcal{E}$ strictly contained in $Q$.

In the following, subcubes that are constructed beginning with a fixed cube $Q_0$ will be referred to as ``dyadic cubes''. In other words, all ``dyadic cubes'' are $Q_0$-dyadic, whether or not this is spelled out. In the first part, we will be operating inside a cube $Q_0\subset\mathbb{R}^n$; in the proof of the local-to-global result, $Q_0$ will turn out to be a dilatation of a cube $Q\subset\mathcal{W}(\Omega)$ of the Whitney decomposition. 

We say that a locally integrable function $w$ is a \emph{weight} in an open set $\Omega$, if $w(x)>0$ for
almost every $x\in \Omega$. 
The weighted measure (or ``weight'') of a measurable set $E\subset \Omega$,  in our case typically a cube, with respect to $w$ is 
\begin{equation*}
w(E) = \int_E w(x)\dd x.
\end{equation*}
The integral average of a  function $f\in L^1(E)$ over a measurable set $E\subset \Omega$ is written $f_E$ for short, and the corresponding average with respect to a weight $w$ is indicated by adding another subscript:
\begin{equation*}
f_{w;E} = \frac{1}{w(E)}\int_Ef(x)w(x)\dd x.
\end{equation*}

Throughout the proof of the local result, we are dealing with Muckenhoupt $A_\infty$ weights in the cube $Q_0$. In fact, we only need to assume the $A_\infty$ property in dyadic cubes, as will be detailed shortly. 

\begin{dfn}\label{ainfty0} 
A  weight  $w$ in a cube $Q_0$ belongs to the \emph{dyadic Muckenhoupt $A_\infty$} class, denoted $w\in A^d_\infty(Q_0)$, if there exist constants $C_w$ and $\delta_w$ in $\left(0,\infty\right)$ such that for all dyadic cubes $Q\subset Q_0$ and all measurable subsets $E\subset Q$ we have
\begin{equation*}
\frac{w(E)}{w(Q)}\leq C_w\left(\frac{\abs{E}}{\abs{Q}}\right)^{\delta_w}. 
\end{equation*}
\end{dfn}

We will need to make intermediate estimates in terms of certain maximal functions, which are introduced next.
\begin{dfn} For a cube $Q_0$, $0\leq\alpha<n$ and $f\in L^1(Q_0)$, we define the \emph{dyadic fractional maximal function}
\begin{equation*}
M^d_{\alpha, Q_0}f(x) = \sup_{\substack{Q\subset Q_0\\ Q\ni\,x}}\frac{1}{\abs{Q}^{1-\alpha/n}}\int_Q\abs{f(y)}\dd y,
\end{equation*}
where the supremum is taken over all dyadic cubes $Q\subset Q_0$ such that $x\in Q$. 
\end{dfn}
\begin{dfn} Let $Q_0$ be a cube, $w$ a weight in $Q_0$, and $f\in L^1(Q_0; w\dd x)$. We define \emph{the weighted dyadic maximal function}
\begin{equation*}
M^{d, w}_{Q_0}f(x) = \sup_{\substack{Q\subset Q_0\\ Q\ni\,x}}\frac{1}{w(Q)}\int_Q\abs{f(y)}w(y)\dd y,
\end{equation*}
again taking the supremum over all dyadic cubes $Q\subset Q_0$ such that $x\in Q$. 
\end{dfn}
The following standard lemma shows that the weighted dyadic maximal function is bounded in $L^p(Q_0; w\dd x)$.
\begin{lemma}\label{strong} Let $Q_0\subset\mathbb{R}^n$ a cube, $1<p<\infty$, and $w$ a weight in $Q_0$. Then there is a constant $C = C(p) > 0$ such that for all $f\in L^p(Q_0; w\dd x)$
\begin{equation*}
\int_{Q_0}\left(M^{d, w}_{Q_0}f(x)\right)^pw(x)\dd x \leq C\int_{Q_0}\abs{f(x)}^pw(x)\dd x.
\end{equation*}
\end{lemma}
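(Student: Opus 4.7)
The plan is the standard Marcinkiewicz interpolation argument, pivoting on a weak-type $(1,1)$ bound with respect to the weighted measure $w\dd x$. The trivial endpoint is $L^\infty$: every weighted average satisfies $\frac{1}{w(Q)}\int_Q\abs{f}w\dd y \leq \norm{f}_{L^\infty(Q_0;w\dd x)}$, so $M^{d,w}_{Q_0}f(x)\leq \norm{f}_{L^\infty(Q_0;w\dd x)}$ pointwise.

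For the weak-type $(1,1)$ estimate, fix $\lambda>0$ and let $E_\lambda=\{x\in Q_0:M^{d,w}_{Q_0}f(x)>\lambda\}$. For each $x\in E_\lambda$ there is a dyadic cube $Q\subset Q_0$ with $x\in Q$ and $\frac{1}{w(Q)}\int_Q\abs{f}w\dd y>\lambda$. Because $Q_0$ itself caps the family from above (in the sense of inclusion), the collection of such cubes containing $x$ has a maximal element. Using the nestedness of dyadic cubes, the maximal cubes obtained as $x$ varies over $E_\lambda$ form a countable pairwise disjoint family $\{Q_j\}$ with $E_\lambda=\bigcup_j Q_j$. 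Summing the defining inequality,
\[
w(E_\lambda)=\sum_j w(Q_j)\leq \frac{1}{\lambda}\sum_j\int_{Q_j}\abs{f(y)}w(y)\dd y\leq \frac{1}{\lambda}\int_{Q_0}\abs{f(y)}w(y)\dd y.
\]

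Marcinkiewicz interpolation between this weak-$(1,1)$ bound and the $L^\infty$ bound (both with respect to $w\dd x$) then yields the strong-type $(p,p)$ estimate with a constant depending only on $p$, for $1<p<\infty$. The only step that requires any thought is the stopping argument, and here the dyadic setting is what makes it easy: no covering lemma is needed, and the doubling property of $w$ is not required, since the level set is literally a disjoint union of the selected cubes. I anticipate no real obstacle.
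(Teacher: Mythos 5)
Your proof is correct and follows essentially the same route as the paper: the $L^\infty$ bound is trivial, the weak-$(1,1)$ bound comes from selecting maximal dyadic cubes exceeding the threshold (disjoint by nestedness, covering the level set), and Marcinkiewicz interpolation does the rest. Your remark that no covering lemma or doubling is needed in the dyadic setting is exactly the point the paper implicitly relies on.
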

\begin{proof}
The statement for $1<p<\infty$ is proven by interpolation. To begin with, we immediately observe that $\lVert M^{d,w}_{Q_0}f\rVert_{L^\infty(Q_0;w\dd x)}\leq \norm{f}_{L^\infty(Q_0;w\dd x)}$. It remains to verify that the maximal operator is of weak type $(1,1)$. To this end, let $f\in L^1(Q_0; w\dd x)$ and $t>0$. We claim that 
\begin{equation}\label{weako}
w\left(\{x\in Q_0\mathbin{:} M^{d,w}_{Q_0}f(x) > t\}\right) \leq\frac{1}{t}\int_{Q_0}\abs{f(x)}w(x)\dd x.
\end{equation}
For brevity, denote $E_t = \{x\in Q_0\mathbin{:} M^{d,w}_{Q_0}f(x) > t\}$. To see \eqref{weako}, fix $t>0$ and consider the collection of all dyadic cubes $Q\subset Q_0$ satisfying 
\begin{equation}\label{condi}
\frac{1}{w(Q)}\int_Q\abs{f(y)}w(y)\dd y > t.
\end{equation}
If the collection is empty, we have $M^{d,w}_{Q_0}f(x)\leq t$ almost everywhere in $Q_0$ and $w(E_t) = 0$. Otherwise, we fix a collection of maximal cubes $\left\{Q_i\right\}$ satisfying \eqref{condi}. In particular, these cubes are pairwise disjoint. We have $E_t = \bigcup_{i=1}^\infty Q_i$, and
\begin{align*}
w(E_t) & \leq w\left(\bigcup_{i=1}^\infty Q_i\right) = \sum_{i=1}^\infty w(Q_i)\\
& \leq \frac{1}{t}\sum_{i=1}^\infty\int_{Q_i}\abs{f(x)}w(x)\dd x = \frac{1}{t}\int_{E_t} \abs{f(x)}w(x)\dd x\\
& \leq \frac{1}{t}\int_{Q_0} \abs{f(x)}w(x)\dd x, 
\end{align*}
which proves \eqref{weako}. Applying the Marcinkiewicz interpolation theorem (\cite{MR2445437}, Theorem 1.3.2), we obtain for $1<p<\infty$
\begin{equation*}
\norm{M^{d,w}_{Q_0}f}_{L^p(Q_0;w\dd x)}^p \leq \frac{p2^p}{p-1}\norm{f}_{L^p(Q_0; w\dd x)}^p. 
\end{equation*}
\end{proof}

\section{Sparse domination I}

The main result in this section is the sparse domination lemma \ref{SparseDom}, a weighted variant of Lemma 5.1 in \cite{MR3695871}. The auxiliary lemma \ref{SparsePre} will provide us with a sparse collection of cubes that appears in the sparse domination lemma. More specifically, the condition \eqref{S3} means that the $\mathcal{S}$-children
of any $S\in\mathcal{S}$ only occupy a controlled fraction of the $w$-measure of $S$. In Lemma \ref{SparseDom}, this property will be used to build a pairwise disjoint collection of sets $E_S\subset S$, each of which has a large $w$-measure compared to that of $S\in\mathcal{S}$, controlled by a parameter $\eta>0$ depending on the $A_\infty$ constants of $w$. This is what is meant by saying that $\mathcal{S}$ is a {\em sparse collection of cubes.} 

 In the sparse domination lemma \ref{SparseDom}, sparse collections provided by the auxiliary lemma are used to define dyadic sparse operators of the form
\[
\sum_{Q\in\mathcal{S}}\Chi_Q(x)\dashint_Q\abs{f(y)-f_Q}\dd y.
\]
Ultimately due to the inequality \eqref{S1}, these dyadic sparse operators can be used to dominate the quantity $\lvert f(x)-f_{Q_0}\rvert$ pointwise. On the other hand, the dyadic sparse operators can be easily controlled by duality and maximal function arguments due to existence of the disjoint sets $E_S$. This also reflects the general principles behind the sparse domination paradigm.

\begin{lemma}\label{SparsePre}
Let $Q_0$ be a cube in $\mathbb{R}^n$, and $f\in L^1(Q_0)$. Let $w\in A^d_\infty(Q_0)$ with $C_w>0$ and $\delta_w>0$.
There exists a collection $\mathcal{S}$ of dyadic cubes such that each cube $S\in\mathcal{S}$ satisfies
\begin{enumerate}[label=\normalfont{(\alph*)}]
\item \label{syksi} If $Q\subset Q_0$ is dyadic cube such that $\pi_\mathcal{S}Q = S$, then
\begin{equation}\label{S1}
\dashint_Q\abs{f(x)-f_S}\dd x\leq\rho\dashint_S\abs{f(x)-f_S}\dd x. 
\end{equation}
\item \label{skolme} There is a constant $\rho = \rho(C_w,\delta_w) >1$ such that 
\begin{equation}\label{S3}
\sum_{S'\in\ch_\mathcal{S}(S)}w(S') \leq C_w\rho^{-\delta_w}w(S) < w(S). 
\end{equation}
\end{enumerate}
\end{lemma}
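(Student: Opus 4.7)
The plan is to construct $\mathcal{S}$ by a dyadic stopping-time argument starting from $Q_0$, with the stopping rule chosen so that (a) follows by maximality and (b) follows from the dyadic $A_\infty$ property of $w$.

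First choose $\rho>1$ depending only on $C_w,\delta_w$ so that $C_w\rho^{-\delta_w}<1$; for instance $\rho=(2C_w)^{1/\delta_w}$. Declare $Q_0\in\mathcal{S}$. Given $S\in\mathcal{S}$, if $\dashint_S|f-f_S|\dd x=0$ set $\mathrm{ch}_\mathcal{S}(S)=\varnothing$; otherwise let $\mathrm{ch}_\mathcal{S}(S)$ be the collection of maximal dyadic cubes $Q\subsetneq S$ satisfying the stopping inequality
\[
\dashint_Q\abs{f(x)-f_S}\dd x>\rho\dashint_S\abs{f(x)-f_S}\dd x.
\]
These maximal cubes are pairwise disjoint by nestedness of dyadic cubes. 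Add them to $\mathcal{S}$ and iterate; the full collection is the union of the generations produced.

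To verify \ref{syksi}, suppose $Q\subset Q_0$ is dyadic with $\pi_\mathcal{S}Q=S$. By definition of $\mathcal{S}$-parent, $Q$ is not strictly contained in any cube of $\mathrm{ch}_\mathcal{S}(S)$, so $Q$ itself fails the stopping inequality, which is precisely \eqref{S1}. To verify \ref{skolme}, set $E=\bigcup_{S'\in\mathrm{ch}_\mathcal{S}(S)}S'$. Since the $S'$ are pairwise disjoint and each satisfies the stopping inequality,
\[
\abs{E}=\sum_{S'}\abs{S'}\leq\frac{1}{\rho\dashint_S\abs{f-f_S}\dd x}\sum_{S'}\int_{S'}\abs{f-f_S}\dd x\leq\frac{\abs{S}}{\rho}.
\]
(The $\dashint_S|f-f_S|\dd x=0$ case is trivial since then $E=\varnothing$.) The dyadic $A_\infty$ bound applied to $E\subset S$ upgrades this to $w(E)\leq C_w(\abs{E}/\abs{S})^{\delta_w}w(S)\leq C_w\rho^{-\delta_w}w(S)$, and disjointness of the $S'$ gives $w(E)=\sum_{S'}w(S')$, proving \eqref{S3}. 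The choice of $\rho$ guarantees $C_w\rho^{-\delta_w}<1$.

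The only real obstacle is selecting the stopping threshold $\rho$: it must simultaneously be large enough to force the weighted measure to decrease under the $A_\infty$ bound, yet still a function of $C_w,\delta_w$ alone. The key observation is that the stopping inequality plus a Chebyshev-type estimate automatically yields the unweighted geometric decay $\abs{E}/\abs{S}\leq 1/\rho$, and the $A_\infty^d$ assumption on $w$ then transfers this to the weighted bound required in \eqref{S3} — this is precisely the role of the dyadic $A_\infty$ hypothesis on $w$ in the whole argument.
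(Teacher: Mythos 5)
Your proof is correct and follows essentially the same stopping-time construction and Chebyshev-plus-$A_\infty^d$ argument as the paper; the only cosmetic differences are the explicit choice $\rho=(2C_w)^{1/\delta_w}$ (which is valid since $A_\infty^d$ forces $C_w\ge 1$) and the slightly more careful handling of the degenerate case $\dashint_S|f-f_S|\,\mathrm{d}x=0$.
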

\begin{proof}
We will construct the collection $\mathcal{S}$ by a stopping-time argument. To begin with, fix a function $f\in L^1(Q_0)$ and a constant $\rho = \rho(C_w,\delta_w) >1$ such that $C_w\rho^{-\delta_w}<1$. We may assume that $\dashint_{Q_0}\abs{f(x)-f_{Q_0}}\dd x>0$; otherwise, $f$ is constant at the Lebesgue points of $Q_0$ and we can take $\mathcal{S}=\{Q_0\}$. First, we place $Q_0$ inside $\mathcal{S}$ and proceed recursively: for each $Q_0$-dyadic cube $S\in\mathcal{S}$, we add to $\mathcal{S}$ the maximal dyadic cubes $S'\subset S$ that satisfy the stopping condition
\begin{equation}\label{stopping}
\dashint_{S'}\abs{f(x)-f_S}\dd x>\rho\dashint_S\abs{f(x)-f_S}\dd x. 
\end{equation}
This process is iterated \emph{ad infinitum} if necessary. As a result, we obtain a collection $\mathcal{S}$ of dyadic cubes in $Q_0$. Claim \ref{syksi} is an immediate consequence of the stopping-time construction. Namely, let $S\in\mathcal{S}$. Recall that if $Q\subset Q_0$ is a cube, $\pi_\mathcal{S}Q$ is the minimal cube in $\mathcal{S}$ that contains $Q$. If $Q\subset Q_0$ is a dyadic cube such that $\pi_\mathcal{S}Q = S$, then
\begin{equation*}
\dashint_Q\abs{f(x)-f_S}\dd x\leq\rho\dashint_S\abs{f(x)-f_S}\dd x.
\end{equation*}

As for \ref{skolme}, fix $S\in\mathcal{S}$ and recall that $\ch_\mathcal{S}(S)$ are the the maximal cubes in $\mathcal{S}$ that are strictly contained in $S$. In particular, we notice that this collection is disjoint. As per the stopping condition \eqref{stopping}, for every $S'\in \ch_\mathcal{S}(S)$ it holds that
\begin{equation*}
\dashint_{S'}\abs{f(x)-f_S}\dd x > \rho\dashint_S\abs{f(x)-f_S}\dd x.
\end{equation*}
Since the collection $\ch_\mathcal{S}(S)$ is disjoint, the $A_\infty^d(Q_0)$ condition of $w$ implies that for all $S\in\mathcal{S}$
\begin{align*}
\frac{\sum_{S'\in\ch_\mathcal{S}(S)}w(S')}{w(S)} & \leq C_w\left(\frac{\sum_{S'\in\ch_\mathcal{S}(S)}\abs{S'}}{\abs{S}}\right)^{\delta_w}\\
& \leq C_w \rho^{-\delta_w}  \left(\sum_{S'\in\ch_\mathcal{S}(S)}\frac{\int_{S'}\abs{f(x)-f_S}\dd x}{\int_S\abs{f(x)-f_S}\dd x}\right)^{\delta_w} \leq C_w\rho^{-\delta_w} < 1. 
\end{align*}
\end{proof}

With our sparse collection of cubes at hand, we are now set to show the existence of the pairwise disjoint sets $E_Q$ and to estimate $\lvert f(x)-f_{Q_0}\rvert$ pointwise with a dyadic  operator involving a sum of mean oscillations taken over the sparse collection.
\begin{lemma}\label{SparseDom}
Let $Q_0$ be a cube in $\mathbb{R}^n$, $w\in A_\infty^d(Q_0)$ with constants $C_w>0$ and $\delta_w>0$, and $f\in L^1(Q_0)$. Then there is a collection $\mathcal{S}$ of dyadic cubes in $Q_0$ satisfying the following conditions:
\begin{enumerate}[label=\normalfont{(\alph*)}]
\item \label{sparse} There is a constant $\eta=\eta(C_w,\delta_w)>0$ and a collection $\left\{E_Q\mathbin{:} Q\in\mathcal{S}\right\}$ of pairwise disjoint  sets such that for every $Q\in\mathcal{S}$, $E_Q$ is a measurable subset of $Q$ with $w(E_Q)\geq \eta w(Q)$. 
\item \label{joku} For every Lebesgue point $x\in Q_0$ of $f$, we have
\begin{equation}\label{claima}
\abs{f(x)-f_{Q_0}}\leq C\sum_{Q\in\mathcal{S}}\Chi_Q(x)\dashint_Q\abs{f(y)-f_Q}\dd y
\end{equation}
with $C=C(n, C_w,\delta_w)>0$. 
\end{enumerate}
\end{lemma}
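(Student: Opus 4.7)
The plan is to take the sparse collection $\mathcal{S}$ produced by Lemma~\ref{SparsePre}, with the stopping parameter $\rho=\rho(C_w,\delta_w)>1$ chosen there so that $C_w\rho^{-\delta_w}<1$, and to verify the two conclusions separately.

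For part~\ref{sparse} I would set
\[
E_Q := Q\setminus\bigcup_{Q'\in\ch_\mathcal{S}(Q)}Q',\qquad Q\in\mathcal{S}.
\]
Pairwise disjointness follows directly from the definition of $\mathcal{S}$-children: if $R,Q\in\mathcal{S}$ are distinct and not disjoint as cubes, then one is strictly contained in the other, and the smaller one lies inside some element of $\ch_\mathcal{S}$ of the larger, hence has been excised from the corresponding $E$-set. The measure estimate is immediate from \eqref{S3}: $w(E_Q)\geq w(Q)-\sum_{Q'\in\ch_\mathcal{S}(Q)}w(Q')\geq (1-C_w\rho^{-\delta_w})w(Q)$, so we may take $\eta:=1-C_w\rho^{-\delta_w}>0$, depending only on $C_w$ and $\delta_w$.

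For part~\ref{joku}, fix a Lebesgue point $x\in Q_0$ and enumerate the $\mathcal{S}$-cubes containing $x$ as a strictly nested chain $Q_0=S_0\supsetneq S_1\supsetneq\cdots$; since these are dyadic, $l(S_{k+1})\leq l(S_k)/2$, so $l(S_k)\to 0$ if the chain is infinite. My plan is to apply a telescoping identity,
\[
f(x)-f_{Q_0}=\sum_{k\geq 0}(f_{S_{k+1}}-f_{S_k})
\]
in the infinite case (using $f_{S_k}\to f(x)$ by the Lebesgue point property), or
\[
f(x)-f_{Q_0}=(f(x)-f_{S_N})+\sum_{k=0}^{N-1}(f_{S_{k+1}}-f_{S_k})
\]
if the chain terminates at $S_N$, and then to bound every individual difference by $\dashint_{S_k}|f-f_{S_k}|$ up to a multiplicative constant.

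The core technical step, and the one I expect to be the main obstacle, is the per-level bound. Writing $\pi S_{k+1}$ for the ordinary dyadic parent of $S_{k+1}$, I would estimate
\[
|f_{S_{k+1}}-f_{S_k}|\leq \dashint_{S_{k+1}}|f-f_{S_k}|\leq 2^n\dashint_{\pi S_{k+1}}|f-f_{S_k}|,
\]
and then show that $\pi_\mathcal{S}(\pi S_{k+1})=S_k$: the dyadic cube $\pi S_{k+1}$ properly contains $S_{k+1}$ and lies inside $S_k$, and by maximality of $S_{k+1}$ in $\ch_\mathcal{S}(S_k)$ no $\mathcal{S}$-cube can sit strictly between $S_{k+1}$ and $S_k$, forcing the minimal $\mathcal{S}$-cube above $\pi S_{k+1}$ to be $S_k$ itself. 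Applying \eqref{S1} then gives $|f_{S_{k+1}}-f_{S_k}|\leq 2^n\rho\dashint_{S_k}|f-f_{S_k}|$. In the finite-chain case, the boundary term $|f(x)-f_{S_N}|$ is handled by the same device applied to the shrinking ordinary dyadic cubes $Q_j\ni x$ with $Q_j\subset S_N$: minimality of $S_N$ forces $\pi_\mathcal{S} Q_j=S_N$, \eqref{S1} yields $|f_{Q_j}-f_{S_N}|\leq \rho\dashint_{S_N}|f-f_{S_N}|$, and the Lebesgue point property lets me pass to the limit. Summing the telescope and recognising the right-hand side as $\sum_{S\in\mathcal{S}}\chi_S(x)\dashint_S|f-f_S|$ then delivers \eqref{claima} with $C=2^n\rho$, a constant depending only on $n$, $C_w$ and $\delta_w$.
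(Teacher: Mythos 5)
Your proposal is correct, and the underlying argument is essentially the paper's: Lemma~\ref{SparsePre} supplies $\mathcal{S}$ and $\rho$; the sets $E_Q$ are defined by excising $\mathcal{S}$-children, with disjointness and the $w$-lower bound following exactly as you write; and part~\ref{joku} is a telescoping estimate in which each increment is controlled via the dyadic parent and property~\eqref{S1}, producing the same constant $2^n\rho$. The one presentational difference is worth noting: the paper first expands $f(x)-f_{Q_0}$ as $\sum_{Q\in\mathcal{D}(Q_0)}\Delta_Q f(x)$ with the dyadic difference operators $\Delta_Q$, then regroups that sum by $\mathcal{S}$-parent $\pi_\mathcal{S} Q$ and splits each group over $E_S$ versus $S\setminus E_S$; you instead telescope directly along the chain $S_0\supsetneq S_1\supsetneq\cdots$ of $\mathcal{S}$-cubes containing $x$, handling the tail either by $f_{S_k}\to f(x)$ or by the boundary term $|f(x)-f_{S_N}|$. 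When you unwind the paper's inner sums $\sum_{Q:\pi_\mathcal{S}Q=S}\Delta_Qf(x)$, they collapse to precisely $f_{S_{k+1}}-f_{S_k}$ or $f(x)-f_{S_N}$, so the two computations are identical; your version simply reaches those increments without routing through $\Delta_Q$, which is arguably the cleaner way to say the same thing. Your justifications of $\pi_\mathcal{S}(\pi S_{k+1})=S_k$ (no $\mathcal{S}$-cube lies strictly between an $\mathcal{S}$-child and its $\mathcal{S}$-parent) and of $\pi_\mathcal{S}Q_j=S_N$ for small dyadic $Q_j\ni x$ inside the minimal $S_N$ are both sound.
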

\begin{remark}Condition \ref{sparse} means that $\mathcal{S}$ is a sparse collection of cubes with respect to the weight $w$. In \ref{joku}, recall that almost every point of $Q_0$ is a Lebesgue point of $f$.
\end{remark}
\begin{proof} 
 Lemma \ref{SparsePre} provides us with a collection $\mathcal{S}\subset \mathcal{D}(Q_0)$. We are going to construct the family $\{E_Q:Q\in\mathcal{S}\}$ by removing selected parts of the cubes $S\in\mathcal{S}$, namely their $\mathcal{S}$-children. For every $S\in\mathcal{S}$, let
\begin{equation}\label{ES}
E_S = S\setminus\bigcup_{S'\in\ch_\mathcal{S}(S)}S'.
\end{equation}
We show that $\left\{E_S\mathbin{:}S\in\mathcal{S}\right\}$ is the collection of pairwise disjoint sets postulated by \ref{sparse}. To prove disjointness, fix $S, R\in\mathcal{S}$ such that $S\neq R$. If, $S\cap R = \emptyset$, then clearly $E_S\cap E_R = \emptyset$. If, say, $R\subset S$, there is a cube $S'\in\ch_\mathcal{S}(S)$ such that $R\subset S'$ and therefore $E_R\cap E_S \subset S'\cap E_S = \emptyset$. Hence $\left\{E_S\mathbin{:}S\in\mathcal{S}\right\}$ is a collection of pairwise disjoint sets.

We still need to show that $w(E_S)\geq\eta w(S)$ for a fixed $S\in\mathcal{S}$. By \eqref{S3}, it holds that 
\[ \sum_{S'\in\ch_\mathcal{S}(S)}w(S')\leq C_w\rho^{-\delta_w}w(S). \]
Furthermore, the collection $\ch_\mathcal{S}(S)$ is pairwise disjoint, so we have 
\begin{equation*}
w(E_S) = w(S)-\sum_{S'\in\ch_\mathcal{S}(S)}w(S') \geq \eta w(S),
\end{equation*}
where $\eta = 1-C_w\rho^{-\delta_w} > 0$ (recall that $C_w\rho^{-\delta_w}<1$). This completes the proof of condition \ref{sparse}. 

To prove that condition \ref{joku} holds, we introduce the following ``dyadic difference'' operator:
\begin{equation*}
\Delta_Qf(x) = \sum_{Q'\in\ch_\mathcal{D}(Q)}\Chi_{Q'}(x)(f_{Q'}-f_Q),
\end{equation*}
where $Q$ is a $Q_0$-dyadic cube and $\ch_\mathcal{D}(Q)$ the collection of its $2^n$ dyadic children. To begin with, we fix a Lebesgue point $x\in Q_0$ of $f$ to estimate the left-hand side of \eqref{claima} by telescoping in terms of these dyadic differences:
\begin{align}\label{xsplit} 
\nonumber \abs{f(x)-f_{Q_0}} \Chi_{Q_0}(x) & = \abs{\sum_{Q\in\mathcal{D}(Q_0)}\Delta_Qf(x)} = \abs{\sum_{S\in\mathcal{S}}\sum_{Q\mathbin{:}\pi_\mathcal{S}Q=S}\Delta_Qf(x)}\\
& \leq \sum_{S\in\mathcal{S}}\abs{\sum_{Q\mathbin{:}\pi_\mathcal{S}Q=S}\Delta_Qf(x)}.
\end{align}
Fix now a $S\in\mathcal{S}$ and split the innermost sum with respect to the set $E_S$ defined by \eqref{ES}: 
\begin{equation*}
\sum_{Q\mathbin{:}\pi_\mathcal{S}Q=S}\Delta_Qf(x) =  \Chi_{S\setminus E_S}(x) \sum_{Q\mathbin{:}\pi_\mathcal{S}Q=S}\Delta_Qf(x) + \Chi_{E_S}(x) \sum_{Q\mathbin{:}\pi_\mathcal{S}Q=S}\Delta_Qf(x). 
\end{equation*}
We estimate each sum separately; the aim is to control each in terms of $\kappa(S) = \dashint_S\abs{f(x)-f_S}\dd x$. Beginning with the first one, we obtain by telescoping 
\begin{align*}
\Chi_{S\setminus E_S}(x) & \sum_{Q\mathbin{:}\pi_\mathcal{S}Q=S}\Delta_Qf(x) = \sum_{S'\in\ch_\mathcal{S}(S)}\sum_{Q\mathbin{:}\pi_\mathcal{S}Q = S}\Chi_{S'}(x)\Delta_Qf(x)\\
 & = \sum_{S'\in\ch_\mathcal{S}(S)}\sum_{Q\mathbin{:}\pi_\mathcal{S}Q = S}\sum_{Q'\in\ch_\mathcal{D}Q}\Chi_{S'}(x)\Chi_{Q'}(x)\left(f_{Q'}-f_Q\right).
\end{align*} 
Here, the first step follows by the fact that $S\setminus E_S = \cup_{S'\in\ch_\mathcal{S}(S)}S'$. Depending on $x\in S$ there is a unique ``tower'' of cubes, beginning from $S$ and down to the dyadic parent of $S'$, which is the unique $\mathcal{S}$-child of $S$ such that $x\in S'$. As a result, the ensuing double sum is a telescope, resulting in
\begin{equation*}
\Chi_{S\setminus E_S}(x) \sum_{Q\mathbin{:}\pi_\mathcal{S}Q=S}\Delta_Qf(x) = \sum_{S'\in\ch_\mathcal{S}(S)}\Chi_{S'}(x)(f_{S'}-f_S). 
\end{equation*}

Fix $S'\in\ch_\mathcal{S}(S)$ and let $\pi S'$ denote the dyadic parent of $S'$. Notice that $\pi_\mathcal{S}(\pi S') = S\in\mathcal{S}$, so we may use the property \eqref{S1} to estimate 
\begin{align}\label{xsplit1}
\nonumber \abs{f_{S'}-f_S} & = \abs{\kern2pt\dashint_{S'}\left(f(x)-f_S\right)\dd x} \leq \dashint_{S'}\abs{f(x)-f_S}\dd x\\
\nonumber & \leq 2^n\dashint_{\pi S'}\abs{f(x)-f_S}\dd x\\
& \leq \rho 2^n\dashint_S\abs{f(x)-f_S}\dd x = \rho 2^n\kappa(S). 
\end{align}
As for the second sum, fix $x\in E_S$ and let $\left(Q_k\right)_{k\in\mathbb{N}}$ be a sequence of dyadic cubes such that $x\in Q_k\subset S$ for all $k\in\mathbb{N}$, and $\abs{Q_k}\rightarrow 0$ as $k\rightarrow\infty$. Thus $\left(Q_k\right)_{k\in\mathbb{N}}$ shrinks nicely to $x$. Since $x$ is a Lebesgue point of $f$, we  may telescope
\begin{equation*}
\Chi_{E_S}(x) \sum_{Q\mathbin{:}\pi_\mathcal{S}Q=S}\Delta_Qf(x) = \Chi_{E_S}(x)(f(x)-f_S) =\lim_{k\rightarrow\infty} (f_{Q_k}-f_S).
\end{equation*}
Since $x\in Q_k\cap E_S$ and $Q_k\subset S$, we have $\pi_\mathcal{S}(Q_k) = S$ for every $k\in\mathbb{N}$. Hence, by the property \eqref{S1},
\begin{align*}
\abs{f_{Q_k}-f_S} & = \abs{\dashint_{Q_k}\left(f(x)-f_S\right)\dd x} \leq \dashint_{Q_k}\abs{f(x)-f_S}\dd x\\
& \leq \rho\dashint_S\abs{f(x)-f_S}\dd x = \rho\kappa(S).
\end{align*} 
Since this estimate is uniform with respect to $k$, we conclude that 
\begin{equation}\label{xsplit2}
\abs{f(x)-f_S} \leq \rho\kappa(S). 
\end{equation}
Collecting the estimates \eqref{xsplit}, \eqref{xsplit1}, and \eqref{xsplit2}, we find that
\begin{align*}
\abs{f(x)-f_{Q_0}}\Chi_{Q_0} & \leq \sum_{S\in\mathcal{S}}\abs{\sum_{Q\mathbin{:}\pi_\mathcal{S}Q=S}\Delta_Qf(x)}\\
& = \sum_{S\in\mathcal{S}}\left(\sum_{S'\in\ch_\mathcal{S}(S)}\Chi_{S'}(x)\rho 2^n\kappa(S) + \Chi_{E_S}(x)\rho\kappa(S)\right)\\
& \leq \rho 2^n \sum_{S\in\mathcal{S}}\Chi_S(x)\kappa(S), 
\end{align*} 
which concludes the proof.
\end{proof}

\section{Local Fefferman--Stein inequality}

The first sparse domination lemma and a duality argument lead to a result for the dyadic sharp maximal function,
which is a variant of the Fefferman--Stein inequality (see \cite{hardies}, Chapter III). This result is of independent interest.
\begin{dfn} For a cube $Q_0$ and $f\in L^1(Q_0)$, we define \emph{the dyadic sharp maximal function} by 
\begin{equation*}
M_{Q_0}^{d,\sharp}f(x) = \sup_{\substack{Q\subset Q_0 \\ x\ni Q}} \dashint_Q\abs{f(y)-f_Q}\dd y, 
\end{equation*}
where the supremum is taken over all dyadic cubes $Q\subset Q_0$ such that $x\in Q$.
\end{dfn} 
The following is a localized and weighted variant of the Fefferman--Stein inequality \cite{MR0447953}; see also Theorem III.3 in \cite{hardies}. The proof relies on the fact that dyadic sparse operators can be controlled by duality and maximal function arguments due to existence of the disjoint sets $\{E_S\,:\, S\in\mathcal{S}\}$ associated with the sparse collection of cubes $\mathcal{S}$.

\begin{theorem}\label{maxima}
Let  $Q_0\subset\mathbb{R}^n$ a cube,  $1<p<\infty$, $w\in A_\infty^d(Q_0)$, and $f\in L^1(Q_0)$. Then
\begin{equation*}
\int_{Q_0}\abs{f(x)-f_{Q_0}}^pw(x)\dd x \leq C\int_{Q_0} \left(M^{d,\sharp}_{Q_0}f(x)\right)^p w(x)\dd x,
\end{equation*}
where $C = C(n, p, C_w, \delta_w) > 0$. Here $C_w$ and $\delta_w$ are the $A_\infty^d(Q_0)$ constants for $w$. 
\end{theorem}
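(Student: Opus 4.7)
The plan is to combine the sparse domination of Lemma \ref{SparseDom} with a standard duality argument and the weighted maximal function bound of Lemma \ref{strong}. First I would apply Lemma \ref{SparseDom} to the function $f \in L^1(Q_0)$ with the weight $w \in A^d_\infty(Q_0)$: this yields a collection $\mathcal{S}$ of dyadic subcubes of $Q_0$ together with pairwise disjoint measurable sets $E_Q \subset Q$ satisfying $w(E_Q) \geq \eta w(Q)$, such that at almost every $x \in Q_0$
\begin{equation*}
\abs{f(x)-f_{Q_0}} \leq C \sum_{Q \in \mathcal{S}} \Chi_Q(x)\, a_Q, \qquad a_Q := \dashint_Q \abs{f(y)-f_Q}\dd y.
\end{equation*}

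Raising to the $p$th power and integrating against $w$, it suffices to control the $L^p(Q_0;w\dd x)$ norm of the sparse dyadic operator $T f(x) = \sum_{Q\in\mathcal{S}} \Chi_Q(x)\, a_Q$ by $\lVert M^{d,\sharp}_{Q_0} f \rVert_{L^p(Q_0;w\dd x)}$. The key observation is that the defining averages of $M^{d,\sharp}_{Q_0}f$ dominate $a_Q$ pointwise: for every $x \in Q$, and in particular for every $x \in E_Q$,
\begin{equation*}
a_Q \leq M^{d,\sharp}_{Q_0} f(x).
\end{equation*}

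Next I would pass to duality in $L^p(Q_0;w\dd x)$. Fix a nonnegative $g \in L^{p'}(Q_0; w\dd x)$ with $\lVert g \rVert_{L^{p'}(w\dd x)} \leq 1$; then
\begin{equation*}
\int_{Q_0} T f(x)\, g(x)\, w(x)\dd x = \sum_{Q \in \mathcal{S}} a_Q \int_Q g(y) w(y)\dd y.
\end{equation*}
For each $Q \in \mathcal{S}$, the weighted average $\frac{1}{w(Q)}\int_Q g w$ is bounded by $M^{d,w}_{Q_0} g(x)$ for every $x \in Q$, and in particular for every $x \in E_Q$. Using $w(Q) \leq \eta^{-1} w(E_Q)$ and the pointwise bound on $a_Q$ over $E_Q$, I would write
\begin{equation*}
a_Q \int_Q g w\dd y \leq \frac{1}{\eta} \int_{E_Q} M^{d,\sharp}_{Q_0} f(x)\, M^{d,w}_{Q_0} g(x)\, w(x)\dd x.
\end{equation*}
Summing over $Q \in \mathcal{S}$ and using pairwise disjointness of the sets $E_Q$, the right-hand side telescopes to an integral over $Q_0$. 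Applying Hölder's inequality in $L^p(w\dd x) \times L^{p'}(w\dd x)$ and then Lemma \ref{strong} to bound $\lVert M^{d,w}_{Q_0} g \rVert_{L^{p'}(w\dd x)}$ by a constant times $\lVert g \rVert_{L^{p'}(w\dd x)} \leq 1$ yields
\begin{equation*}
\int_{Q_0} T f(x)\, g(x)\, w(x)\dd x \leq \frac{C(p)}{\eta}\, \lVert M^{d,\sharp}_{Q_0} f \rVert_{L^p(Q_0;w\dd x)}.
\end{equation*}
Taking supremum over admissible $g$ recovers $\lVert T f \rVert_{L^p(Q_0;w\dd x)}$ and completes the proof.

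I do not anticipate a major obstacle: the only nontrivial input beyond the sparse lemma is the standard $L^{p'}(w)$-boundedness of $M^{d,w}_{Q_0}$, which is exactly Lemma \ref{strong}. The delicate bookkeeping is making sure that $a_Q$ is replaced by the pointwise bound on $E_Q$ (not on $Q$) before invoking the disjointness of the $E_Q$'s, which is precisely where the sparse structure pays off.
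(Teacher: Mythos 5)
Your proof is correct and follows essentially the same route as the paper: sparse domination from Lemma \ref{SparseDom}, a duality pairing against $g\in L^{p'}(Q_0;w\dd x)$, replacing $w(Q)$ by $\eta^{-1}w(E_Q)$, dominating $a_Q$ and the weighted average of $g$ by $M^{d,\sharp}_{Q_0}f$ and $M^{d,w}_{Q_0}g$ pointwise on $E_Q$, then using disjointness of the $E_Q$, H\"older, and Lemma \ref{strong}. The bookkeeping point you flag at the end is exactly the one the paper handles the same way.
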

\begin{proof} Fix a $f\in L^1(Q_0)$ and let $\mathcal{S}$ be the associated collection of cubes given by the sparse domination lemma \ref{SparseDom}. On $Q_0$, define the function
\begin{equation*}
\widetilde{f}(x) = \sum_{Q\in\mathcal{S}}\Chi_Q(x)\dashint_Q\abs{f(y)-f_Q}\dd y.
\end{equation*}
By the sparse domination lemma \ref{SparseDom}, we have
\begin{equation*}
\int_{Q_0}\abs{f(x)-f_{Q_0}}^pw(x)\dd x \leq C(n, p, C_w, \delta_w)\int_{Q_0} \widetilde{f}(x)^pw(x)\dd x.
\end{equation*}
We estimate the $p$th root of the last integral by duality. Namely, it is enough to show that there is a constant $C=C(\eta,p)$ such that
\begin{equation*}
\abs{\int_{Q_0}\widetilde{f}(x)g(x)w(x)\dd x} \leq C \left(\int_{Q_0} \left(M^{d,\sharp}_{Q_0}f(x)\right)^pw(x)\dd x\right)^{\frac{1}{p}}
\end{equation*}
for every bounded measurable function $g$ in $Q_0$ with $\norm{g}_{L^q(Q_0; w\dd x)} = 1$ and $1/p+1/q=1$. Fix such a function $g$; we have \begin{equation}\label{xkertis}
\abs{\int_{Q_0}\widetilde{f}(x)g(x)w(x)\dd x} = \sum_{Q\in\mathcal{S}}\dashint_Q\abs{f(y)-f_Q}\dd y \abs{\int_{Q_0}\Chi_Q(x)g(x)w(x)\dd x}.
\end{equation}
Again thanks to the sparse domination lemma \ref{SparseDom} \ref{sparse}, we may estimate for each $Q\in\mathcal{S}$ 
\begin{align}\label{xxkertis}
\nonumber 
\abs{\int_{Q_0}\Chi_Q(x)g(x)w(x)\dd x} &=\abs{\int_Qg(x)w(x)\dd x} \\
& \leq w(Q)\cdot\frac{1}{w(Q)}\int_Q\abs{g(x)}w(x)\dd x \\
\nonumber &\leq \eta^{-1}w(E_Q)\cdot\frac{1}{w(Q)}\int_Q \lvert g(x)\rvert w(x)\dd x.
\end{align}
Combine \eqref{xkertis} and \eqref{xxkertis}, and apply Lemma \ref{strong}:
\begin{align*}
& \abs{\int_{Q_0}\widetilde{f}(x)g(x)w(x)\dd x}\\
& \hspace*{3em} \leq \eta^{-1}\sum_{Q\in\mathcal{S}}w(E_Q) \dashint_Q\abs{f(y)-f_Q}\dd y \cdot \frac{1}{w(Q)}\int_Q \lvert g(x)\rvert w(x)\dd x \\
& \hspace*{3em} \leq C(\eta)\sum_{Q\in\mathcal{S}}\int_{E_Q}M^{d,\sharp}_{Q_0}f(x) M^{d, w}_{Q_0}g(x)w(x)\dd x \\
& \hspace*{3em} \leq C\int_{Q_0}M^{d,\sharp}_{Q_0}f(x) M^{d, w}_{Q_0}g(x) w(x)\dd x\\
& \hspace*{3em} \leq C\norm{M^{d,\sharp}_{Q_0}f}_{L^p(Q_0;w\dd x)}\norm{M^{d,w}_{Q_0}g}_{L^q(Q_0;w\dd x)}\\
& \hspace*{3em} \leq C(\eta, p)\norm{g}_{L^q(Q_0; w\dd x)}\cdot \left(\int_{Q_0}\left(M^{d,\sharp}_{Q_0}f(x)\right)^p w(x)\dd x\right)^\frac{1}{p},
\end{align*}
which, recalling that $\norm{g}_{L^q(Q_0; w\dd x)} = 1$, is the desired result.
\end{proof}

In the proof of the two-weight inequality (Theorem \ref{lokaali}), we need to estimate the dyadic sharp maximal function in terms of the dyadic fractional maximal function of the gradient. To this end, we apply the following $(q,p)$-Poincaré inequality on cubes, which we state without proof. See \cite{MR1814364}, p.~164, for details. 
\begin{lemma}\label{morePoinca} Let $Q\subset \mathbb{R}^n$ be a cube and $1\le p<\infty$. Let $1\le q\le \frac{np}{n-p}$ for $1\le p<n$, 
and  $1\le q<\infty$ for $n\le p<\infty$. Then there is a constant $C = C(n,p,q)$ such that for all $u\in\Lip(Q)$
\begin{equation*}
\left(\dashint_Q\abs{u(x)-u_Q}^q\dd x\right)^\frac{1}{q}\leq Cl(Q)\left(\dashint_Q\abs{\grad u(x)}^p\dd x\right)^\frac{1}{p}. 
\end{equation*}\qed
\end{lemma}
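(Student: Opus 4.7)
The plan is to reduce to the unit cube by a standard scaling argument, establish a pointwise Riesz-potential representation for $u(x)-u_Q$, and then invoke the mapping properties of the Riesz potential to convert a gradient bound in $L^p$ into an oscillation bound in $L^q$.

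\emph{Step 1 (Scaling).} I would first reduce to the case $Q = [0,1)^n$. Given $u \in \Lip(Q)$ with $Q = Q(x_Q,r_Q)$ of side length $l=l(Q)$, define $v(y) = u(x_Q + l(y - (1/2,\ldots,1/2)))$ on the unit cube $Q_1 = [0,1)^n$. Then $\grad v(y) = l\,\grad u(\cdot)$, and changing variables in the averaged integrals gives
\[
\Bigl(\dashint_Q\abs{u-u_Q}^q\dd x\Bigr)^{1/q} = \Bigl(\dashint_{Q_1}\abs{v-v_{Q_1}}^q\dd y\Bigr)^{1/q}, \quad \Bigl(\dashint_Q\abs{\grad u}^p\dd x\Bigr)^{1/p} = l^{-1}\Bigl(\dashint_{Q_1}\abs{\grad v}^p\dd y\Bigr)^{1/p}.
\]
Thus it suffices to prove the estimate on $Q_1$ with constant $C=C(n,p,q)$; the factor $l(Q)$ appears automatically upon scaling back.

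\emph{Step 2 (Riesz potential representation).} Since $u \in \Lip(Q_1) \subset W^{1,\infty}(Q_1)$, for almost every $x,y \in Q_1$ the segment integration formula yields
\[
u(x)-u(y) = \int_0^1 \grad u(y + t(x-y))\cdot(x-y)\dd t.
\]
Averaging in $y$ over $Q_1$ and using Fubini together with the change of variables $z = y + t(x-y)$ gives the classical pointwise bound
\[
\abs{u(x)-u_{Q_1}} \leq C(n) \int_{Q_1} \frac{\abs{\grad u(z)}}{\abs{x-z}^{n-1}}\dd z =: C(n)\,I_1(\abs{\grad u}\Chi_{Q_1})(x),
\]
where $I_1$ denotes the Riesz potential of order $1$. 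This step is standard (e.g.\ \cite{MR1814364}); the convexity of $Q_1$ is essential here so that the segments stay inside.

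\emph{Step 3 (Riesz potential estimates).} It then remains to estimate $\|I_1(\abs{\grad u}\Chi_{Q_1})\|_{L^q(Q_1)}$ by $\|\grad u\|_{L^p(Q_1)}$ with the correct exponents. For $1\le p<n$ and $q=\tfrac{np}{n-p}$ this is exactly the Hardy--Littlewood--Sobolev theorem. For $1\le q<\tfrac{np}{n-p}$, Hölder's inequality on the bounded set $Q_1$ reduces to the endpoint case. For $n\le p<\infty$ and any $1\le q<\infty$, I would instead use the elementary bound
\[
\int_{Q_1}\frac{\dd z}{\abs{x-z}^{(n-1)q'}} \leq C(n,q)
\]
valid whenever $(n-1)q'<n$, i.e.\ $q>n/(n-1)$, combined with Hölder in the conjugate range (noting that for $p\ge n$ we have $p'\le n/(n-1)$, so one can always fit the integrability); for small $q$ one simply uses Hölder on $Q_1$ once more.

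\emph{Main obstacle.} The technically delicate point is handling the borderline case $p=n$ and ensuring the constant depends only on $n,p,q$. An alternative, perhaps cleaner, route would be to first prove the $(1,1)$-Poincaré inequality from Step 2 (taking $q=p=1$ in the Riesz bound and integrating directly) and then combine with the Sobolev embedding on $Q_1$; the scaling step is unchanged. Either way, once the unit-cube estimate is in hand, Step 1 delivers the $l(Q)$ factor and completes the proof.
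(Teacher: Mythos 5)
Your overall strategy (scale to the unit cube, represent the oscillation by a Riesz potential, then apply fractional-integration bounds) is a standard and legitimate route to this lemma, and matches in spirit the reference the paper cites (Gilbarg--Trudinger, p.~164); the paper itself offers no proof, only the citation. However, Step 3 has a genuine gap at the endpoint $p=1$, $q=n/(n-1)$, which is included in the statement. The Hardy--Littlewood--Sobolev theorem gives $I_1\colon L^p\to L^{np/(n-p)}$ only for $1<p<n$; at $p=1$ the operator $I_1$ is merely of weak type $(1,n/(n-1))$, and the strong bound fails. So the sentence ``For $1\le p<n$ and $q=\frac{np}{n-p}$ this is exactly the Hardy--Littlewood--Sobolev theorem'' is false for $p=1$. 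The case $p=1$, $1\le q<n/(n-1)$ can be recovered from the weak-type estimate on the bounded set $Q_1$, but the critical case cannot.

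The critical case $p=1$, $q=n/(n-1)$ is precisely the Gagliardo--Nirenberg--Sobolev inequality (in Poincar\'e form), which is \emph{not} a consequence of Riesz-potential bounds; its proof uses a different mechanism, typically the iterated one-dimensional integration argument or a truncation/coarea trick. Your ``alternative route'' at the end (prove the $(1,1)$-Poincar\'e inequality and then invoke Sobolev embedding on $Q_1$) does in principle cover this, but only because it silently appeals to $W^{1,1}(Q_1)\hookrightarrow L^{n/(n-1)}(Q_1)$, which is exactly the missing ingredient and requires its own separate argument beyond what is in your Steps 2--3. To close the gap you should either restrict the HLS step to $1<p<n$, handle $p=1$ via the Gagliardo--Nirenberg argument (or a truncation argument upgrading the weak type to strong type for gradients), and then bootstrap; or else replace Step 3 entirely with the chain $(1,1)$-Poincar\'e $\Rightarrow$ GNS on $Q_1$ $\Rightarrow$ general $(q,p)$ via H\"older and the usual power trick, making the appeal to GNS explicit. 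The $p\ge n$ range, while hand-waved, can be repaired straightforwardly by applying H\"older to drop to some $\tilde p\in(1,n)$ with $n\tilde p/(n-\tilde p)\ge q$ and then invoking HLS, since $Q_1$ has finite measure; that part is sound in outline.
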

\begin{lemma}\label{maxim}
Let $Q_0\subset\mathbb{R}^n$ a cube and $u\in \Lip(Q_0)$. Then for all $x\in Q_0$
\begin{equation*}
M_{Q_0}^{d,\sharp}u(x) \leq C(n)M^d_{1, Q_0}\abs{\grad u(x)}. 
\end{equation*}
\end{lemma}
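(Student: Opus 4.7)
The plan is to fix a point $x \in Q_0$ and an arbitrary dyadic cube $Q \subset Q_0$ containing $x$, then bound the average oscillation $\dashint_Q |u(y) - u_Q|\,dy$ directly by a dyadic fractional average of $|\nabla u|$ over $Q$. Taking the supremum over all such $Q$ then gives the claim.

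The key step is to invoke Lemma \ref{morePoinca} with the endpoint exponents $p = q = 1$, which yields
\begin{equation*}
\dashint_Q |u(y) - u_Q|\,dy \leq C(n)\, l(Q) \dashint_Q |\nabla u(y)|\,dy
\end{equation*}
for every cube $Q$ and every $u \in \Lip(Q)$. (This hypothesis is satisfied because $u \in \Lip(Q_0)$ implies $u \in \Lip(Q)$ for every subcube $Q \subset Q_0$.) Then I rewrite the right-hand side using $l(Q) = |Q|^{1/n}$:
\begin{equation*}
l(Q) \dashint_Q |\nabla u(y)|\,dy = \frac{|Q|^{1/n}}{|Q|} \int_Q |\nabla u(y)|\,dy = \frac{1}{|Q|^{1 - 1/n}} \int_Q |\nabla u(y)|\,dy.
\end{equation*}

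Combining these, for every dyadic $Q \subset Q_0$ with $x \in Q$ we obtain
\begin{equation*}
\dashint_Q |u(y) - u_Q|\,dy \leq C(n) \cdot \frac{1}{|Q|^{1 - 1/n}} \int_Q |\nabla u(y)|\,dy \leq C(n)\, M^d_{1, Q_0}|\nabla u|(x).
\end{equation*}
Taking the supremum over all such dyadic cubes $Q$ on the left-hand side yields the asserted pointwise inequality.

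There is no real obstacle here: the statement is essentially just the endpoint Poincaré inequality on a single cube, rephrased so that the factor $l(Q)$ combines with $|Q|^{-1}$ to produce the fractional normalization $|Q|^{-(1 - 1/n)}$ used in the definition of $M^d_{1, Q_0}$. The constant $C(n)$ is precisely the one from Lemma \ref{morePoinca} in the $(1,1)$ case and does not depend on $Q$, so the supremum passes through cleanly.
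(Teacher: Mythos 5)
Your argument is correct and is essentially the same as the paper's: fix a dyadic $Q \ni x$, apply the $(1,1)$-Poincar\'e inequality from Lemma \ref{morePoinca}, rewrite $l(Q)|Q|^{-1} = |Q|^{-(1-1/n)}$ to match the fractional normalization, and take the supremum over admissible $Q$. Nothing further to add.
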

\begin{proof}
Fix a $x\in Q_0$ and a dyadic cube $Q\subset Q_0$ containing $x$. By the $(1,1)$-Poincaré inequality on cubes (Lemma \ref{morePoinca})
\begin{align*}
\dashint_Q\abs{u(y)-u_Q}\dd y & \leq C(n)l(Q)\dashint_Q\abs{\grad u(x)}\dd x\\
& = C(n)\frac{\abs{Q}^{1/n}}{\abs{Q}}\int_Q\abs{\grad u(x)}\dd x\\
& = C(n)\frac{1}{\abs{Q}^{1-1/n}}\int_Q\abs{\grad u(x)}\dd x\\
& \le C(n)M^d_{1, Q_0}\abs{\grad u(x)}.
\end{align*} 
 The final inequality follows since the choice of $Q$ is arbitrary.
\end{proof}

\section{Sparse domination II}\label{spardo}

The following lemma is the other of the two sparse domination results we need. The idea is from Pérez; see the proof of Theorem 1.1 in \cite{MR1052009}. 
\begin{lemma}\label{inequalemma}Let $0\le \alpha<n$, $Q_0$ be a cube in $\mathbb{R}^n$, and $\sigma\in A_\infty^d(Q_0)$ with constants $C_\sigma>0$, $\delta_\sigma>0$. For every $f\in L^1(Q_0,)$, there is a collection $\mathcal{S}$ of $Q_0$-dyadic cubes satisfying the following conditions: 
\begin{enumerate}[label=\normalfont{(\alph*)}]
\item \label{spars} There is a constant  $\eta=\eta(C_\sigma,\delta_\sigma)>0$  and a collection $\left\{E_Q\mathbin{:} Q\in\mathcal{S}\right\}$ of pairwise disjoint sets such that for every $Q\in\mathcal{S}$, $E_Q$ is a measurable subset of $Q$ with $\sigma(E_Q)\geq \eta\sigma(Q)$. 
\item \label{sparsineq} For almost every  $x\in Q_0$ and every $1\leq p<\infty$, we have 
\begin{equation*}
\left(M^d_{\alpha, Q_0}f(x)\right)^p \leq C\sum_{Q\in\mathcal{S}}\Chi_Q(x)\left(\frac{1}{\abs{Q}^{1-\alpha/n}}\int_Q\abs{f(y)}\dd y\right)^p
\end{equation*}
with $C= C(n, p, C_\sigma,\delta_\sigma) >0$.
\end{enumerate}
\end{lemma}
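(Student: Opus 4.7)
The plan is to adapt Pérez's Calderón--Zygmund-type stopping-time construction to the normalized averages $\frac{1}{\abs{Q}^{1-\alpha/n}}\int_Q\abs{f}\dd y$ in place of ordinary averages. Fix a constant $a>1$ to be chosen sufficiently large in terms of $n,\alpha,C_\sigma,\delta_\sigma$, and for each $k\in\mathbb{Z}$ let $\mathcal{S}_k$ be the family of dyadic subcubes $Q\subset Q_0$ that are maximal with respect to
\[
\frac{1}{\abs{Q}^{1-\alpha/n}}\int_Q\abs{f(y)}\dd y>a^k,
\]
then set $\mathcal{S}=\bigcup_k\mathcal{S}_k$. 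For $Q\in\mathcal{S}_k$ with $Q\subsetneq Q_0$, the dyadic parent fails the defining inequality, which yields the two-sided bound $a^k<\frac{1}{\abs{Q}^{1-\alpha/n}}\int_Q\abs{f}\leq 2^{n-\alpha}a^k$. A short comparison shows that requiring $a>2^{n-\alpha}$ forces each $Q\in\mathcal{S}$ to lie in a unique level $\mathcal{S}_{k_Q}$, so that the sets $E_Q$ defined below are unambiguous.

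For part \ref{spars}, set $E_Q=Q\setminus\bigcup\{Q'\in\mathcal{S}_{k_Q+1}:Q'\subset Q\}$. Pairwise disjointness of $\{E_Q:Q\in\mathcal{S}\}$ should follow from dyadic nestedness: if $x\in E_Q\cap E_R$ with $R\neq Q$, then one cube is contained in the other, and maximality of the levels $\mathcal{S}_k$ forces the smaller cube to live at a strictly higher level, producing some intermediate $Q'\in\mathcal{S}_{k_Q+1}$ (with $Q$ the larger) that sits strictly inside $Q$ and contains the smaller cube, contradicting $x\in E_Q$. The key measure estimate will use the inequality $\abs{Q'}^{1-\alpha/n}\geq\abs{Q'}\abs{Q}^{-\alpha/n}$, valid because $\alpha\geq 0$ and $Q'\subset Q$. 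Combined with the strict lower bound at level $k_Q+1$, this yields $\abs{Q'}\leq a^{-(k_Q+1)}\abs{Q}^{\alpha/n}\int_{Q'}\abs{f}$. Summing over the pairwise disjoint $Q'\in\mathcal{S}_{k_Q+1}$ contained in $Q$ and using $\int_Q\abs{f}\leq 2^{n-\alpha}a^{k_Q}\abs{Q}^{1-\alpha/n}$ produces
\[
\Bigl|\bigcup\{Q'\in\mathcal{S}_{k_Q+1}:Q'\subset Q\}\Bigr|\leq\frac{2^{n-\alpha}}{a}\abs{Q}.
\]
Choosing $a$ large enough that $C_\sigma(2^{n-\alpha}/a)^{\delta_\sigma}<1$ and invoking the $A_\infty^d(Q_0)$ hypothesis on $\sigma$ then gives $\sigma(E_Q)\geq\eta\,\sigma(Q)$ with $\eta=1-C_\sigma(2^{n-\alpha}/a)^{\delta_\sigma}>0$.

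For part \ref{sparsineq}, fix $x\in Q_0$ at which $M^d_{\alpha,Q_0}f(x)$ is finite (an a.e.\ point) and positive. Let $k^*(x)$ be the largest integer $k$ such that some cube of $\mathcal{S}_k$ contains $x$; this is finite because the strict lower bound forces $\abs{Q}\to 0$ as $k\to\infty$. If $M^d_{\alpha,Q_0}f(x)>a^{k^*+1}$ held, then some dyadic cube $Q\ni x$ would satisfy the defining inequality at level $k^*+1$, placing $x$ inside a cube of $\mathcal{S}_{k^*+1}$ and contradicting the maximality of $k^*$. Hence $M^d_{\alpha,Q_0}f(x)\leq a^{k^*+1}$, and denoting by $Q^*(x)$ the unique cube of $\mathcal{S}_{k^*(x)}$ containing $x$, raising to the $p$-th power yields
\[
\bigl(M^d_{\alpha,Q_0}f(x)\bigr)^p\leq a^p\Bigl(\frac{1}{\abs{Q^*(x)}^{1-\alpha/n}}\int_{Q^*(x)}\abs{f}\Bigr)^p\leq a^p\sum_{Q\in\mathcal{S}}\Chi_Q(x)\Bigl(\frac{1}{\abs{Q}^{1-\alpha/n}}\int_Q\abs{f}\Bigr)^p,
\]
which gives \ref{sparsineq} with $C=a^p$ depending only on $n,p,C_\sigma,\delta_\sigma$.

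The main obstacle I expect is the bookkeeping induced by the fractional exponent $1-\alpha/n$: unlike the classical case $\alpha=0$, the disjoint subcube measures cannot be summed directly from the stopping data, and the crucial trick is the exchange $\abs{Q'}^{1-\alpha/n}\geq\abs{Q'}\abs{Q}^{-\alpha/n}$ between descendant and ancestor that renders the disjoint summation usable. A secondary technical point, resolved by taking $a>2^{n-\alpha}$, is ensuring that the levels $\mathcal{S}_k$ are pairwise disjoint so that $k_Q$, $E_Q$, and $k^*(x)$ are unambiguously defined.
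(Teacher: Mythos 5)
Your stopping-time construction is the same Pérez-type argument the paper uses, and your treatment of the fractional exponent via $\abs{Q'}^{1-\alpha/n}\ge\abs{Q'}\abs{Q}^{-\alpha/n}$ matches the paper's estimate. Part~\ref{sparsineq} is fine. However, there is a genuine gap in part~\ref{spars}, concentrated at the top cube $Q_0$.

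Your assertion that ``requiring $a>2^{n-\alpha}$ forces each $Q\in\mathcal{S}$ to lie in a unique level'' fails for $Q_0$: since $Q_0$ has no dyadic parent, the two-sided bound $a^k<\abs{Q}^{-(1-\alpha/n)}\int_Q\abs{f}\le 2^{n-\alpha}a^k$ does not apply to it, and $Q_0$ is the unique maximal cube at \emph{every} level $k$ with $a^k<\abs{Q_0}^{-(1-\alpha/n)}\int_{Q_0}\abs{f}$, i.e.\ $Q_0\in\mathcal{S}_k$ for all $k<k_0$, where $k_0$ is the smallest integer with $\abs{Q_0}^{-(1-\alpha/n)}\int_{Q_0}\abs{f}\le a^{k_0}$. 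Worse, no choice of $k_{Q_0}$ rescues the measure estimate. If you set $k_{Q_0}=k_0-1$ (the largest admissible level) and take $E_{Q_0}=Q_0\setminus\bigcup\{Q'\in\mathcal{S}_{k_0}:Q'\subset Q_0\}$, then your chain of inequalities only gives
\[
\sum_{Q'\in\mathcal{S}_{k_0}}\abs{Q'}\le a^{-k_0}\abs{Q_0}^{\alpha/n}\int_{Q_0}\abs{f}\le\abs{Q_0},
\]
which is trivial; the factor $2^{n-\alpha}/a$ never appears because the step $\int_{Q_0}\abs{f}\le 2^{n-\alpha}a^{k_{Q_0}}\abs{Q_0}^{1-\alpha/n}$ requires a parent. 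With a trivial ratio the $A_\infty^d$ hypothesis gives nothing, so $\sigma(E_{Q_0})\ge\eta\sigma(Q_0)$ is not proved. If instead you assign $Q_0$ the level $k_0$ (so that the exclusion set is $\bigcup\{R\in\mathcal{S}_{k_0+1}:R\subset Q_0\}$, for which the bound $\le a^{-1}\abs{Q_0}$ does hold), then any $Q'\in\mathcal{S}_{k_0}$ with $Q'\subsetneq Q_0$ remains in your $\mathcal{S}$, and for such $Q'$ one has $E_{Q'}\subset E_{Q_0}$ by dyadic nestedness, so the sets $\{E_Q\}$ are no longer pairwise disjoint.

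The paper evades both difficulties by not running the stopping rule at level $k_0$: it sets $\mathcal{S}_{k_0}=\{Q_0\}$ by fiat and defines $\mathcal{S}_k$ via the stopping condition only for $k>k_0$, so that $\mathcal{S}=\{Q_0\}\cup\bigcup_{k>k_0}\mathcal{S}_k$. Then the children of $Q_0$ are exactly the cubes of $\mathcal{S}_{k_0+1}$, the bound $\sum_{R\in\mathcal{S}_{k_0+1}}\abs{R}\le a^{-1}\abs{Q_0}$ follows directly from the defining inequality $\int_{Q_0}\abs{f}\le a^{k_0}\abs{Q_0}^{1-\alpha/n}$, and the offending $\mathcal{S}_{k_0}$-cubes are absent so no overlap occurs. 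Part~\ref{sparsineq} is unharmed by dropping them: the region $\{x:M^d_{\alpha,Q_0}f(x)\le a^{k_0+1}\}$ is covered by $Q_0$ alone (using $a^{k_0+1}<a^2\abs{Q_0}^{-(1-\alpha/n)}\int_{Q_0}\abs{f}$, which holds by the minimality of $k_0$), and for $k>k_0$ the set $\{a^k<M\le a^{k+1}\}$ is covered by $\mathcal{S}_k$. You should replace your $k\in\mathbb{Z}$ indexing with this one-sided indexing starting at $k_0$.
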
 
\begin{proof}For simplicity, we take $f$ to be nonnegative. To begin with, fix a constant $a>2^n$ such that 
\begin{equation*}
C_\sigma\left(\frac{2^n}{a}\right)^{\delta_\sigma}<1. 
\end{equation*}
Analogously with the proof of Lemma \ref{SparseDom}, let us denote $\rho = a\cdot 2^{-n} > 1$. We may assume that
\begin{equation*}
\frac{1}{\abs{Q_0}^{1-\alpha/n}}\int_{Q_0}f(y)\dd y > 0.
\end{equation*} 
If this is not the case, then $f = 0$ almost everywhere in $Q_0$, which implies $M_{\alpha,Q_0}^d f = 0$ everywhere in $Q_0$. Hence there is nothing to estimate and we may choose $\mathcal{S}=\left\{Q_0\right\}$ and $E_{Q_0}=Q_0$. 

Let $k_0$ be the smallest integer satisfying
\begin{equation}\label{faq}
\frac{1}{\abs{Q_0}^{1-\alpha/n}}\int_{Q_0}f(y)\dd y \leq a^{k_0}. 
\end{equation}
For each $k>k_0$, denote
\begin{align*}
S_k & = \{x\in Q_0\mathbin{:} a^k < M^d_{\alpha, Q_0}f(x)\}. 
\end{align*}
Let $\mathcal{S}_{k_0} = \left\{Q_0\right\}$ and, for $k>k_0$, we let $\mathcal{S}_k$ denote the collection of maximal $Q_0$-dyadic cubes $Q\subset Q_0$ satisfying
\begin{equation}\label{stopp}
a^k < \frac{1}{\abs{Q}^{1-\alpha/n}}\int_Qf(y)\dd y. 
\end{equation}
Since $k>k_0$, we find that each maximal cube is strictly contained in $Q_0$. Observe also that $S_k = \cup_{Q\in\mathcal{S}_k}Q$ if $k>k_0$. By the nestedness of the dyadic structure and the construction above, for each $k\geq k_0$ and $R\in\mathcal{S}_{k+1}$, there exists a unique $Q\in\mathcal{S}_k$ such that $R\subset Q$. For each $k\geq k_0$ and $Q\in\mathcal{S}_k$, let
\begin{equation}\label{EKQ}
E_{k,Q} = Q\setminus\bigcup_{R\in\mathcal{S}_{k+1}}R = Q\setminus\bigcup_{\substack{R\in\mathcal{S}_{k+1}\\R\subset Q}} R.
\end{equation}
To verify \ref{spars}, we need to prove that the inequality
\begin{equation}\label{goal}
\sigma(E_{k,Q})\geq (1-C_\sigma\rho^{-\delta_\sigma})\sigma(Q)
\end{equation}
holds for all $Q\in\mathcal{S}_k$ and $k\geq k_0$. Fix $k\ge k_0 $ and $Q\in\mathcal{S}_k$.
By \eqref{EKQ}, the collection $\left\{E_{k,Q}\mathbin{:}k\geq k_0, Q\in\mathcal{S}_k\right\}$ is pairwise disjoint. To begin with, let $k> k_0$. Recall that $\pi Q$ denotes the dyadic parent of $Q$. It follows from the stopping construction that 
\begin{equation*}
\frac{1}{\abs{\pi Q}^{1-\alpha/n}}\int_{\pi Q}f(y)\dd y \leq a^k,
\end{equation*}
because the collection $\mathcal{S}_k$ is maximal with respect to the stopping rule \eqref{stopp}. Using this and the fact that $\alpha\geq 0$, we obtain 
\begin{align}\label{rs}
\nonumber \sum_{\substack{R\in \mathcal{S}_{k+1}\\R\subset Q}}\abs{R} & \leq \sum_{\substack{R\in \mathcal{S}_{k+1}\\R\subset Q}} \frac{\abs{R}^{\alpha/n}}{a^{k+1}}\int_Rf(y)\dd y \leq \frac{\abs{Q}^{\alpha/n}}{a^{k+1}} \sum_{\substack{R\in \mathcal{S}_{k+1}\\R\subset Q}} \int_Rf(y)\dd y\\
\nonumber & \leq \frac{\abs{Q}^{\alpha/n}}{a^{k+1}}\int_Qf(y)\dd y \leq \frac{\abs{Q}}{a^{k+1}}\cdot\frac{\left(2^n\right)^{1-\alpha/n}}{\abs{\pi Q}^{1-\alpha/n}}\int_{\pi Q}f(y)\dd y \\ 
& \leq \frac{2^n}{a}\abs{Q}.
\end{align}

As for the case $k=k_0$, recall that $k_0$ was chosen as the smallest integer such that \eqref{faq} holds, and $\mathcal{S}_{k_0} = \left\{Q_0\right\}$. Again applying the stopping rule \eqref{stopp} in the first inequality below, we have 
\begin{align}\label{rs2}
\nonumber\sum_{\substack{R\in \mathcal{S}_{k_0+1}\\R\subset Q_0}}\abs{R} & \nonumber\leq \sum_{\substack{R\in \mathcal{S}_{k_0+1}\\R\subset Q_0}} \frac{\abs{R}^{\alpha/n}}{a^{k_0+1}}\int_Rf(y)\dd y \leq \frac{\abs{Q_0}^{\alpha/n}}{a^{k_0+1}} \sum_{\substack{R\in \mathcal{S}_{k_0+1}\\R\subset Q_0}} \int_Rf(y)\dd y \\
& \nonumber\leq \frac{\abs{Q_0}^{\alpha/n}}{a^{k_0+1}}\int_{Q_0}f(y)\dd y \leq \frac{\abs{Q_0}}{a}\cdot\frac{1}{a^{k_0}}\cdot\frac{1}{\abs{Q_0}^{1-\alpha/n}}\int_{Q_0}f(y)\dd y\\
& \leq \frac{1}{a}\abs{Q_0}.
\end{align}

Combining the $A_\infty^d(Q_0)$ property of $\sigma$ with the estimates \eqref{rs} and \eqref{rs2}, we obtain
\begin{align*}
\sigma\left(\bigcup_{\substack{R\in\mathcal{S}_{k+1}\\R\subset Q}}R\right)\sigma(Q)^{-1} & 
\leq C_\sigma\left(\Biggl\lvert\bigcup_{\substack{R\in\mathcal{S}_{k+1}\\R\subset Q}}R\;\Biggr\rvert\abs{Q}^{-1}\right)^{\delta_\sigma} \\
& \leq C_\sigma\left(\frac{2^n}{a}\right)^{\delta_\sigma} = C_\sigma\rho^{-\delta_\sigma}. 
\end{align*}
This lets us conclude that the inequality \eqref{goal} holds: 
\begin{equation*}
\sigma(E_{k,Q}) \geq \sigma(Q)-C_\sigma\rho^{-\delta_\sigma}\sigma(Q) = (1-C_\sigma\rho^{-\delta_\sigma})\sigma(Q). 
\end{equation*}
Looking back at the measure estimates \eqref{rs} and \eqref{rs2}, we notice that 
\begin{equation*}
\sum_{\substack{R\in \mathcal{S}_{k+1}\\R\subset Q}}\abs{R} \leq \frac{2^n}{a}\abs{Q} < \abs{Q}.
\end{equation*}
In other words, each $R\in\mathcal{S}_{k+1}$ that is contained in $Q\in\mathcal{S}_k$ is strictly smaller than $Q$ itself. Each $Q\in\mathcal{S}$ belongs to a unique collection $\mathcal{S}_k$, and we may define another collection of cubes $\mathcal{S}=\cup_{k\geq k_0}\mathcal{S}_k$ without including duplicates as well as identify $E_Q=E_{k,Q}$. Then, the condition \ref{spars} holds with $\eta = 1-C_\sigma\rho^{-\delta_\sigma} > 0$. 

It remains to prove \ref{sparsineq}. Consider for $k>k_0$ the sets
\begin{equation*}
D_k = \{x\in Q_0\mathbin{:} a^k < M^d_{\alpha, Q_0}f(x) \leq a^{k+1}\}.
\end{equation*}
We note that \[
Q_0\setminus \bigcup_{k>k_0} D_k = \left\{x\in Q_0\mathbin{:}M^d_{\alpha,Q_0} f(x)=\infty\right\} \bigcup \left\{x\in Q_0\mathbin{:} M^d_{\alpha, Q_0}f(x)\leq a^{k_0+1}\right\}.
\]  
The first set in the right-hand side is of zero measure, since $M^d_{\alpha,Q_0}f(x)\le l(Q_0)^\frac{\alpha}{n} M_{Q_0}^{d,1} f(x)$ for every $x\in Q_0$. Thus, inequality \eqref{weako} in Lemma \ref{strong} shows that  $M^d_{\alpha,Q_0}f(x) <\infty$ for almost every $x\in Q_0$. Furthermore, $k_0$ was chosen as the smallest integer such that \eqref{faq} holds, which means that 
\begin{equation*}
a^{k_0+1} = a^2\cdot a^{k_0-1} < a^2\cdot \frac{1}{\abs{Q_0}^{1-\alpha/n}}\int_{Q_0}f(y)\dd y. 
\end{equation*} 
Also, recall the stopping condition \eqref{stopp}: $\mathcal{S}_{k_0} = \left\{Q_0\right\}$, and for $k>k_0$, $\mathcal{S}_k$ is the collection of maximal $Q_0$-dyadic cubes $Q\subset Q_0$ satisfying
\begin{equation*}
a^k < \frac{1}{\abs{Q}^{1-\alpha/n}}\int_Qf(y)\dd y. 
\end{equation*}
Now fix a  $x\in Q_0\setminus\{x\in Q_0\mathbin{:}M^d_{\alpha,Q_0}f(x)=\infty\}$. With the preceding remarks, we have that 
\begin{align*}
\left(M^d_{\alpha, Q_0}f(x)\right)^p & = \left(M^d_{\alpha, Q_0}f(x)\right)^p\Chi_{Q_0\setminus \cup_k D_k}(x) + \sum_{k>k_0}\left(M^d_{\alpha, Q_0}f(x)\right)^p\Chi_{D_k}(x)\\
& \leq a^{(k_0+1)p}\Chi_{Q_0\setminus \cup_k D_k}(x) + a^p\sum_{k>k_0}a^{kp}\Chi_{S_k}(x)\\
& \leq a^{2p}\left(\frac{1}{\abs{Q_0}^{1-\alpha/n}}\int_{Q_0}f(y)\dd y\right)^p\Chi_{Q_0}(x) \\
& \hspace*{3em} + a^p\sum_{k>k_0}\sum_{Q\in \mathcal{S}_k}\left(\frac{1}{\abs{Q}^{1-\alpha/n}}\int_Qf(y)\dd y\right)^p\Chi_Q(x)\\
& \leq a^{2p}\sum_{k\geq k_0}\sum_{Q\in \mathcal{S}_k}\left(\frac{1}{\abs{Q}^{1-\alpha/n}}\int_Qf(y)\dd y\right)^p\Chi_Q(x)\\
& = a^{2p}\sum_{Q\in\mathcal{S}}\left(\frac{1}{\abs{Q}^{1-\alpha/n}}\int_Qf(y)\dd y\right)^p\Chi_Q(x).
\end{align*}
This concludes the proof of the lemma.
\end{proof}

The following two-weight inequality for the fractional maximal function is a localized variant of a result due to Pérez (\cite{MR1052009}, Theorem 1.1). 
\begin{theorem}\label{aplabel} Let $Q_0\subset\mathbb{R}^n$ be a cube. Furthermore, let $0\leq\alpha<n$, $1<p\leq q<\infty$, and $\left(v,w\right)$ a pair of weights in $Q_0$ such that $\sigma=v^{-1/(p-1)}\in A_\infty^d(Q_0)$. The following conditions are equivalent:
\begin{enumerate}[label=\normalfont{(\alph*)}]
\item\label{AA} There is a  
 $C >0$ 
such that, for all $f\in L^1(Q_0)$,
\begin{equation*}
\left(\int_{Q_0}\left(M^d_{\alpha, Q_0}f(x)\right)^q w(x)\dd x\right)^\frac{1}{q} \leq C\left(\int_{Q_0} \abs{f(x)}^pv(x)\dd x\right)^{\frac{1}{p}}. 
\end{equation*} 
\item\label{BB} There exists a $K>0$ such that, for all dyadic cubes $Q\subset Q_0$,
\begin{equation*}
\left(\frac{1}{\abs{Q}^{1-\alpha/n}}\right)^pw(Q)^{\frac{p}{q}}\sigma(Q)^{p-1}\leq K.
\end{equation*}
\end{enumerate}
In the implication from \ref{BB} to \ref{AA}, the constant $C$ is of the form $C(n,p,C_\sigma,\delta_\sigma,K)$. Here $C_\sigma$ and $\delta_\sigma$ are the $A_\infty^d(Q_0)$ constants for $\sigma$.
\end{theorem}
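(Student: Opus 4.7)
The plan is to prove the equivalence as a P\'erez-style characterization: the necessity (a) $\Rightarrow$ (b) will come from testing against characteristic functions, while the sufficiency (b) $\Rightarrow$ (a) will rest on the sparse decomposition of Lemma \ref{inequalemma}. For (a) $\Rightarrow$ (b) I would fix a dyadic cube $Q\subset Q_0$ and substitute $f=\sigma\Chi_Q$ into (a). Since $\sigma=v^{-1/(p-1)}$, the identity $\sigma^pv=\sigma$ gives $\int_{Q_0}\abs{f}^pv\dd x=\sigma(Q)$, while for every $x\in Q$ the dyadic fractional maximal function satisfies $M^d_{\alpha,Q_0}f(x)\geq \sigma(Q)/\abs{Q}^{1-\alpha/n}$. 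Restricting the left-hand side of (a) to $Q$, substituting these bounds, and raising to the $p$th power yields (b) with $K=C^p$.

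For (b) $\Rightarrow$ (a), Lemma \ref{inequalemma} supplies a $\sigma$-sparse collection $\mathcal{S}$ with $(M^d_{\alpha,Q_0}f(x))^p\leq C\sum_{Q\in\mathcal{S}}\Chi_Q(x)a_Q$, where $a_Q=\bigl(\abs{Q}^{\alpha/n-1}\int_Q\abs{f}\dd y\bigr)^p$. An application of H\"older's inequality with exponent $p'$, exploiting $v^{-p'/p}=\sigma$, gives $\int_Q\abs{f}\dd y\leq\sigma(Q)^{1/p'}\bigl(\int_Q\abs{f}^pv\dd y\bigr)^{1/p}$; combined with (b), this produces the key estimate
\[
a_Q\leq \frac{\sigma(Q)^{p-1}}{\abs{Q}^{(1-\alpha/n)p}}\int_Q\abs{f}^pv\dd y\leq K\,w(Q)^{-p/q}\int_Q\abs{f}^pv\dd y.
\]
Writing $h:=\abs{f}/\sigma$, so that $\abs{f}^pv=h^p\sigma$ and $a_Q=(\sigma(Q)/\abs{Q}^{1-\alpha/n})^p(h_{\sigma;Q})^p$, the diagonal case $p=q$ closes directly. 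Integrating against $w$ and invoking (b) gives $\int_{Q_0}(M^d_{\alpha,Q_0}f)^pw\dd x\leq CK\sum_{Q\in\mathcal{S}}\sigma(Q)(h_{\sigma;Q})^p$, and the $\sigma$-sparseness $\sigma(Q)\leq\eta^{-1}\sigma(E_Q)$ together with the pointwise bound $(h_{\sigma;Q})^p\leq (M^{d,\sigma}_{Q_0}h(x))^p$ on $E_Q$ and the disjointness of $\{E_Q\}$ turn this sum into $C\int_{Q_0}(M^{d,\sigma}_{Q_0}h)^p\sigma\dd x$; since $p>1$, Lemma \ref{strong} applied in $L^p(Q_0;\sigma\dd x)$ bounds the latter by $\norm{f}^p_{L^p(Q_0;v\dd x)}$.

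For the off-diagonal case $p<q$, I would dualize the outer $L^{q/p}(Q_0;w\dd x)$ norm with conjugate exponent $(q/p)'=q/(q-p)$, reducing to estimating $\sum_{Q\in\mathcal{S}}a_Q\int_Q gw\dd x$ for nonnegative $g$ with $\norm{g}_{L^{(q/p)'}(Q_0;w\dd x)}=1$. The extra factor $\int_Q gw\dd x$ would be controlled by writing $\int_Q gw\dd x=\int_Q(gw/\sigma)\sigma\dd x\leq \sigma(Q)M^{d,\sigma}_{Q_0}(gw/\sigma)(x)$ for $x\in E_Q$; combining with the earlier pointwise bound on $(h_{\sigma;Q})^p$, using sparseness and disjointness of $\{E_Q\}$, and applying H\"older in $L^p(Q_0;\sigma\dd x)\times L^{p'}(Q_0;\sigma\dd x)$ together with Lemma \ref{strong} to both weighted maximal functions would close the estimate. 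The main obstacle I anticipate is the bookkeeping in this off-diagonal case: the exponents on $\sigma(Q)$, $w(Q)$ and $\abs{Q}$ arising from (b) and from the H\"older applications to $f$ and $g$ must balance precisely so that both uses of Lemma \ref{strong} take place at admissible exponents $s>1$.
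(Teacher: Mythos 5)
Your necessity direction (a) $\Rightarrow$ (b) is correct and coincides with the paper's: test against $f=\sigma\Chi_Q$, use $\sigma^p v=\sigma$ and the lower bound $M^d_{\alpha,Q_0}f(x)\geq\sigma(Q)/\abs{Q}^{1-\alpha/n}$ on $Q$, and raise to the power $p/q$. Your diagonal case $p=q$ of (b) $\Rightarrow$ (a) is also correct and matches the paper's computation: integrate the sparse pointwise bound against $w$, absorb the $A^d_\infty$-style factor via (b), and transfer the resulting $\sigma$-weighted sum onto the disjoint sets $E_Q$ to obtain $\int_{Q_0}(M^{d,\sigma}_{Q_0}h)^p\sigma\dd x$, which is bounded by Lemma~\ref{strong}.

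The genuine gap is in your proposed treatment of the off-diagonal case $p<q$. After you apply (b) to $a_Q=\abs{Q}^{-(1-\alpha/n)p}\sigma(Q)^p(h_{\sigma;Q})^p$, the surviving factor is $Kw(Q)^{-p/q}\sigma(Q)(h_{\sigma;Q})^p$. If you then control $\int_Qgw\dd x$ by $\sigma(Q)M^{d,\sigma}_{Q_0}(gw/\sigma)(x)$ for $x\in E_Q$, you accumulate an extra $\sigma(Q)$, and after a single use of sparseness and averaging over $E_Q$ you are left with a dangling factor $w(Q)^{-p/q}\sigma(Q)$ that nothing in hypothesis (b) cancels. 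Moreover, to apply Lemma~\ref{strong} to $M^{d,\sigma}_{Q_0}(gw/\sigma)$ in $L^{p'}(Q_0;\sigma\dd x)$ you would need a bound on $\norm{gw/\sigma}_{L^{p'}(Q_0;\sigma\dd x)}$; but the only information available on $g$ is $\norm{g}_{L^{(q/p)'}(Q_0;w\dd x)}=1$, and the exponent $(q/p)'=q/(q-p)$ and the weight $w$ do not match $p'$ and $\sigma$. Your anticipation that ``the exponents must balance precisely'' was well founded: they do not.

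The paper avoids this entirely by not dualizing the outer $L^{q/p}(Q_0;w\dd x)$ norm at all. Since $q/p\geq 1$, one applies the triangle inequality in $L^{q/p}(Q_0;w\dd x)$ to the sparse sum,
\[
\norm{\sum_{Q\in\mathcal{S}}\Chi_Q\,a_Q}_{L^{q/p}(Q_0;w\dd x)}\leq\sum_{Q\in\mathcal{S}}a_Q\,\norm{\Chi_Q}_{L^{q/p}(Q_0;w\dd x)}=\sum_{Q\in\mathcal{S}}a_Q\,w(Q)^{p/q},
\]
after which (b) cancels exactly: $a_Q w(Q)^{p/q}\leq K\sigma(Q)(h_{\sigma;Q})^p$, and your diagonal argument carries over verbatim with no reference to any dual function $g$. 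Equivalently, if you insist on dualizing, the correct estimate on $g$ is the one-weight H\"older inequality $\int_Q gw\dd x\leq w(Q)^{p/q}\norm{g}_{L^{(q/p)'}(Q_0;w\dd x)}=w(Q)^{p/q}$, which again produces $\sum_{Q\in\mathcal{S}}a_Q w(Q)^{p/q}$ and reduces to the same expression; the passage through $M^{d,\sigma}_{Q_0}(gw/\sigma)$ is both unnecessary and unworkable.
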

\begin{proof}
First we show that \ref{AA} implies \ref{BB}. Fix a dyadic cube $Q\subset Q_0$ and let $f = v^{-1/(p-1)}\Chi_Q$. 
As per the definition of the dyadic fractional maximal function, we clearly have
\begin{equation*}
\frac{1}{\abs{Q}^{1-\alpha/n}}\int_Q v(y)^{-\frac{1}{p-1}}\dd y \leq M^d_{\alpha, Q_0}f(x) \hspace*{2em}\text{ for every }x\in Q.
\end{equation*}
Now 
\begin{align}\label{okjpg}
\nonumber & {\abs{Q}}^{-(1-\alpha/n)q}\int_Q w(x)\dd x\; \left(\int_Q v(y)^{-\frac{1}{p-1}}\dd y\right)^q =\int_Q w(x)\dd x\;\left(\frac{1}{\abs{Q}^{1-\alpha/n}}\int_Q v(y)^{-\frac{1}{p-1}}\dd y\right)^q\\
& \hspace*{3em} \leq \int_{Q_0}\left(M^d_{\alpha, Q_0}f(x)\right)^qw(x)\dd x \leq C^q\left(\int_{Q_0}\abs{f(x)}^pv(x) \dd x\right)^\frac{q}{p}\\
\nonumber & \hspace*{3em} = C^q\left(\int_Q v(x)^{-\frac{1}{p-1}} \dd x\right)^\frac{q}{p},
\end{align}
where the second inequality on line \eqref{okjpg} follows from \ref{AA}. From here, we conclude that 
\begin{equation*}
\abs{Q}^{-(1-\alpha/n)p}\left(\int_Q w(x)\dd x\right)^\frac{p}{q}\left(\int_Q v(y)^{-\frac{1}{p-1}}\dd y\right)^{p-1}\leq C^p,
\end{equation*}
 that is,  $\abs{Q}^{-(1-\alpha/n)p}w(Q)^{p/q} \sigma(Q)^{p-1} \leq C^p$.

Next we show that \ref{BB} implies \ref{AA}. For a fixed $f\in L^1(Q_0)$ and $\sigma\in A_\infty^d(Q_0)$, let $\mathcal{S}$ be the associated collection of cubes given by the sparse domination lemma \ref{inequalemma}. Then, we have 
\begin{align}
\nonumber & \left(\int_{Q_0} \left(M^d_{\alpha,Q_0}f(x)\right)^q w(x)\dd x\right)^\frac{p}{q} = \norm{\left(M^d_{\alpha,Q_0}f\right)^p}_{L^{q/p}(Q_0;w\dd x)}\\
\label{line} &\hspace{3em} \leq C(n,p,C_\sigma,\delta_\sigma)\norm{\sum_{Q\in\mathcal{S}}\Chi_Q\left(\frac{1}{\abs{Q}^{1-\alpha/n}}\int_Q\abs{f(y)}\dd y\right)^p}_{L^{q/p}(Q_0;w\dd x)}\\
\nonumber &\hspace{3em} \leq C\sum_{Q\in\mathcal{S}}\norm{\Chi_Q\left(\frac{1}{\abs{Q}^{1-\alpha/n}}\int_Q\abs{f(y)}\dd y\right)^p}_{L^{q/p}(Q_0;w\dd x)}\\
\nonumber &\hspace{3em} = C\sum_{Q\in\mathcal{S}}\left(\frac{1}{\abs{Q}^{1-\alpha/n}}\int_Q\abs{f(y)}\dd y\right)^pw(Q)^\frac{p}{q},
\end{align}
where \eqref{line} follows from Lemma \ref{inequalemma}. On the other hand, by \ref{BB} we have that
\begin{align*}
& \sum_{Q\in\mathcal{S}}\left(\frac{1}{\abs{Q}^{1-\alpha/n}}\int_Q\abs{f(y)}\dd y\right)^pw(Q)^\frac{p}{q}\\
& \hspace{3em} = \sum_{Q\in\mathcal{S}}\left(\frac{1}{\abs{Q}^{1-\alpha/n}}\right)^pw(Q)^\frac{p}{q}\sigma(Q)^{p-1}\left(\frac{1}{\sigma(Q)}\int_Q\abs{f(y)}\sigma(y)^{-1}\sigma(y)\dd y\right)^p\sigma(Q)\\
& \hspace{3em} \leq K\sum_{Q\in\mathcal{S}}\left(\frac{1}{\sigma(Q)}\int_Q\abs{f(y)}\sigma(y)^{-1}\sigma(y)\dd y\right)^p\sigma(Q). 
\end{align*}
Recall from Lemma \ref{inequalemma} \ref{spars} that for $Q\in\mathcal{S}$, we have $\sigma(E_Q)\ge\eta\sigma(Q)$. Furthermore, Lemma \ref{inequalemma} states that $\left\{E_Q\right\}_{Q\in\mathcal{S}}$ is a collection of pairwise disjoint sets. Hence we may continue estimating
\begin{align}
\nonumber & K\sum_{Q\in\mathcal{S}}\left(\frac{1}{\sigma(Q)}\int_Q\abs{f(y)}\sigma(y)^{-1}\sigma(y)\dd y\right)^p\sigma(Q)\\
\nonumber & \hspace{3em} \leq \frac{K}{\eta}\sum_{Q\in\mathcal{S}}\left(\frac{1}{\sigma(Q)}\int_Q\abs{f(y)}\sigma(y)^{-1}\sigma(y)\dd y\right)^p\sigma(E_Q)\\
\nonumber & \hspace{3em} = \frac{K}{\eta}\sum_{Q\in\mathcal{S}}\int_{E_Q}\left(M^{d,\sigma}_{Q_0}\left(f\sigma^{-1}\right)(x)\right)^p\sigma(x)\dd x\\
\label{kert9} & \hspace{3em} \leq \frac{K}{\eta}\int_{Q_0}\left(M^{d,\sigma}_{Q_0}\left(f\sigma^{-1}\right)(x)\right)^p\sigma(x)\dd x\\
\label{kert99} & \hspace{3em} \leq C(p)\frac{K}{\eta}\int_{Q_0}\abs{f(x)\sigma(x)^{-1}}^p\sigma(x)\dd x. 
\end{align}
In \eqref{kert9}, we applied disjointness, while \eqref{kert99} follows from the boundedness of $M^{d, \sigma}_{Q_0}$ (Lemma \ref{strong}). This concludes the proof, since $v(x)=\sigma(x)^{-(p-1)}$ for every $x\in Q_0$. 
\end{proof}
\begin{remark}
The assumption $\sigma=v^{-\frac{1}{p-1}}\in A_\infty^d(Q_0)$ is needed in the implication from \ref{BB} to  \ref{AA} in Theorem \ref{aplabel}. In \cite{MR3606547} certain testing conditions are used in the case $\alpha=0$
to characterize a closely related boundedness result under weaker assumptions on the weight $\sigma$. Moreover, the special case $\alpha=0$ of Theorem~\ref{aplabel} is closely related to Theorem 1.15 in \cite{MR3606547}. 
\end{remark}

Our main local result is the following two-weight Poincaré inequality in $Q_0$, provided that the weights involved satisfy suitable $A_\infty$ conditions and the dyadic compatibility condition \eqref{koo0}. The theorem echoes an earlier result by Chua \cite{MR1140667}, while being more strictly localized. 
\begin{theorem}\label{lokaali} Let $Q_0$ be a cube in $\mathbb{R}^n$. Furthermore, let $1<p\leq q<\infty$, $(v, w)$ a pair of weights in $Q_0$ such that $w\in A_\infty^d(Q_0)$, and $\sigma = v^{-1/(p-1)}\in A_\infty^d(Q_0)$. Suppose that there exists a constant $K>0$ such that 
\begin{equation}\label{koo0}
\left(\frac{1}{\abs{Q}^{1-1/n}}\right)^pw(Q)^{\frac{p}{q}}\sigma(Q)^{p-1} \leq K 
\end{equation}
for all $Q_0$-dyadic cubes $Q\subset Q_0$. Then, the inequality 
\begin{equation*}
\left(\int_{Q_0}\abs{u(x)-u_{Q_0}}^qw(x)\dd x\right)^\frac{1}{q} \leq C\left(\int_{Q_0} \abs{\grad u(x)}^pv(x)\dd x\right)^{\frac{1}{p}}
\end{equation*}
holds for every $u\in\Lip(Q_0)$ with 
\begin{equation*}
C = C(n, p, q, K, C_w, C_\sigma, \delta_w, \delta_\sigma) > 0, 
\end{equation*}
where $C_w, \delta_w,$ and $C_\sigma, \delta_\sigma$ are the $A_\infty^d(Q_0)$ constants for $w$ and $\sigma$, respectively.
\end{theorem}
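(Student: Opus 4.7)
The plan is to chain together the three principal tools developed earlier: the local Fefferman--Stein inequality (Theorem \ref{maxima}), the pointwise bound on the dyadic sharp maximal function by the dyadic fractional maximal function of the gradient (Lemma \ref{maxim}), and the two-weight inequality for the fractional maximal function (Theorem \ref{aplabel}). No new estimate is needed; the work consists in verifying that the hypotheses line up correctly.

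First I would apply Theorem \ref{maxima} with the exponent $q$ in place of $p$---this is legal since $1<p\leq q<\infty$ and Theorem \ref{maxima} is valid for all exponents in $(1,\infty)$---and the weight $w\in A_\infty^d(Q_0)$ assumed in the present statement. Taking $f=u$ (which lies in $L^1(Q_0)$ since $u\in\Lip(Q_0)$), this yields
\begin{equation*}
\int_{Q_0}\abs{u(x)-u_{Q_0}}^q w(x)\dd x \leq C\int_{Q_0}\bigl(M^{d,\sharp}_{Q_0}u(x)\bigr)^q w(x)\dd x,
\end{equation*}
where $C$ depends on $n,q,C_w,\delta_w$. Next, I would invoke Lemma \ref{maxim} pointwise, which gives $M^{d,\sharp}_{Q_0}u(x)\leq C(n)M^d_{1,Q_0}\abs{\grad u}(x)$ for every $x\in Q_0$. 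Substituting this bound into the right-hand side of the previous inequality, we obtain
\begin{equation*}
\left(\int_{Q_0}\abs{u(x)-u_{Q_0}}^q w(x)\dd x\right)^{1/q} \leq C\left(\int_{Q_0}\bigl(M^d_{1,Q_0}\abs{\grad u}(x)\bigr)^q w(x)\dd x\right)^{1/q}.
\end{equation*}

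Finally I would apply Theorem \ref{aplabel} with $\alpha=1$ (valid for $n\geq 2$; the case $n=1$ is immediate from the classical one-dimensional Poincaré inequality) and test function $f=\abs{\grad u}$. The pair $(v,w)$ satisfies the hypothesis $\sigma=v^{-1/(p-1)}\in A_\infty^d(Q_0)$, and the compatibility condition \eqref{koo0} of the present theorem is precisely condition \ref{BB} of Theorem \ref{aplabel} with $\alpha=1$. Therefore the implication \ref{BB}$\Rightarrow$\ref{AA} of Theorem \ref{aplabel} provides
\begin{equation*}
\left(\int_{Q_0}\bigl(M^d_{1,Q_0}\abs{\grad u}(x)\bigr)^q w(x)\dd x\right)^{1/q} \leq C\left(\int_{Q_0}\abs{\grad u(x)}^p v(x)\dd x\right)^{1/p},
\end{equation*}
with constant depending on $n,p,C_\sigma,\delta_\sigma,K$. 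Combining the two displayed inequalities yields the desired two-weight Poincaré estimate with a constant of the claimed form.

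The main obstacle is not really in this final assembly but in the careful bookkeeping of which $A_\infty^d$ hypothesis gets used at which step: the weight $w$ enters through the Fefferman--Stein step and must belong to $A_\infty^d(Q_0)$, while the dual weight $\sigma$ enters through the sparse domination of the fractional maximal function (Lemma \ref{inequalemma}) used in Theorem \ref{aplabel}, and it is $\sigma$ rather than $v$ that must lie in $A_\infty^d(Q_0)$. The fact that we apply Theorem \ref{maxima} with exponent $q$ (not $p$) is what makes the exponent $p/q$ on $w(Q)$ in \eqref{koo0} correspond to condition \ref{BB} of Theorem \ref{aplabel}; one should verify this matching explicitly.
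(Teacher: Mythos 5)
Your proposal is correct and follows exactly the same three-step chain as the paper's own proof: Theorem \ref{maxima} with exponent $q$, then Lemma \ref{maxim} pointwise, then Theorem \ref{aplabel} with $\alpha=1$ applied to $f=\abs{\grad u}$. The bookkeeping you flag (using $q$ in the Fefferman--Stein step so that \eqref{koo0} matches condition \ref{BB} of Theorem \ref{aplabel}, and tracking that $w$ and $\sigma$ each carry their own $A_\infty^d$ hypothesis) is precisely what the paper implicitly relies on, and your aside about $n=1$ is a reasonable caveat that the paper does not spell out.
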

\begin{proof}
Let $u\in\Lip(Q_0)$ and $\left(v,w\right)$ as assumed. We first apply Theorem \ref{maxima} and Lemma \ref{maxim}, then  Theorem \ref{aplabel}:
\begin{align*}
\left(\int_{Q_0}\abs{u(x)-u_{Q_0}}^qw(x)\dd x\right)^\frac{1}{q} & \leq C\left(\int_{Q_0} \left(M^{d,\sharp}_{Q_0}u(x)\right)^q w(x)\dd x\right)^{\frac{1}{q}}\\
&\leq C\left(\int_{Q_0}\left(M^d_{1,Q_0}\abs{\grad u(x)}\right)^qw(x)\dd x\right)^\frac{1}{q}\\
& \leq C\left(\int_{Q_0}\abs{\grad u(x)}^pv(x)\dd x\right)^\frac{1}{p}. 
\end{align*}
\end{proof}

\section{From local to global}

We proceed to provide conditions for domains $\Omega$ such that there is a constant $C=C(n,p,\Omega)$ satisfying 
\begin{equation}\label{e.lg_demo}
\inf_{c\in\mathbb{R}} \int_\Omega \lvert f(x)-c \rvert^p\dd x
\le C \sum_{Q\in\mathcal{W}(\Omega)}
\int_{Q*} \lvert f(x)-f_{Q^*} \rvert^p\dd x
\end{equation}
for every $f\in L^1_{\loc}(\Omega)$. Theorem~\ref {gt:weak} provides a weighted variant of this local-to-global inequality under the assumption that $\Omega$ is a \emph{Boman domain}. This class of domains was introduced by Boman \cite{Bom}. It is known that a Euclidean domain $\Omega$ is a Boman domain if and only if it is a John domain \cite{MR1427074}. John domains are a more general class than Lipschitz domains, since they can have twisting cones. These classes of domains have been used extensively, for instance, in connection with Poincar\'e inequalities. Their relevance is covered in \cite{MR1427074}.

The inequality \eqref{e.lg_demo} provides a mechanism to bootstrap inequalities starting from  corresponding inequalities on cubes inside the domain. The proof is based on a chaining argument, and we adapt the rather well known argument developed in \cite{ino}; see also \cite{MR1140667}. For this purpose, we need to define chains.

\begin{dfn}\label{gt:chaindef} Let $\Omega$ be a bounded domain in $\mathbb{R}^n$ and consider Whitney cubes $Q\in\mathcal{W}(\Omega)$. We say that
\begin{equation*}
\mathcal{C}(Q) = \left(Q_0, \ldots, Q_k\right)   \subset \mathcal{W}(\Omega)
\end{equation*}
is a \emph{chain} in $\Omega$ joining $Q_0$ to $Q = Q_k$, if $Q_i\neq Q_j$ whenever $i\neq j$, and for each $j\in\left\{1,\ldots, k\right\}$ there exists a cube $R\subset Q_j^*\cap Q^*_{j-1}$ for which
\begin{equation*}
l(R) \geq C(n)\max\left\{l(Q_j^*), l(Q^*_{j-1})\right\}.
\end{equation*}
\end{dfn}
The collection $\left\{\mathcal{C}(Q)\mathbin{:} Q\in\mathcal{W}(\Omega)\right\}$ is called a \emph{chain decomposition} of $\Omega$, with a fixed Whitney cube $Q_0$ as the common starting point for all chains. The \emph{shadow} of a Whitney cube $R\in\mathcal{W}(\Omega)$ is the set $\mathcal{S}(R)=\left\{Q\in\mathcal{W}(\Omega)\mathbin{:} R\in\mathcal{C}(Q)\right\}$. Worth noticing is the duality of the concepts of chain and shadow: $R\in\mathcal{C}(Q)$ if and only if $Q\in\mathcal{S}(R)$.

We will assume throughout that $\Omega$ is a Boman domain, which means that it satisfies the following chain condition.
\begin{dfn} A domain $\Omega\subset\mathbb{R}^n$ is said to satisfy the \emph{Boman chain condition} with constant $N\geq 1$ if there exists a chain decomposition of $\Omega$ such that  for all $R=Q(x_R,r_R)\in\mathcal{W}(\Omega)$
\begin{equation}\label{gt:bchain3} 
\bigcup_{Q\in\mathcal{S}(R)}Q\subset NR=Q(x_R,Nr_R).
\end{equation}
\end{dfn}
Open cubes, balls, and bounded Lipschitz domains are Boman domains in $\mathbb{R}^n$. More generally, so-called bounded John domains are examples of Boman domains; we refer to \cite{MR1427074} for details.

Finally, we require the weight $w$ to be doubling in the following sense. 
\begin{dfn} A weight  $w$ in an open set $\Omega$ is called \emph{doubling} in $\Omega$ with constant $D$ if there exists a constant $D = D(n,w)$ such that whenever $Q=Q(x_Q,r_Q)\subset\mathbb{R}^n$ is any cube with its midpoint $x_Q$ in $\Omega$  we have
\begin{equation*}
w\left(\Omega\cap Q(x_Q, 2r_Q)\right)\leq Dw\left(\Omega\cap Q(x_Q, r_Q)\right).
\end{equation*}
\end{dfn}

Next we show that doubling weights in $\mathbb{R}^n$ are doubling weights on Boman domains as well. 
%
\begin{lemma}\label{l.doubling}
Let $w$ be a doubling weight in $\mathbb{R}^n$ with constant $D_1$, and $\Omega$ a Boman domain with constant $N\ge 1$. Then, $w$ is doubling in $\Omega$ with constant $D(n,N,\Omega,Q_0,D_1)$.
\end{lemma}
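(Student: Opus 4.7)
The plan is to reduce doubling in $\Omega$ to $\mathbb{R}^n$-doubling of $w$. Write $Q = Q(x_Q, r_Q)$ with $x_Q \in \Omega$. The upper bound $w(\Omega \cap 2Q) \le w(2Q) \le D_1 w(Q)$ is immediate from $\mathbb{R}^n$-doubling, so the task reduces to producing a reverse inequality $w(Q) \le C \cdot w(\Omega \cap Q)$ with $C = C(n, N, \Omega, Q_0, D_1)$; the doubling constant in $\Omega$ is then $D = D_1 \cdot C$. The case $r_Q > \diam(\Omega)$ is trivial, as $\Omega \cap Q = \Omega = \Omega \cap 2Q$ there, so I may assume $r_Q \le \diam(\Omega)$.

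The heart of the argument is to locate a cube $T \subset \Omega \cap Q$ with $l(T) \ge c \cdot r_Q$ for some $c = c(n, N, \Omega, Q_0) > 0$. With such $T$ in hand, $\mathbb{R}^n$-doubling centered at $x_T$ delivers the reverse inequality: since $T \subset Q$ we have $Q \subset Q(x_T, 2 r_Q)$, and iterating doubling at most $k = \lceil \log_2(4/c) \rceil$ times gives $w(Q) \le D_1^k w(T) \le D_1^k w(\Omega \cap Q)$.

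To build $T$, let $P \in \mathcal{W}(\Omega)$ be the Whitney cube containing $x_Q$; by \eqref{sideledil} there is a constant $c_0 = c_0(n) > 0$ with $Q(x_Q, c_0 l(P)) \subset \Omega$. If $r_Q \le (N+1) l(P)/2$, set $T := Q(x_Q, \min\{r_Q, c_0 l(P)\})$, which lies in $\Omega \cap Q$ and satisfies $l(T) \ge 4 c_0 r_Q/(N+1)$. If $r_Q > (N+1) l(P)/2$, invoke the Boman chain $\mathcal{C}(P) = (Q_0, \ldots, Q_k = P)$. For every chain cube $Q_j$, the shadow condition \eqref{gt:bchain3} applied with $P \in \mathcal{S}(Q_j)$ yields $x_Q \in P \subset N Q_j$, hence $Q_j \subset Q(x_Q, (N+1) l(Q_j)/2)$. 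Choose $j$ minimal with $l(Q_j) \le 2 r_Q/(N+1)$; such $j$ exists because $l(Q_k) = l(P) < 2 r_Q/(N+1)$. If $j = 0$, take $T := Q_0$, noting $l(T)/r_Q \ge l(Q_0)/\diam(\Omega)$. If $j > 0$, the chain definition supplies a cube $R \subset Q_j^* \cap Q_{j-1}^*$ with $l(R) \ge C(n) \max\{l(Q_j^*), l(Q_{j-1}^*)\}$; combined with $l(R) \le l(Q_j^*)$, this forces $l(Q_j) \ge C(n) l(Q_{j-1}) > 2 C(n) r_Q/(N+1)$, and $T := Q_j$ works.

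The main technical obstacle is the bookkeeping in the chain case: verifying that the cube $Q_j$ selected by the stopping rule actually sits inside $Q$ (immediate from the shadow condition) and extracting a uniform lower bound on $l(Q_j)$ from the comparability of consecutive chain-cube sides. Once these are in place, $c$ depends only on $n, N, l(Q_0)$, and $\diam(\Omega)$, and the resulting constant $D = D_1^{k+1}$ has the advertised dependence.
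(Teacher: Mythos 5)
Your proof is correct and follows essentially the same strategy as the paper: reduce to the case $l(Q)\le 2\diam(\Omega)$, find a comparably sized cube inside $Q\cap\Omega$ by looking at the Whitney cube $P\ni x_Q$ (either $P$ is already large and you shrink around $x_Q$, or you walk up the Boman chain $\mathcal{C}(P)$ to the first cube of the right scale, whose containment in $Q$ follows from the shadow condition), and then iterate the $\mathbb{R}^n$-doubling of $w$ a bounded number of times. The only cosmetic difference is that you work in the max-norm throughout, so your case threshold is $l(P)\lessgtr l(Q)/(N+1)$ rather than the paper's $\rho l(Q)$ with $\rho(1+N)\sqrt{n}<1/2$; both are valid.
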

\begin{proof}
Fix a  cube $Q=Q(x_Q,r_Q)$ with its midpoint $x_Q$ in $\Omega$. If $l(Q)> 2\diam(\Omega)$, then 
\[w(\Omega\cap 2Q)=w(\Omega)=w(\Omega\cap Q).\]
Hence, in the following we may assume that $l(Q)\le 2\diam(\Omega)$. It suffices to prove that there is a constant $\lambda=  \lambda(n,N,Q_0,\Omega)$ and another cube $R\subset Q\cap \Omega$ such that $l(Q)\leq \lambda l(R)$. Here $Q_0$ is the fixed cube in the chain decomposition of $\Omega$. Indeed, using this  and the global doubling property of $w$, we estimate
\begin{align*}
w(\Omega\cap 2Q) \leq w(2Q) \leq D_1w(Q) \leq C(D_1,\lambda)w(R) \leq C(D_1,\lambda) w(\Omega\cap Q).
\end{align*}

It now suffices to prove that the cube $R$ exists. Let $\rho=\rho(N,n)$ be such that $\rho (1+N)\sqrt n<\frac{1}{2}$. Fix a Whitney cube $P\in\mathcal{W}(\Omega)$ such that $x_Q\in P$. There are two cases to consider: either $l(P)>\rho l(Q)$ or not. In the first case, we take $R=Q(x_Q,\min\{r_Q,d(x_Q,\partial \Omega)/(2\sqrt n)\})\subset Q\cap \Omega$. Observe that $x_Q\in P\in\mathcal{W}(\Omega)$, and thus 
\[
d(x_Q,\partial \Omega)/(2\sqrt n)\ge d(P,\partial\Omega)/(2\sqrt n)\ge  l(P)/2> \rho l(Q)/2.
\]
Therefore $l(R)\ge C(\rho)l(Q)$.

Next assume that $l(P)\le \rho l(Q)$. Consider the chain $\mathcal{C}=(Q_0,\ldots,Q_k)$, where $Q_k=P$. Denote by $i_0$ the smallest index $i\in \{0,\ldots,k\}$ for which $l(Q_i)\le \rho l(Q)$ and denote $R=Q_{i_0}$. If $i_0=0$, then 
\[l(Q)\le 2\diam(\Omega)\le C(\Omega,Q_0)\diam(Q_0)=C(n,\Omega,Q_0)l(R).\]
On the other hand, if $i_0>0$, then
\[
l(Q)<\rho^{-1}l(Q_{{i_0}-1})\le C(n,\rho)l(Q_{i_0})=C(n,\rho)l(R).
\]
Here we also used the fact that adjacent cubes in the chain have comparable side lengths. Furthermore, we claim that $R\subset Q\cap \Omega$. Recall that $P\in\mathcal{S}(R)$ and thus $x_Q\in P\subset NR$ by the Boman chain condition \eqref{gt:bchain3}. Fix a $x\in R$. Then 
\begin{align*}
\abs{x-x_Q}&\le \abs{x-x_R}+\abs{x_R-x_Q}\le 
\diam(R)+\diam(NR)\\&=(1+N)\sqrt n l(R)\leq \rho (1+N)\sqrt n l(Q)<l(Q)/2.
\end{align*}
Hence $\abs{x-x_Q}<l(Q)/2$ and thus $x\in Q$. Since $R$ is a Whitney cube, it follows that $R\subset Q\cap \Omega$, as claimed.
\end{proof}

\begin{dfn}\label{gt:mutilde} Let $\Omega$ be an open set and let $w$ be a doubling weight in $\Omega$. We define the non-centered maximal function for $f\in L^1(\Omega; w\dd x)$ by
\begin{equation*}
\widetilde{M}^w f(x) = \sup_{Q\ni x}\frac{1}{w(\Omega\cap Q)}\int_{\Omega\cap Q}\abs{f(y)}w(y)\dd y,
\end{equation*}
where $x\in\Omega$ and the supremum is taken over cubes $Q\subset\mathbb{R}^n$ such that $x_Q\in\Omega$ and $Q\ni x$. 
\end{dfn}
We will make use of the fact that the maximal function $\widetilde{M}^w$ is bounded on $L^p(\Omega; w\dd x)$. For one instance of the proof, see Theorem 3.13 in \cite{MR2867756}. 
\begin{lemma}\label{gt:bddlemma}
Let $1<p<\infty$, $w$ a doubling weight in an open set $\Omega$, and $f\in L^p(\Omega; w\dd x)$. Then, $\widetilde{M}^w f\in L^p(\Omega; w\dd x)$ and there is a constant $C = C(n,p,w)$ such that
\begin{equation*}
\norm{\widetilde{M}^w f}_{L^p(\Omega; w\dd x)}\leq C\norm{f}_{L^p(\Omega; w\dd x)}. 
\end{equation*}
\end{lemma}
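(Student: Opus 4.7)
The plan is to mimic the classical Hardy--Littlewood argument by interpolating between a trivial $L^\infty$ bound and a weak type $(1,1)$ inequality, with Vitali's covering lemma and the doubling of $w$ in $\Omega$ playing the role that Lebesgue-measure doubling plays in the unweighted setting. The $L^\infty$ endpoint is immediate from the definition: each average over $\Omega\cap Q$ is controlled by $\norm{f}_{L^\infty(\Omega;w\dd x)}$, so $\widetilde{M}^w$ maps $L^\infty(\Omega;w\dd x)$ to itself with norm at most $1$. The main work is in establishing the weak $(1,1)$ bound
\[
w\bigl(\{x\in\Omega\mathbin{:}\widetilde{M}^w f(x)>\lambda\}\bigr)\le \frac{C(n,w)}{\lambda}\norm{f}_{L^1(\Omega;w\dd x)},\qquad \lambda>0,
\]
after which the Marcinkiewicz interpolation theorem immediately produces the strong $L^p$ bound for every $1<p<\infty$, with constant depending only on $n$, $p$, and on $D=D(n,w)$.

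To prove the weak $(1,1)$ estimate, I would fix $\lambda>0$, write $E_\lambda$ for the level set, and for each $x\in E_\lambda$ select a cube $Q_x\ni x$ with midpoint in $\Omega$ satisfying
\[
\lambda\, w(\Omega\cap Q_x)<\int_{\Omega\cap Q_x}\abs{f(y)}w(y)\dd y\le \norm{f}_{L^1(\Omega;w\dd x)}.
\]
Since $w(\Omega\cap Q)\to w(\Omega)$ as $l(Q)\to\infty$ for cubes with midpoint in $\Omega$, this inequality forces a uniform upper bound on the side lengths of the $Q_x$ (the case $w(\Omega)<\infty$ with small $\lambda$ is absorbed by engulfing $E_\lambda$ in a single large cube). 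The standard Vitali $5r$-covering lemma then extracts a countable pairwise disjoint subfamily $\{Q_i\}$ with $E_\lambda\subset\bigcup_i 5Q_i$. Because each $Q_i$ has its midpoint in $\Omega$ and dilation preserves the midpoint, three iterations of the doubling condition give
\[
w(\Omega\cap 5Q_i)\le w(\Omega\cap 8Q_i)\le D^3\, w(\Omega\cap Q_i).
\]
Combining with disjointness and the selection inequality yields
\[
w(E_\lambda)\le \sum_i w(\Omega\cap 5Q_i)\le D^3\sum_i w(\Omega\cap Q_i)\le \frac{D^3}{\lambda}\sum_i\int_{\Omega\cap Q_i}\abs{f}w\dd y\le \frac{D^3}{\lambda}\norm{f}_{L^1(\Omega;w\dd x)},
\]
which is the required weak-type estimate with $C=D^3$.

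The main obstacle I anticipate is the technical bookkeeping around the Vitali extraction: securing a genuine uniform bound on the side lengths of the selected cubes so that the $5r$-covering lemma applies directly, and verifying measurability of $\widetilde{M}^w f$. Neither is serious --- the side-length issue is resolved either by the observation above or by exhausting $\Omega$ by bounded pieces and passing to a monotone limit, while measurability follows by restricting the supremum to the countable family of cubes with rational midpoints and rational side lengths, which is permissible thanks to the inner continuity of $Q\mapsto w(\Omega\cap Q)$. After these checks, the Marcinkiewicz interpolation theorem closes the argument and delivers the claimed constant $C=C(n,p,w)$.
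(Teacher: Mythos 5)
The paper itself does not prove this lemma; it simply cites Theorem~3.13 in \cite{MR2867756}. Your proposal supplies the standard Hardy--Littlewood argument (trivial $L^\infty$ bound, weak $(1,1)$ via Vitali, Marcinkiewicz interpolation), which is the natural way to prove it and almost certainly the route the cited reference takes as well, so there is no real divergence in strategy.

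One point deserves tightening. The claim that the selection inequality ``forces a uniform upper bound on the side lengths of the $Q_x$'' because $w(\Omega\cap Q)\to w(\Omega)$ as $l(Q)\to\infty$ is only valid for cubes with a \emph{fixed} midpoint; once the midpoints range over an unbounded $\Omega$, the convergence is not uniform in $x_Q$, and doubling alone does not compare $w(\Omega\cap Q)$ across cubes of equal side length whose midpoints are far apart. Your second fix is the right one, but it should be phrased as restricting the \emph{side lengths of the competing cubes}, not as exhausting $\Omega$: define $\widetilde{M}^w_R$ by taking the supremum only over cubes $Q\ni x$ with $x_Q\in\Omega$ and $l(Q)\le R$. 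Then the Vitali radii are bounded, the weak $(1,1)$ estimate holds with a constant independent of $R$, and since $\widetilde{M}^w_R f\uparrow\widetilde{M}^w f$ pointwise, continuity from below of the measure $w\dd x$ passes the estimate to the limit. (Truncating $\Omega$ itself does not bound the cube sizes, since the maximal function is non-centered.) With that adjustment the proof is complete and correct, including the observation that measurability follows by restricting the supremum to cubes with rational data.
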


To use the following lemma is an idea of Iwaniec and Nolder's (\cite{ino}, Lemma 4). 
\begin{lemma}\label{gt:ino} Let $\Omega$ be a Boman domain with constant $N\geq 1$, $w$ a doubling weight in $\Omega$ with a constant $D$, and $1\leq p<\infty$. Furthermore, let $\left\{a_Q\mathbin{:}Q\in\mathcal{W}(\Omega)\right\}$ be a set of nonnegative real numbers. Then, there is a constant $C = C\left(p,D,N\right)$ for which 
\begin{equation*}
\norm{ \sum_{Q\in\mathcal{W}(\Omega)}a_Q\Chi_{\Omega\cap NQ}}_{L^p(\Omega; w\dd x)}\leq C\norm{\sum_{Q\in\mathcal{W}(\Omega)}a_Q\Chi_Q}_{L^p(\Omega; w\dd x)}. 
\end{equation*}
\end{lemma}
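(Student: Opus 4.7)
The plan is to treat the case $p=1$ by direct integration and handle $1<p<\infty$ by duality, using the non-centered maximal function $\widetilde{M}^w$ from Definition~\ref{gt:mutilde} together with its $L^p(\Omega;w\dd x)$-boundedness (Lemma~\ref{gt:bddlemma}) as the main analytic tool. The one geometric ingredient I need throughout is a localized doubling bound: for every $Q\in\mathcal{W}(\Omega)$,
\[
w(\Omega\cap NQ)\le C(N,D)\,w(Q).
\]
Since $Q=Q(x_Q,r_Q)$ with $x_Q\in Q\subset\Omega$, iterating the $\Omega$-doubling property $\lceil\log_2 N\rceil$ times along the cubes $Q(x_Q,2^k r_Q)$ yields this with a constant depending only on $N$ and $D$.

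For $p=1$ the inequality is immediate: swapping sum and integral (by nonnegativity), applying the localized doubling estimate cube by cube, and using that the Whitney cubes $Q\subset\Omega$ are pairwise disjoint,
\[
\int_\Omega \sum_Q a_Q\Chi_{\Omega\cap NQ}\,w\dd x=\sum_Q a_Q w(\Omega\cap NQ)\le C\sum_Q a_Q w(Q)=C\int_\Omega \sum_Q a_Q\Chi_Q\,w\dd x.
\]

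For $1<p<\infty$ I would pass to duality: it suffices to bound
\[
I(g):=\int_\Omega \Bigl(\sum_{Q\in\mathcal{W}(\Omega)} a_Q\Chi_{\Omega\cap NQ}(x)\Bigr)g(x)w(x)\dd x
\]
uniformly over nonnegative $g$ with $\norm{g}_{L^{p'}(\Omega;w\dd x)}=1$. For a fixed $Q\in\mathcal{W}(\Omega)$, the cube $NQ$ has midpoint $x_Q\in\Omega$ and contains every $y\in Q$, hence is an admissible competitor in Definition~\ref{gt:mutilde}, and so
\[
\frac{1}{w(\Omega\cap NQ)}\int_{\Omega\cap NQ}g\,w\dd x\le \widetilde{M}^w g(y)\qquad\text{for every }y\in Q.
\]
Averaging this inequality against $w$ over $y\in Q$ and invoking the doubling bound from the first paragraph yields
\[
\int_{\Omega\cap NQ}g\,w\dd x\le \frac{w(\Omega\cap NQ)}{w(Q)}\int_Q \widetilde{M}^w g(y)w(y)\dd y\le C(N,D)\int_Q \widetilde{M}^w g(y)w(y)\dd y.
\]

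Multiplying by $a_Q$, summing in $Q$, using the pairwise disjointness of $\mathcal{W}(\Omega)$, and then applying Hölder's inequality together with Lemma~\ref{gt:bddlemma}, I conclude
\begin{align*}
I(g)&=\sum_Q a_Q\int_{\Omega\cap NQ}g\,w\dd x\\
&\le C\int_\Omega\Bigl(\sum_Q a_Q\Chi_Q(y)\Bigr)\widetilde{M}^w g(y)w(y)\dd y\\
&\le C\,\norm{\sum_Q a_Q\Chi_Q}_{L^p(\Omega;w\dd x)}\,\norm{\widetilde{M}^w g}_{L^{p'}(\Omega;w\dd x)}\\
&\le C\,\norm{\sum_Q a_Q\Chi_Q}_{L^p(\Omega;w\dd x)}.
\end{align*}
Taking the supremum over $g$ yields the desired inequality. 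The main obstacle I anticipate is purely bookkeeping: verifying that $NQ$ is admissible in Definition~\ref{gt:mutilde} (its center lies in $\Omega$) and that the intermediate doubling constant depends only on $N$ and $D$; the rest is a standard linearization-by-duality argument of Iwaniec--Nolder type.
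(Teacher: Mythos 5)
Your proof is correct and follows essentially the same route as the paper: the $p=1$ case via cube-by-cube doubling and disjointness of Whitney cubes, and the $1<p<\infty$ case via duality, the pointwise bound of the $\Omega\cap NQ$-average of the test function by $\widetilde{M}^w$ on $Q$, averaging over $Q$ with the doubling constant, H\"older, and Lemma~\ref{gt:bddlemma}. The only cosmetic difference is that you spell out the iterated doubling estimate $w(\Omega\cap NQ)\le C(N,D)\,w(Q)$ and the $p=1$ computation explicitly, which the paper leaves implicit.
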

\begin{proof}
The case $p=1$ follows from the fact that the weight $w$ is doubling in $\Omega$; we will assume that $1<p<\infty$. By duality and the fact that bounded measurable functions are dense in $L^{p'}(\Omega; w\dd x)$, where $p'=p/(p-1)$, it is enough to show that
\begin{equation*}
\left\lvert\int_\Omega\sum_{Q\in\mathcal{W}(\Omega)}a_Q\Chi_{\Omega\cap NQ}(x)\psi(x)w(x)\dd x\right\rvert \leq C\norm{\sum_{Q\in\mathcal{W}(\Omega)}a_Q\Chi_Q}_{L^p(\Omega; w\dd x)}
\end{equation*}
for every bounded measurable function $\psi$ satisfying $\norm{\psi}_{L^{p'}(\Omega; w\dd x)}=1$. Let $\psi$ be such a function and $Q\in\mathcal{W}(\Omega)$. Then, for every $x\in \Omega\cap NQ$, 
\begin{equation*}
\int_{\Omega\cap NQ}\abs{\psi(y)}w(y)\dd y \leq w(\Omega\cap NQ)\widetilde{M}^w\psi(x).
\end{equation*}
Averaging this inequality over $Q\subset \Omega\cap NQ$ with respect to the measure $w\dd x$ and
using the fact that $w$ is doubling in $\Omega$, we obtain
\begin{align}\label{kertis}
\nonumber \int_{\Omega\cap NQ}\abs{\psi(y)}w(y)\dd y & \leq \frac{w(\Omega\cap NQ)}{w(Q)}\int_Q\widetilde{M}^w\psi(x)w(x)\dd x\\
& \leq C(D,N)\int_Q\widetilde{M}^w\psi(x)w(x)\dd x.
\end{align}
Using the triangle inequality and the estimate \eqref{kertis}, we obtain
\begin{align}
\nonumber \left\lvert\nonumber \int_\Omega\sum_{Q\in\mathcal{W}(\Omega)}a_Q\Chi_{\Omega\cap NQ}(x)\psi(x)w(x)\dd x\right\rvert & \leq \sum_{Q\in\mathcal{W}(\Omega)}a_Q\int_{\Omega\cap NQ}\abs{\psi(x)}w(x)\dd x\\
& \leq C(D,N)\sum_{Q\in\mathcal{W}(\Omega)}a_Q\int_Q\widetilde{M}^w\psi(x)w(x)\dd x.\label{this}
\end{align} 
Next, we rearrange \eqref{this} and apply Hölder's inequality: 
\begin{align*}
C\sum_{Q\in\mathcal{W}(\Omega)}a_Q\int_Q\widetilde{M}^w\psi(x)w(x)\dd x & = C\int_\Omega\sum_{Q\in\mathcal{W}(\Omega)}a_Q\Chi_Q(x)\widetilde{M}^w\psi(x)w(x)\dd x\\
& \leq C\norm{\sum_{Q\in\mathcal{W}(\Omega)}a_Q\Chi_Q}_{L^p(\Omega; w\dd x)}\norm{\widetilde{M}^w\psi}_{L^{p'}(\Omega; w\dd x)}.
\end{align*}
The desired result follows by the boundedness of the maximal function $\widetilde{M}^w$ (Lemma \ref{gt:bddlemma}) and the fact that $\norm{\psi}_{L^{p'}(\Omega; w\dd x)}=1$.
\end{proof}

Recall that $u_{w;Q} = w(Q)^{-1}\int_{Q}u(x)w(x)\dd x.$

\begin{lemma}\label{gt:chestimate}
Let $\Omega$ be a Boman domain, $w$ a doubling weight in $\Omega$ with a constant $D$,  and $\mathcal{C}(Q)=\left(Q_0, \ldots, Q_k\right)$ a chain joining the cube $Q_0$ to $Q_k=Q\in\mathcal{W}(\Omega)$, with $k$ depending on $Q$. Then, for all $u\in L^1_{\loc}(\Omega; w\dd x)$,
\begin{equation*}
\abs{u_{w; Q^*} - u_{w; Q_0^*}} \leq C(n,D)\sum_{R\in\mathcal{C}(Q)}\frac{1}{w(R^*)}\int_{R^*}\abs{u(x)-u_{w; R^*}}w(x)\dd x.
\end{equation*}
\end{lemma}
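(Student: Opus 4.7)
The plan is to proceed by a telescoping argument along the chain. Writing $\mathcal{C}(Q)=(Q_0,\ldots,Q_k)$ with $Q_k=Q$, I first split
\begin{equation*}
u_{w;Q^*}-u_{w;Q_0^*}=\sum_{j=1}^{k}\bigl(u_{w;Q_j^*}-u_{w;Q_{j-1}^*}\bigr),
\end{equation*}
so it suffices to estimate each consecutive difference and sum, noting that each Whitney cube $R\in\mathcal{C}(Q)$ appears at most twice in such a sum.

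To control a single difference $|u_{w;Q_j^*}-u_{w;Q_{j-1}^*}|$, I invoke the chain condition (Definition \ref{gt:chaindef}) to obtain an auxiliary cube $R_j\subset Q_j^*\cap Q_{j-1}^*$ with $l(R_j)\geq C(n)\max\{l(Q_j^*),l(Q_{j-1}^*)\}$. Rewriting the constants as $w$-averages over $R_j$ and applying the triangle inequality, I get
\begin{equation*}
|u_{w;Q_j^*}-u_{w;Q_{j-1}^*}|\leq \frac{1}{w(R_j)}\int_{R_j}|u(x)-u_{w;Q_j^*}|w(x)\dd x+\frac{1}{w(R_j)}\int_{R_j}|u(x)-u_{w;Q_{j-1}^*}|w(x)\dd x,
\end{equation*}
then enlarge the integration domains to $Q_j^*$ and $Q_{j-1}^*$ respectively (since $R_j$ is contained in both).

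The next step is to absorb the factor $1/w(R_j)$ into $1/w(Q_j^*)$ (and similarly $1/w(Q_{j-1}^*)$) using the doubling hypothesis. Since $R_j\subset\Omega$ its midpoint lies in $\Omega$, and because $l(Q_j^*)\leq C(n)\,l(R_j)$ with $R_j\subset Q_j^*$, the cube $Q_j^*$ is contained in a cube centered at $x_{R_j}$ of side length at most a fixed multiple of $l(R_j)$. Iterating the $\Omega$-doubling property of $w$ a bounded (depending only on $n$) number of times gives $w(Q_j^*)\leq C(n,D)\,w(R_j)$, and symmetrically for $Q_{j-1}^*$. This yields
\begin{equation*}
|u_{w;Q_j^*}-u_{w;Q_{j-1}^*}|\leq C(n,D)\sum_{R\in\{Q_j,Q_{j-1}\}}\frac{1}{w(R^*)}\int_{R^*}|u(x)-u_{w;R^*}|w(x)\dd x.
\end{equation*}
Summing over $j=1,\ldots,k$ and collecting the (at most two) contributions from each $R\in\mathcal{C}(Q)$ produces the claimed bound.

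The only mildly delicate step is the doubling comparison between $R_j$ and the cubes $Q_j^*,Q_{j-1}^*$: one must be careful to verify that the doubling property (which is formulated in terms of cubes centered in $\Omega$) can indeed be applied to compare $w(R_j)$ with $w(Q_j^*)$, since these are not concentric cubes. The argument above handles this by enlarging to a cube centered at $x_{R_j}$ that contains $Q_j^*$ and has side length comparable to $l(R_j)$, after which a bounded number of doublings suffices. Everything else is the routine telescoping bookkeeping.
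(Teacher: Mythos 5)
Your proof is correct and follows essentially the same telescoping-plus-doubling strategy as the paper. The only (cosmetic) difference is the choice of intermediate comparison set: you subtract and add the $w$-average over the auxiliary cube $R_j$ furnished by Definition~\ref{gt:chaindef}, whereas the paper uses the average over $Q_j^*\cap Q_{j-1}^*$ (and then appeals to the same $R_j$ only for the measure comparison $w(Q_j^*)\lesssim w(Q_j^*\cap Q_{j-1}^*)$); both choices give comparable $w$-measures by the doubling hypothesis, and your handling of the non-concentric doubling step via a bounded number of iterations from a cube centered at $x_{R_j}$ is sound.
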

\begin{proof} Fix a $u\in L^1_{\loc}(\Omega; w\dd x)$. Then 
\begin{align}
\nonumber \abs{u_{w;Q^*}-u_{w;Q_0^*}} & = \abs{\sum_{i=1}^k u_{w;Q_i^*}-u_{w;Q_{i-1}^*}} \leq \sum_{i=1}^k\abs{u_{w;Q_i^*}-u_{w;Q_{i-1}^*}}\\
\label{suma} & \leq \sum_{i=1}^k\abs{u_{w; Q_i^*}-u_{w; Q_i^*\cap Q^*_{i-1}}} + \abs{u_{w; Q_i^*\cap Q^*_{i-1}} - u_{w; Q^*_{i-1}}}.
\end{align}
Let us fix $i=1,2,\ldots,k$. By the definition of a chain (Definition \ref{gt:chaindef}), there exists a cube $\widetilde{Q}\subset Q_i^*\cap Q_{i-1}^*$ such that $w(\widetilde{Q})>0$; likewise there is $\lambda$, depending only on the dimension $n$, such that $Q^*_{i-1}\cup Q_i^* \subset \lambda\widetilde{Q}$. Since the weight $w$ is doubling in $\Omega$, we obtain the estimate
\begin{equation}\label{xsomelabelz}
w(Q^*_i) \leq w(Q^*_{i-1}\cup Q_i^*) \leq w(\lambda\widetilde{Q}) \leq C(\lambda,D) w(\widetilde{Q}) \leq C(\lambda,D) w(Q_i^*\cap Q^*_{i-1}). 
\end{equation}
The exact same estimate holds for $w(Q^*_{i-1})$. We may estimate both parts of the sum \eqref{suma} as follows. For the sake of demonstration, choose the first one:
\begin{align}
\nonumber \abs{u_{w; Q_i^*}-u_{w; Q_i^*\cap Q^*_{i-1}}} & = \abs{\frac{1}{w(Q_i^*\cap Q_{i-1}^*)}\int_{Q_i^*\cap Q_{i-1}^*} \left(u(x)-u_{w; Q_i^*}\right)w(x)\dd x}\\
\nonumber & \leq \frac{1}{w(Q_i^*\cap Q_{i-1}^*)}\int_{Q_i^*\cap Q_{i-1}^*}\abs{u(x)-u_{w; Q_i^*}} w(x)\dd x\\
\label{ttupplaa} & \leq \frac{C(\lambda,D)}{w(Q_i^*)}\int_{Q_i^*}\abs{u(x)-u_{w; Q_i^*}}w(x)\dd x.
\end{align}
In \eqref{ttupplaa}, we applied the doubling property of $w$ through the estimate \eqref{xsomelabelz}. Estimating the second part of \eqref{suma} in like manner and taking all indices into account, we have
\begin{equation*}
\abs{u_{w;Q^*}-u_{w;Q_0^*}} \leq C(\lambda,D)\sum_{R\in\mathcal{C}(Q)}\frac{1}{w(R^*)}\int_{R^*}\abs{u(x)-u_{w; R^*}}w(x)\dd x,
\end{equation*}
which is the desired estimate, since $\lambda$ only depends on $n$.
\end{proof}

Finally, the following theorem connects the global scale and the cubewise estimates. 
\begin{theorem}\label{gt:weak}
Let $\Omega$ be a Boman domain with a constant $N\ge 1$ and $w$ a doubling weight in $\Omega$ with a constant $D$. If $u\in L^1_{\loc} (\Omega; w\dd x)$ and $1\le p<\infty$, then 
\begin{equation*}
\int_\Omega \abs{u(x)-u_{w; Q_0^*}}^pw(x)\dd x \leq C(n,p,D,N)\sum_{Q\in\mathcal{W}(\Omega)}\int_{Q^*}\abs{u(x)-u_{w; Q^*}}^pw(x)\dd x.
\end{equation*}
\end{theorem}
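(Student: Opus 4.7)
The plan is to reduce the global integral to a sum over Whitney cubes, then use the triangle inequality to introduce the averages $u_{w;Q^*}$ and invoke the chain estimate from Lemma \ref{gt:chestimate} to propagate the comparison with $u_{w;Q_0^*}$ through the chain. The hard part will be exchanging the sum over chains for a sum over shadows and absorbing the resulting enlarged indicator functions, which is exactly what Lemma \ref{gt:ino} is designed for.

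More concretely, since the Whitney cubes partition $\Omega$, I would first split
\[
\int_\Omega \abs{u(x)-u_{w;Q_0^*}}^p w(x)\dd x=\sum_{Q\in\mathcal{W}(\Omega)}\int_Q \abs{u(x)-u_{w;Q_0^*}}^pw(x)\dd x,
\]
then apply the triangle inequality and the elementary bound $(a+b)^p\le 2^{p-1}(a^p+b^p)$ to dominate the right-hand side by
\[
2^{p-1}\sum_{Q\in\mathcal{W}(\Omega)}\int_Q \abs{u(x)-u_{w;Q^*}}^pw(x)\dd x+2^{p-1}\sum_{Q\in\mathcal{W}(\Omega)} w(Q)\abs{u_{w;Q^*}-u_{w;Q_0^*}}^p.
\]
The first sum is immediately controlled by the right-hand side of the theorem since $Q\subset Q^*$. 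The second sum is where the chain structure enters.

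For the second sum, Lemma \ref{gt:chestimate} gives $\abs{u_{w;Q^*}-u_{w;Q_0^*}}\le C\sum_{R\in\mathcal{C}(Q)} a_R$, where $a_R=w(R^*)^{-1}\int_{R^*}\abs{u-u_{w;R^*}}w\dd x$. I would rewrite $w(Q)\bigl(\sum_{R\in\mathcal{C}(Q)}a_R\bigr)^p=\int_Q\bigl(\sum_{R\in\mathcal{C}(Q)}a_R\bigr)^pw\dd x$, and then, for $x\in Q$, use the chain-shadow duality $R\in\mathcal{C}(Q)\Leftrightarrow Q\in\mathcal{S}(R)$ together with the Boman condition \eqref{gt:bchain3} to dominate $\sum_{R\in\mathcal{C}(Q)}a_R\le \sum_{R\in\mathcal{W}(\Omega)}a_R\Chi_{\Omega\cap NR}(x)$. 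Summing over the disjoint Whitney cubes yields
\[
\sum_{Q\in\mathcal{W}(\Omega)} w(Q)\abs{u_{w;Q^*}-u_{w;Q_0^*}}^p\le C\int_\Omega\Biggl(\sum_{R\in\mathcal{W}(\Omega)}a_R\Chi_{\Omega\cap NR}(x)\Biggr)^pw(x)\dd x.
\]

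At this point Lemma \ref{gt:ino} removes the enlargement, bounding the last display by a constant times $\int_\Omega\bigl(\sum_R a_R\Chi_R\bigr)^p w\dd x$, which equals $\sum_R a_R^p w(R)$ by the disjointness of Whitney cubes. Finally, Jensen's inequality applied to the definition of $a_R$ gives
\[
a_R^p w(R)\le \frac{w(R)}{w(R^*)}\int_{R^*}\abs{u(x)-u_{w;R^*}}^p w(x)\dd x\le \int_{R^*}\abs{u(x)-u_{w;R^*}}^p w(x)\dd x,
\]
which closes the estimate. The two delicate points to be careful about are the $p=1$ case of Lemma \ref{gt:ino} (where one only needs the doubling property, not the maximal function bound) and the verification that the constant depends only on $n,p,D,N$, which follows because every constant introduced along the way is of that form.
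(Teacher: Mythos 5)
Your argument is correct and follows essentially the same route as the paper: a two-term split via the triangle inequality, the chain estimate of Lemma~\ref{gt:chestimate}, chain–shadow duality together with the Boman condition to pass to indicators of $\Omega\cap NR$, Lemma~\ref{gt:ino} to remove the dilation, and disjointness plus doubling to close. The only cosmetic difference is that you define $a_R$ as the $L^1$-oscillation of $u$ on $R^*$ and apply Jensen's inequality at the very end, whereas the paper defines $a_R$ as the $L^p$-oscillation (applying H\"older immediately after Lemma~\ref{gt:chestimate}); these bookkeeping choices are equivalent.
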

\begin{proof}
Let $Q_0$ be the fixed central cube in the chain decomposition of $\Omega$. By the triangle inequality for each $x\in\Omega$, we may write 
\begin{align*}
\abs{u(x)-u_{w; Q_0^*}} & \leq\abs{u(x) - \sum_{Q\in\mathcal{W}(\Omega)} u_{w; Q^{*}}\Chi_Q(x)} + \abs{\sum_{Q\in\mathcal{W}(\Omega)}u_{w; Q^{*}}\Chi_Q(x)-u_{w; Q_0^{*}}} \\
& = g_1(x) + g_2(x). 
\end{align*} 
Hence, it holds that 
\begin{equation*}
\int_\Omega\abs{u(y)-u_{w; Q_0^*}}^pw(x)\dd x \leq 2^p\int_\Omega g_1(x)^pw(x)\dd x + 2^p\int_\Omega g_2(x)^pw(x)\dd x.
\end{equation*}
We will estimate each integral on the right-hand side separately, beginning with the first one. Recalling that the Whitney cubes cover $\Omega$ and are disjoint, we have 
\begin{align*}
\int_\Omega g_1(x)^pw(x)\dd x & = \int_\Omega \abs{\sum_{Q\in\mathcal{W}(\Omega)}u(x)\Chi_Q(x) - \sum_{Q\in\mathcal{W}(\Omega)} u_{w; Q^{*}}\Chi_Q(x)}^pw(x)\dd x\\
&  = \sum_{Q\in\mathcal{W}(\Omega)}\int_Q\abs{u(x)-u_{w; Q^*}}^pw(x)\dd x\\
& \leq \sum_{Q\in\mathcal{W}(\Omega)}\int_{Q^*}\abs{u(x)-u_{w; Q^*}}^pw(x)\dd x,
\end{align*} 
which is of the required form. The integral associated with $g_2$ is estimated by using chains. We begin by 
\begin{align}
\nonumber \int_\Omega g_2(x)^pw(x)\dd x & = \int_\Omega\abs{\sum_{Q\in\mathcal{W}(\Omega)}\left(u_{w;Q^*}-u_{w;Q_0^*}\right)\Chi_Q(x)}^pw(x)\dd x\\
\label{wen}& \leq \int_\Omega\left(\sum_{Q\in\mathcal{W}(\Omega)}\abs{u_{w;Q^*}-u_{w;Q_0^*}}\Chi_Q(x)\right)^pw(x)\dd x.
\end{align}
Applying Lemma \ref{gt:chestimate} and Hölder's inequality, we obtain for every $Q\in\mathcal{W}(\Omega)$
\begin{equation}\label{termi}
\abs{u_{w;Q^*}-u_{w;Q_0^*}}\Chi_Q \leq C(n,D)\sum_{R\in\mathcal{C}(Q)}a_R\Chi_Q,
\end{equation}
where for every $R\in\mathcal{C}(Q)$
\begin{equation*}
a_R = \left(\frac{1}{w(R^*)}\int_{R^*}\abs{u(x)-u_{w;R^*}}^pw(x)\dd x\right)^\frac{1}{p} \geq 0.
\end{equation*}
Summing the estimates \eqref{termi} and using the shadow--chain duality, we obtain
\begin{align*}
\sum_{Q\in\mathcal{W}(\Omega)}\abs{u_{w;Q^*}-u_{w;Q_0^*}}\Chi_Q & \leq C(n,D)\sum_{Q\in\mathcal{W}(\Omega)}\sum_{R\in\mathcal{C}(Q)}a_R\Chi_Q\\
& = C\sum_{R\in\mathcal{W}(\Omega)}a_R\sum_{Q\in\mathcal{S}(R)}\Chi_Q.
\end{align*}
By the Boman chain condition \eqref{gt:bchain3}, we have $\sum_{Q\in\mathcal{S}(R)}\Chi_Q \leq \Chi_{NR}$, and
\begin{align*}
\sum_{Q\in\mathcal{W}(\Omega)} &\abs{u_{w;Q^*}-u_{w;Q_0^*}}\Chi_Q  = \left(\sum_{Q\in\mathcal{W}(\Omega)}\abs{u_{w;Q^*}-u_{w;Q_0^*}}\Chi_Q\right)\Chi_\Omega \\
&\leq C(n,D)\left(\sum_{R\in\mathcal{W}(\Omega)}a_R\Chi_{NR}\right)\cdot\Chi_\Omega = C(n,D)\left(\sum_{R\in\mathcal{W}(\Omega)}a_R\Chi_{\Omega\cap NR}\right).
\end{align*}
We substitute this back into \eqref{wen}, and respectively apply Lemma \ref{gt:ino}, Hölder's inequality for sums recalling that Whitney cubes are disjoint, and the doubling property of the weight $w$: 
\begin{align*}
& \int_\Omega g_2(x)^pw(x)\dd x \leq C(n,p,D)\int_\Omega\left(\sum_{R\in\mathcal{W}(\Omega)}a_R\Chi_{\Omega\cap NR}(x)\right)^pw(x)\dd x\\
& \hspace*{3em} \leq C(n,p,D,N)\int_\Omega\left(\sum_{R\in\mathcal{W}(\Omega)}a_R\Chi_R(x)\right)^pw(x)\dd x\\
& \hspace*{3em} = C\sum_{R\in\mathcal{W}(\Omega)}a_R^p\int_\Omega\Chi_R(x)w(x)\dd x\\
& \hspace*{3em} = C\sum_{R\in\mathcal{W}(\Omega)}\frac{w(R)}{w(R^*)}\int_{R^*}\abs{u(x)-u_{w;R^*}}^pw(x)\dd x\\
& \hspace*{3em} \leq C\sum_{R\in\mathcal{W}(\Omega)}\int_{R^*}\abs{u(x)-u_{w;R^*}}^pw(x)\dd x.
\end{align*}
\end{proof}

\section{Conclusion, applications to distance weights, and the $p$-Laplacian}

We are now ready to combine the local and local-to-global theorems into our main result. 
\begin{theorem}\label{main}
Let $\Omega\subset\mathbb{R}^n$ be a Boman domain with a constant $N\geq 1$, $1<p\leq q<\infty$, and $\left(v,w\right)$ a pair of weights in $\Omega$, $w$ doubling in $\Omega$ with a constant $D$, and $\sigma = v^{-1/(p-1)}$. Suppose that there exist strictly positive constants $C_w$ and $\delta_w$ such that for every cube $Q\in\mathcal{W}(\Omega)$ it holds that
\begin{equation*}
\frac{w(E)}{w(R)} \leq C_w\left(\frac{\abs{E}}{\abs{R}}\right)^{\delta_w}
\end{equation*}
for all $Q^*$-dyadic cubes $R\subset Q^*$ and all measurable sets $E\subset R$, and that there exist similar constants $C_\sigma$ and $\delta_\sigma$ for the weight $\sigma$. Furthermore, suppose that there exists a constant $K>0$ such that for every cube $Q\in\mathcal{W}(\Omega)$, we have
\begin{equation}
\left(\frac{1}{\abs{R}^{1-1/n}}\right)^pw(R)^{\frac{p}{q}}\sigma(R)^{p-1} \leq K 
\end{equation}
for all $Q^*$-dyadic cubes $R\subset Q^*$. Then, for any $u\in\Lip_{\loc}(\Omega)$
\begin{equation*}
\left(\inf_{c\in\mathbb{R}^n} \int_\Omega \abs{u(x)-c}^qw(x)\dd x\right)^\frac{1}{q} \leq C\left(\int_\Omega \abs{\grad u(x)}^pv(x)\dd x\right)^\frac{1}{p},
\end{equation*}
 where the constant $C = C(n, p, q, N, D, K, C_w, C_\sigma, \delta(w), \delta(\sigma)) > 0$. 
\end{theorem}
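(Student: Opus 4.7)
The proof will combine the local inequality Theorem~\ref{lokaali}, applied inside each dilated Whitney cube $Q^*$, with the local-to-global Theorem~\ref{gt:weak}. Let $Q_0\in\mathcal{W}(\Omega)$ denote the central cube in the chain decomposition. Since $u_{w;Q_0^*}$ is a specific constant, the infimum on the left-hand side of the desired inequality is bounded above by
\[
\int_\Omega |u(x)-u_{w;Q_0^*}|^q w(x)\dd x.
\]
Applying Theorem~\ref{gt:weak} with exponent $q$ in place of $p$ (permitted since $q\ge 1$ and $w$ is doubling in $\Omega$ by assumption) yields
\[
\int_\Omega |u-u_{w;Q_0^*}|^q w\dd x \le C\sum_{Q\in\mathcal{W}(\Omega)}\int_{Q^*} |u-u_{w;Q^*}|^q w\dd x.
\]

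For each $Q\in\mathcal{W}(\Omega)$, the inclusion $\overline{Q^*}\subset\Omega$ is a consequence of \eqref{sideledil}, so $u\in\Lip_{\loc}(\Omega)$ implies $u\in\Lip(Q^*)$. The hypotheses of Theorem~\ref{lokaali}, with the role of $Q_0$ played by $Q^*$, are precisely the cube-by-cube conditions assumed in Theorem~\ref{main}: the dyadic $A_\infty$ conditions for both $w$ and $\sigma$ on $Q^*$-dyadic subcubes of $Q^*$, and the compatibility condition \eqref{koo0}. This produces
\[
\left(\int_{Q^*}|u-u_{Q^*}|^q w\dd x\right)^{1/q}\le C\left(\int_{Q^*}|\grad u|^p v\dd x\right)^{1/p}
\]
with a constant $C$ independent of the particular Whitney cube. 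To convert the weighted mean $u_{w;Q^*}$ appearing in the previous display into the unweighted mean $u_{Q^*}$ supplied by Theorem~\ref{lokaali}, I use the standard near-minimizing property of the weighted average: for any constant $c$,
\[
\int_{Q^*}|u-u_{w;Q^*}|^q w\dd x\le 2^q\int_{Q^*}|u-c|^q w\dd x,
\]
which I apply with $c=u_{Q^*}$.

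Combining the estimates and summing over $Q\in\mathcal{W}(\Omega)$, the task reduces to controlling
\[
\sum_{Q\in\mathcal{W}(\Omega)}\left(\int_{Q^*}|\grad u|^p v\dd x\right)^{q/p}.
\]
Since $q/p\ge 1$, the elementary inequality $\sum_i a_i^{q/p}\le \bigl(\sum_i a_i\bigr)^{q/p}$ for nonnegative $a_i$ collapses the outer exponent, and the bounded overlap of $\{Q^*\}_{Q\in\mathcal{W}(\Omega)}$ (a standard feature of the Whitney decomposition) yields $\sum_Q\int_{Q^*}|\grad u|^p v\dd x\le C(n)\int_\Omega|\grad u|^p v\dd x$. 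Extracting a $q$-th root concludes the argument. The one substantive point requiring care is the uniform control of constants across Whitney cubes, but this is built into Theorem~\ref{main} by positing a single quintuple $(C_w,\delta_w,C_\sigma,\delta_\sigma,K)$ of constants that works simultaneously for every $Q\in\mathcal{W}(\Omega)$; no additional geometric argument is needed to propagate them.
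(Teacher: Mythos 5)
Your proof is correct and follows essentially the same route as the paper's: apply Theorem~\ref{gt:weak} with exponent $q$, convert the weighted means $u_{w;Q^*}$ to unweighted means $u_{Q^*}$ (you do this via a near-minimizing estimate, the paper via the equivalent $2^{q-1}$-split plus Jensen, both yielding the same bound), apply Theorem~\ref{lokaali} cubewise with uniform constants, and close with $q/p\ge 1$ and bounded overlap of the dilated Whitney cubes.
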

\begin{proof}
Begin by applying Theorem \ref{gt:weak}:
\begin{align}\label{gh}
\nonumber & \inf_{c\in\mathbb{R}^n} \int_\Omega \abs{u(x)-c}^qw(x)\dd x \leq C\sum_{Q\in\mathcal{W}(\Omega)}\int_{Q^*}\abs{u(x)-u_{w;Q^*}}^qw(x)\dd x \\
& \hspace*{3em} \leq C\sum_{Q\in\mathcal{W}(\Omega)} 2^{q-1} \left(\int_{Q^*}\abs{u(x)-u_{Q^*}}^qw(x)\dd x + w(Q^*)\abs{u_{Q^*}-u_{w;Q^*}}^q\right).
\end{align}
The second term can estimated using Hölder's inequality and absorbed into the first: 
\begin{align*}
& w(Q^*)\abs{u_{Q^*}-u_{w;Q^*}}^q = w(Q^*)\abs{\frac{1}{w(Q^*)}\int_{Q^*}\left(u_{Q^*}-u(x)\right)w(x)\dd x}^q\\
& \hspace*{3em} \leq w(Q^*)\left(\frac{1}{w(Q^*)}\int_{Q^*}\abs{u(x)-u_{Q^*}}^qw(x)\dd x\right) = \int_{Q^*}\abs{u(x)-u_{Q^*}}^qw(x)\dd x. 
\end{align*}
Continuing from \eqref{gh}, we apply Theorem \ref{lokaali}, the fact that $q\geq p$, and that the $Q^*$ have bounded overlap: $\sum_{Q\in\mathcal{W}}\Chi_{Q^*}\leq C(n)$. This yields
\begin{align*}
& C\sum_{Q\in\mathcal{W}(\Omega)} 2^{q}\int_{Q^*}\abs{u(x)-u_{Q^*}}^qw(x)\dd x \leq C\sum_{Q\in\mathcal{W}(\Omega)}\left(\int_{Q^*}\abs{\grad u(x)}^pv(x)\dd x\right)^\frac{q}{p}\\
& \hspace*{3em} \leq C \left(\sum_{Q\in\mathcal{W}(\Omega)}\int_{Q^*}\abs{\grad u(x)}^pv(x)\dd x\right)^\frac{q}{p} \leq C\left(\int_\Omega \abs{\grad u(x)}^pv(x)\dd x\right)^\frac{q}{p}.
\end{align*}
Taking $q$th roots completes the proof.
\end{proof}
It remains to say something about what pairs of weights  fulfill the requirements of Theorem \ref{main}. The following two theorems give two applications to distance weights, provided that the Aikawa (or, indeed, Assouad; see \cite{MR3055588}) dimension of the set from which the distance is measured is ``small enough''. The integral condition \eqref{aikawaa} below expresses exactly this, even if we will say no more about either Aikawa or Assouad. This being the case, it can be proven (see \cite{MR3900847}) that the distance function raised to a suitable power is in the class $A_\infty$, and we are able to apply the results at hand. 

The following result provides a localized variant of \cite{MR3900847}*{Theorem 6.1}.
\begin{theorem}\label{appl1}
Let $1<p\leq q\leq\frac{np}{n-p}<\infty$, and $E\subset\mathbb{R}^n$ a nonempty closed set satisfying 
\begin{equation}\label{aikawaa}
\dashint_{R(x_R,r)}d(x,E)^{-n+\frac{q}{p}(n-p)} \dd x\leq C_1 r^{-n+\frac{q}{p}(n-p)}
\end{equation}
for every cube $R$ with its midpoint $x_R$ in $E$ and $r>0$ with a constant $C_1=C_1(n,p,q,E)$. Let $Q\subset\mathbb{R}^n$ be a cube and $u\in\Lip(Q)$. Then there exists another constant $C = C(n,p,q,C_1)$ such that 
\begin{equation*}
\left(\int_Q\abs{u(x)-u_Q}^qd(x, E)^{-n+\frac{q}{p}(n-p)}\dd x\right)^\frac{1}{q} \leq C\left(\int_Q\abs{\grad u(x)}^p\dd x\right)^\frac{1}{p}.
\end{equation*}
\end{theorem}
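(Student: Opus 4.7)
The plan is to reduce the statement to Theorem \ref{lokaali} applied with $Q_0 = Q$, trivial right-hand weight $v \equiv 1$, and left-hand weight $w(x) = d(x, E)^{s}$, where $s := -n + \tfrac{q}{p}(n-p)$. The assumption $p \leq q \leq \tfrac{np}{n-p}$ forces $p < n$ and $s \in [-p, 0]$, a useful sign control. With $v \equiv 1$ we have $\sigma = v^{-1/(p-1)} \equiv 1$, and the dyadic $A_\infty$ condition for $\sigma$ holds trivially with universal constants. Two items therefore remain to be checked: the dyadic $A_\infty$ property of $w$ in $Q$, and the compatibility condition \eqref{koo0}, which in the present setting takes the form
\[
|R|^{p/n - 1}\, w(R)^{p/q} \leq K
\]
for every $Q$-dyadic cube $R \subset Q$.

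For the $A_\infty$ property, I would invoke the standard fact, recorded in \cite{MR3900847}, that the integral bound \eqref{aikawaa} amounts to an Aikawa-dimensional condition on $E$ which is known to force $d(\cdot, E)^{s}$ to be a Muckenhoupt $A_\infty$ weight on $\mathbb{R}^n$, with constants controlled by $n, p, q, C_1$, and hence to lie in $A_\infty^d(Q)$.

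To verify \eqref{koo0}, I would split the dyadic cubes $R\subset Q$ into two cases. If $d(x_R, E) \leq C(n)\, l(R)$, then $R$ is contained in a cube of side length $\lesssim l(R)$ centered at a point of $E$, so \eqref{aikawaa} applied to that enclosing cube yields $w(R) \leq C\, l(R)^{n+s}$. Otherwise $d(x, E)$ is comparable to $d(x_R, E) \gtrsim l(R)$ throughout $R$, and because $s \leq 0$ one obtains $w(R) \lesssim l(R)^n\, d(x_R, E)^s \leq C\, l(R)^{n+s}$ in the same way. Since $n + s = \tfrac{q}{p}(n-p)$, this gives $w(R)^{p/q} \leq C\, l(R)^{n-p}$, which combined with $|R|^{p/n - 1} = l(R)^{p-n}$ cancels to a dimensionless constant. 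Theorem \ref{lokaali} then completes the proof. The only nonroutine ingredient is the $A_\infty$ step, imported as a black box from the Aikawa-dimension literature; the compatibility check itself is a short two-case computation.
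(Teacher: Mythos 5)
Your proposal is correct and follows essentially the same route as the paper's proof: it reduces to Theorem~\ref{lokaali} with $v=\sigma\equiv 1$, imports the $A_\infty$ property of $w=d(\cdot,E)^{s}$ from \cite{MR3900847}, and verifies \eqref{koo0} by a two-case split according to whether the dyadic cube lies close to $E$ or not. The only difference is cosmetic: you derive the unified bound $w(R)\lesssim l(R)^{n+s}$ in both cases and then cancel, whereas the paper's far-from-$E$ case tracks the exponent of $d(Q,E)$ and its sign directly.
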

\begin{proof}
Let $w(x) = d(x,E)^{-n+\frac{q}{p}(n-p)}$ and $v(x)= 1 = \sigma(x)$ for $x\in\mathbb{R}^n$. Because we have assumed condition \eqref{aikawaa}, Corollary 3.8 in \cite{MR3900847} tells that the weight $w$ belongs to the global Muckenhoupt classes $A_1(\mathbb{R}^n)\subset A_\infty(\mathbb{R}^n)$, whence in particular $w\in A^d_\infty(Q_0)$ with constants
$C_w$ and $\delta_w$ depending on $n$, $p$, $q$ and $C_1$ only. It is enough to show that there is a constant 
$K = K(n, p, q, C_1)$ such that \eqref{koo0} holds for all cubes $Q\in\mathcal{D}(Q_0)$; if so, the result follows from Theorem \ref{lokaali}. 

To this end, fix a dyadic cube $Q = Q(x_Q,r_Q)\in\mathcal{D}(Q_0)$. Assume first that $Q(x_Q, 2r_Q)\cap E \neq \emptyset$. Then there is a $z\in E$ such that $Q\subset Q(z,3r_Q)$. As we have assumed that $E$ satisfies the condition \eqref{aikawaa}, we may estimate
\begin{align*}
w(Q)^\frac{p}{q} & \leq \left(\int_{Q(z,3r_Q)}d(x, E)^{-n+\frac{q}{p}(n-p)}\dd x\right)^{\frac{p}{q}}\\
& \leq C(n,p,q,C_1)\cdot\left(r_Q^{\frac{q}{p}(n-p)}\right)^\frac{p}{q}\\
& = Cr_Q^{n-p} = C\abs{Q}^{1-\frac{p}{n}}.
\end{align*}
Since $\sigma(Q)^{p-1} = \abs{Q}^{p-1}$, we calculate
\begin{align*}
\left(\frac{1}{\abs{Q}^{1-\frac{1}{n}}}\right)^pw(Q)^\frac{p}{q}\sigma(Q)^{p-1} \leq C\abs{Q}^{-p+\frac{p}{n}+1-\frac{p}{n}+p-1} = C,
\end{align*}
which proves \eqref{koo0} in the case $Q(x_Q, 2r_Q)\cap E \neq \emptyset$. 
Assume next that $Q(x_Q,2r_Q)\cap E = \emptyset$. In this case, we have for every $x\in Q = Q(x_Q,r_Q)$
\begin{equation*}
C(n)d(x,E) \leq d(Q,E) \leq d(x,E).
\end{equation*}
Hence
\begin{align*}
w(Q)^\frac{p}{q}\sigma(Q)^{p-1} & \leq C(n,p,q)\abs{Q}^\frac{p}{q}d(Q,E)^{-\frac{np}{q}+n-p}\abs{Q}^{p-1}\\
& = C(n,p,q)\abs{Q}^{\frac{p}{q}+p-1}d(Q,E)^{-\frac{np}{q}+n-p}.
\end{align*}
By assumption $n-p-\frac{np}{q}\leq 0$ and $d(Q,E)\ge r_Q$, and thus 
\begin{align*}
\left(\frac{1}{\abs{Q}^{1-\frac{1}{n}}}\right)^pw(Q)^\frac{p}{q}\sigma(Q)^{p-1} & \leq C(n,p,q)\abs{Q}^{-p+\frac{p}{n}+\frac{p}{q}+p-1}d(Q,E)^{-\frac{np}{q}+n-p}\\
& \leq C\abs{Q}^{\frac{p}{n}+\frac{p}{q}-1}\abs{Q}^{\frac{1}{n}(-\frac{np}{q}+n-p)} = C,
\end{align*}
which completes the proof.
\end{proof}

\begin{theorem}
Let $1<p\leq q\leq\frac{np}{n-p}<\infty$, $\beta = n-\frac{q}{p}(n-p)$, and $\Omega\subset\mathbb{R}^n$ a Boman domain with constant $N\ge 1$ such that $w(x) = d(x,\partial\Omega)^{-\beta}$ is doubling in $\Omega$ with constant $D$.  
Then there is a constant $C = C(n,p,q,D,N)$ such that 
\begin{equation*}
\inf_{c\in\mathbb{R}}\left(\int_\Omega\abs{u(x)-c}^qd(x, \partial\Omega)^{-n+\frac{q}{p}(n-p)}\dd x\right)^\frac{1}{q} \leq C\left(\int_\Omega\abs{\grad u(x)}^p\dd x\right)^\frac{1}{p}
\end{equation*}
for every $u\in\Lip_{\loc}(\Omega)$. 
\end{theorem}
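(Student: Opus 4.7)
The plan is to deduce the statement directly from Theorem~\ref{main} applied to the pair $(v,w)$ with $v\equiv 1$, so that $\sigma=v^{-1/(p-1)}\equiv 1$, and $w(x)=d(x,\partial\Omega)^{-\beta}$. Doubling of $w$ in $\Omega$ is assumed as a hypothesis, and the uniform $A_\infty^d(Q^*)$ condition for $\sigma\equiv 1$ is trivial (with constants independent of $Q$). Hence the only things left to verify, uniformly in the Whitney cube $Q\in\mathcal{W}(\Omega)$, are that $w\in A_\infty^d(Q^*)$ with constants independent of $Q$, and that the compatibility condition \eqref{koo0} holds with a uniform $K$ for every $Q^*$-dyadic subcube $R\subset Q^*$.

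Both conditions will rest on a single observation: by the Whitney property \eqref{sideledil}, $d(x,\partial\Omega)\approx l(Q^*)$ throughout $Q^*$, with constants depending only on $n$, so $w(x)\approx l(Q^*)^{-\beta}$ is essentially constant on $Q^*$. From this, for any dyadic $R\subset Q^*$ and measurable $E\subset R$ I will obtain
\[
\frac{w(E)}{w(R)}\;\leq\; C(n,\beta)\,\frac{\abs{E}}{\abs{R}},
\]
which gives $A_\infty^d(Q^*)$ with constants $C_w=C(n,\beta)$ and $\delta_w=1$ independent of $Q$. The same comparability yields $w(R)\approx l(Q^*)^{-\beta}\abs{R}$, so together with $\sigma(R)=\abs{R}$ the left-hand side of \eqref{koo0} is comparable to
\[
\abs{R}^{p/n+p/q-1}\,l(Q^*)^{-\beta p/q}.
\]

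The only real computation is then to verify that this last quantity is uniformly bounded, which is where the hypothesis $q\leq np/(n-p)$ and the specific exponent $\beta=n-\tfrac{q}{p}(n-p)$ both enter decisively. A direct check shows that $q\leq np/(n-p)$ is equivalent to $p/n+p/q-1\geq 0$, so I can absorb the power of $\abs{R}$ using $\abs{R}\leq l(Q^*)^n$; the value of $\beta$ is then engineered precisely so that the resulting total exponent on $l(Q^*)$ vanishes, producing a uniform $K=K(n,p,q)$. With every hypothesis of Theorem~\ref{main} verified, the theorem applies and delivers the inequality as stated, with $v\equiv 1$ leaving the unweighted gradient norm on the right-hand side. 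The only conceptual subtlety is the algebraic matching between $\beta$ and the dyadic scaling on each dilated Whitney cube; once that is spelled out, no hard estimates remain.
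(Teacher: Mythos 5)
Your proposal is mathematically correct but takes a genuinely different route from the paper. The paper proves this theorem without invoking Theorem~\ref{main} (and hence without the sparse domination machinery of Theorem~\ref{lokaali}): it applies the local-to-global Theorem~\ref{gt:weak}, replaces the weighted average $u_{w;Q^*}$ by the unweighted one $u_{Q^*}$, then exploits \eqref{sideledil} to pull the essentially constant factor $w\approx l(Q^*)^{-\beta}$ out of the $Q^*$-integral and apply the unweighted $(q,p)$-Poincar\'e inequality (Lemma~\ref{morePoinca}) on each dilated Whitney cube. The arithmetic $q+n-\tfrac{nq}{p}=\beta$ makes the powers of $l(Q^*)$ cancel, and summing with bounded overlap finishes the proof. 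Your approach instead feeds $v\equiv 1$, $\sigma\equiv 1$, $w=d(\cdot,\partial\Omega)^{-\beta}$ into Theorem~\ref{main} and verifies all of its hypotheses, which means also checking the uniform $A^d_\infty(Q^*)$ condition and the compatibility condition \eqref{koo0}. Both verifications rest, as you correctly identify, on the same Whitney observation $d(x,\partial\Omega)\approx l(Q^*)$ on $Q^*$, and your exponent bookkeeping ($q\le np/(n-p)\Leftrightarrow p/n+p/q-1\ge 0$, with $\beta$ chosen so the $l(Q^*)$-power vanishes) is exactly right. What your approach buys is conceptual unification — it shows the distance weight is genuinely covered by the general Theorem~\ref{main}, in the same spirit as the paper's treatment of the first application (Theorem~\ref{appl1}). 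What the paper's approach buys is economy: for this specific weight the sparse-domination technology is overkill, and the direct argument via the unweighted Poincar\'e inequality on Whitney cubes is shorter and avoids the supplementary $A_\infty$ conditions altogether, even though they do hold automatically here.
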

%
%
%
%
\begin{proof}
We can apply Theorem \ref{gt:weak}, and then pass from $u_{w;Q^*}$ to $u_{Q^*}$ as in the proof of Theorem \ref{main}:
\begin{align} 
\nonumber & \inf_{c\in\mathbb{R}}\int_\Omega\abs{u(x)-c}^qw(x)\dd x\\
\nonumber & \hspace*{3em} \leq C(n,q,D,N) \sum_{Q\in \mathcal{W}(\Omega)}\int_{Q^*}\abs{u(x)-u_{w; Q^*}}^qw(x)\dd x\\
\nonumber & \hspace*{3em} \leq C\sum_{Q\in\mathcal{W}(\Omega)}\int_{Q^*}\abs{u(x)-u_{Q^*}}^qw(x)\dd x\\
\label{ntj} & \hspace*{3em} \leq C\sum_{Q\in\mathcal{W}(\Omega)}l(Q^*)^{-\beta}\int_{Q^*}\abs{u(x)-u_{Q^*}}^q\dd x,
\end{align}
where $C = C(n,p,q,D,N)$. The final inequality \eqref{ntj} follows from \eqref{sideledil}, that is, the side length of a dilated  Whitney cube $Q^*$ is comparable to its distance from the domain boundary. 

To continue, fix a cube $Q\in\mathcal{W}(\Omega)$. The $(q,p)$-Poincaré inequality of Lemma \ref{morePoinca} implies 
\begin{align*}
& \int_{Q^*}\abs{u(x)-u_ {Q^*}}^q\dd x \leq C(n,p,q)l(Q^*)^q\abs{Q^*}^{1-\frac{q}{p}}\left(\int_{Q^*}\abs{\grad u(x)}^p\dd x\right)^\frac{q}{p}\\
& \hspace*{3em} \leq Cl(Q^*)^\beta\left(\int_{Q^*}\abs{\grad u(x)}^p\dd x\right)^\frac{q}{p},
\end{align*} 
since $q+n-\frac{nq}{p} = \beta$. Substituting this estimate back into \eqref{ntj}, keeping in mind that $q\geq p$ and that the dilated Whitney cubes $Q^*$ have a bounded overlap, we have
\begin{align*}
& \inf_{c\in\mathbb{R}}\int_\Omega\abs{u(x)-c}^qw(x)\dd x \leq C\sum_{Q\in\mathcal{W}(\Omega)}\left(\int_{Q^*}\abs{\grad u(x)}^p\dd x\right)^{\frac{q}{p}}\\
& \hspace*{3em} \leq C\left(\sum_{Q\in\mathcal{W}(\Omega)}\int_{Q^*}\abs{\grad u(x)}^p\dd x\right)^\frac{q}{p} \leq C\left(\int_{\Omega}\abs{\grad u(x)}^p\dd x\right)^\frac{q}{p},
\end{align*}
with $C = C(n,p,q,D,N)$, whereby the proof is complete. 
\end{proof}
\begin{remark}
As an alternative to the doubling assumption on $w$, we could have assumed $\Omega$ to be such that its boundary satisfies the Aikawa condition
\begin{equation}\label{aikawa}
\dashint_{R(x_R, r)}d(x,\partial\Omega)^{-n+\frac{q}{p}(n-p)}\dd x\leq C_1 r^{-n+\frac{q}{p}(n-p)}
\end{equation}
for every cube $R$ with its midpoint $x_R$ on $\partial\Omega$ and $r>0$ with a constant $C_1 = C_1(n,p,q,\Omega)$. In this case, as in the proof of Theorem~\ref{appl1}, we conclude that $w\in A_\infty(\mathbb{R}^n)$, and hence $w$ is doubling in $\mathbb{R}^n$. Lemma \ref{l.doubling} now implies that $w$ is doubling in $\Omega$ with constant $D=D(n,p,q,Q_0,\Omega)$.
\end{remark} 

Finally, let us take a look into the $p$-Laplace equation
\begin{equation}\label{plap}
\lap_p u = \div\left(\abs{\grad u}^{p-2}\grad u\right) = 0.
\end{equation}
More specifically, we consider weak supersolutions of the $p$-Laplace equation. Recall that $W^{1,p}_{\loc}(\Omega)$ is the Sobolev space of all $f\in L^p_{\loc}(\Omega)$ whose distributional first derivatives belong to $L^p_{\loc}(\Omega)$.

\begin{dfn}Let $\Omega\subset\mathbb{R}^n$ be an open set and $1<p<\infty$. We call $u\in W^{1,p}_{\loc}(\Omega)$ a \emph{weak supersolution} of the $p$-Laplace equation \eqref{plap} in $\Omega$ if for all nonnegative $\eta\in C^\infty_0(\Omega)$ 
\begin{equation*}
\int_\Omega \abs{\grad u(x)}^{p-2}\grad u(x)\cdot\grad\eta(x)\dd x \geq 0.
\end{equation*}
\end{dfn}
For instance, the first eigenfunction of the $p$-Laplacian is a nonnegative weak supersolution of the original equation. For an introduction to the eigenvalue problem, see \cite{MR2462954}. 

As per regularity theory, weak supersolutions could be said to satisfy half of the Harnack inequality. The following theorem is Theorem 3.59 in \cite{MR2305115} formulated for cubes. 
\begin{theorem}
Let $1<p<\infty$, $\Omega\subset\mathbb{R}^n$ be an open set, and let $u\in W^{1,p}_{\loc}(\Omega)$ be a nonnegative weak supersolution of the $p$-Laplace equation in $\Omega$. Then, for each $0<\beta<\infty$ with $\beta(n-p)<n(p-1)$ there is a constant $C = C(n,p,\beta)$ such that 
\begin{equation}\label{halfnack} 
\left(\dashint_Q  u(x)^\beta \dd x \right)^\frac{1}{\beta} \leq C\essinf_{x\in Q} u(x)
\end{equation}
for all cubes $Q\subset\Omega$ such that $4Q\subset\Omega$.
\end{theorem}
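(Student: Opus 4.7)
The plan is to run the standard Moser iteration for nonnegative weak $p$-superharmonic functions, following the strategy of Chapter~3 of \cite{MR2305115} with balls replaced by cubes. The three ingredients are a Caccioppoli estimate on powers of $u$, a reverse Hölder inequality derived via the Sobolev inequality, and a logarithmic/John--Nirenberg bridge across the exponent $0$. Because the weak supersolution $u$ may vanish, I would work with the regularization $u+\varepsilon$ for $\varepsilon>0$ and send $\varepsilon\downarrow 0$ at the end.

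First, testing the weak formulation of $\lap_p u\le 0$ with $\varphi=\eta^p(u+\varepsilon)^{\gamma}$, where $\eta\in C_0^\infty(4Q)$ is a cutoff and $\gamma\neq 1-p$ is a real parameter to be chosen, yields a Caccioppoli-type bound
\[
\int \eta^p\abs{\grad u}^p(u+\varepsilon)^{\gamma-1}\dd x\le C(p,\gamma)\int\abs{\grad \eta}^p(u+\varepsilon)^{\gamma+p-1}\dd x.
\]
With the substitution $v=(u+\varepsilon)^{(\gamma+p-1)/p}$, the $(p,p)$-Sobolev inequality on a concentric pair of nested cubes $Q'\subset Q''\subset 4Q$ converts this into a reverse Hölder estimate of the form
\[
\left(\dashint_{Q'}v^{q^\ast}\dd x\right)^{1/q^\ast}\le \frac{C}{l(Q'')-l(Q')}\left(\dashint_{Q''}v^{p}\dd x\right)^{1/p},
\]
where $q^\ast=np/(n-p)$ when $p<n$ and any fixed $q^\ast>p$ otherwise.

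Iterating this reverse Hölder along a geometric sequence of shrinking cubes between $2Q$ and $Q$, and then passing to the limit $\varepsilon\downarrow 0$, produces two complementary one-sided Harnack conclusions. With $\gamma_0:=\gamma+p-1<0$ one obtains, for every $s>0$,
\[
\essinf_Q u\ge C(n,p,s)\left(\dashint_{2Q}u^{-s}\dd x\right)^{-1/s},
\]
while with $0<\gamma_0\le \beta$ one obtains, for any fixed sufficiently small $s_0>0$,
\[
\left(\dashint_Q u^{\beta}\dd x\right)^{1/\beta}\le C(n,p,\beta)\left(\dashint_{2Q}u^{s_0}\dd x\right)^{1/s_0}.
\]
The hypothesis $\beta(n-p)<n(p-1)$ is precisely what keeps the constant $C(p,\gamma)$ finite throughout the iteration on positive exponents up to $\beta$, and the assumption $4Q\subset\Omega$ provides the room required for the cutoff $\eta$ and the nested family of cubes.

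The main obstacle, and the step I expect to be most delicate, is the crossover at $\gamma_0=0$ needed to splice together the two iterations above. To handle it, I would test the equation with the singular test function $\eta^p(u+\varepsilon)^{1-p}$ to obtain the logarithmic Caccioppoli estimate
\[
\int_{2Q}\eta^p\abs{\grad\log(u+\varepsilon)}^p\dd x\le C(p)\int_{2Q}\abs{\grad \eta}^p\dd x,
\]
which forces $\log u$ to lie in $\mathrm{BMO}(2Q)$ with a constant depending only on $n$ and $p$. The John--Nirenberg inequality then furnishes a small $s_0=s_0(n,p)>0$ and a constant $C$ such that
\[
\left(\dashint_{2Q}u^{s_0}\dd x\right)\left(\dashint_{2Q}u^{-s_0}\dd x\right)\le C.
\]
Chaining this bridge with the two iteration estimates above yields the claimed weak Harnack inequality.
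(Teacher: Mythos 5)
The paper does not prove this statement at all: it attributes it verbatim to Theorem~3.59 of Heinonen--Kilpel\"ainen--Martio \cite{MR2305115}, merely noting that it is being ``formulated for cubes.'' So there is no internal proof to compare against, and your proposal is supplying one where the paper gives only a citation.

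Your outline reproduces the standard Moser-iteration proof of the weak Harnack inequality, which is essentially what the cited reference does (with balls instead of cubes, a cosmetic difference). The structure is sound: a Caccioppoli estimate from testing with $\eta^p(u+\varepsilon)^{\gamma}$, the Sobolev inequality upgrading $L^p$ of $(u+\varepsilon)^{\gamma_0/p}$ to $L^{p^\ast}$, the two complementary iterations (downward over $\gamma_0<0$ to reach $\essinf$, and upward over $0<\gamma_0$ to reach $\beta$), and the logarithmic Caccioppoli estimate plus John--Nirenberg to bridge the exponent $0$. One remark worth making explicit is where $\beta(n-p)<n(p-1)$ actually enters: since the supersolution inequality only yields a one-sided estimate, the Caccioppoli step works only for $\gamma<0$, so in the upward branch $\gamma_0=\gamma+p-1$ is confined to $(0,p-1)$; each Sobolev step multiplies the exponent by $n/(n-p)$, and the supremum of reachable exponents is therefore $n(p-1)/(n-p)$, which is exactly the stated constraint on $\beta$. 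You should also note that for $p\ge n$ the restriction on $\beta$ is vacuous and the Sobolev step embeds into any finite $L^q$, which your ``any fixed $q^\ast>p$'' clause handles. With these clarifications the proposal is a correct reconstruction of the classical argument that the paper invokes by reference.
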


This setting motivates yet another simple application. Namely, using the local result (Theorem \ref{lokaali}), it is possible to obtain a single-weighted Poincar\'e inequality in cubes lying ``well inside'' $\Omega$ when the weight is a suitable supersolution to the $p$-Laplace equation. Observe that we can always choose $p=2$ below,
which leads to the classical Laplace equation.
\begin{theorem}\label{plapplication_thm}
Let $\Omega$ be a bounded domain, and  $\frac{2n}{n+1}<p<\infty$. Furthermore, let $w\in W^{1,p}_{\loc}(\Omega)$ be a  weak supersolution of the $p$-Laplace equation in $\Omega$ such that $w(x)>0$ for almost every  $x\in \Omega$, and $Q_0\subset\Omega$ a cube such that $4Q_0\subset\Omega$. The weighted Poincar\'e inequality
\begin{equation*}
\int_{Q_0}\abs{u(x)-u_{Q_0}}^pw(x)\dd x \leq Cl(Q_0)^p\int_{Q_0} \abs{\grad u(x)}^pw(x)\dd x
\end{equation*}
holds for every $u\in\Lip(Q_0)$ with $C = C(n, p) > 0$. 
\end{theorem}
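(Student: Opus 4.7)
The plan is to apply the local two-weight Poincar\'e inequality of Theorem~\ref{lokaali} with $q = p$ and the pair of weights $(v,w) = (l(Q_0)^p w, w)$; the associated dual weight is then $\sigma := v^{-1/(p-1)} = l(Q_0)^{-p/(p-1)}w^{-1/(p-1)}$, whose $A_\infty^d$ constants are identical to those of $w^{-1/(p-1)}$. Once the hypotheses of Theorem~\ref{lokaali}---the two $A_\infty^d(Q_0)$ conditions and the compatibility~\eqref{koo0}---are verified with constants depending only on $n$ and $p$, its conclusion reads
\[\Bigl(\int_{Q_0}\abs{u-u_{Q_0}}^p w\dd x\Bigr)^{1/p}\leq C(n,p)\Bigl(\int_{Q_0}\abs{\grad u}^p l(Q_0)^p w\dd x\Bigr)^{1/p},\]
which, after raising to the $p$-th power and pulling out $l(Q_0)^p$, is the desired inequality.

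First I would record the geometric fact that $4Q\subset\Omega$ for every $Q_0$-dyadic cube $Q\subset Q_0$. For $Q=Q_0$ this is assumed; for any strict dyadic descendant $l(Q)\leq l(Q_0)/2$, and the hypothesis $4Q_0\subset\Omega$ forces $\dist(x_Q,\partial\Omega)\geq\tfrac{3}{2}l(Q_0)\geq 2l(Q)$, whence $4Q\subset\Omega$. This makes the weak Harnack inequality~\eqref{halfnack} available on each such $Q$, and the standing assumption $p>\tfrac{2n}{n+1}$ is precisely the condition under which the exponent $\beta=1$ satisfies $\beta(n-p)<n(p-1)$ and is thus admissible. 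Consequently $\dashint_Q w\dd x\leq C(n,p)\essinf_Q w$. Combined with the trivial pointwise estimate $w^{-1/(p-1)}\leq (\essinf_Q w)^{-1/(p-1)}$ on $Q$, which yields $\dashint_Q w^{-1/(p-1)}\dd x\leq (\essinf_Q w)^{-1/(p-1)}$, this produces the local Muckenhoupt $A_p$ bound
\[\bigl(\,\dashint_Q w\dd x\bigr)\bigl(\,\dashint_Q w^{-1/(p-1)}\dd x\bigr)^{p-1}\leq C(n,p),\]
equivalently $w(Q)\,(w^{-1/(p-1)}(Q))^{p-1}\leq C(n,p)\abs{Q}^p$, for every $Q_0$-dyadic $Q\subset Q_0$.

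From here, \eqref{koo0} follows immediately: its left-hand side equals $l(Q_0)^{-p}\abs{Q}^{-p(1-1/n)}w(Q)(w^{-1/(p-1)}(Q))^{p-1}\leq C(n,p)l(Q_0)^{-p}\abs{Q}^{p/n}\leq C(n,p)$, so $K=C(n,p)$ works. Moreover, the uniform local $A_p$ bounds imply the dyadic $A_\infty^d(Q_0)$ property of $w$ with constants depending only on $n$ and $p$ by standard Muckenhoupt theory, and the duality $w\in A_p \Leftrightarrow w^{-1/(p-1)}\in A_{p'}$ gives $\sigma\in A_\infty^d(Q_0)$ with constants depending only on $n$ and $p$ as well. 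Assembling these verifications and invoking Theorem~\ref{lokaali} delivers the claim with $C=C(n,p)$.

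The only delicate step, and the main obstacle, is the derivation of the local $A_p$ bound. It rests on two ingredients: the weak Harnack estimate for the supersolution $w$ applied with exponent $\beta=1$, which requires and uses the full strength of the assumption $p>\tfrac{2n}{n+1}$, together with the trivial $L^\infty$ lower bound $w\geq\essinf_Q w$, which for supersolutions is available for free and controls $w^{-1/(p-1)}$ from above. The geometric verification that $4Q\subset\Omega$ for every dyadic $Q\subset Q_0$, the arithmetic leading to the compatibility constant, and the passage from dyadic $A_p$ to dyadic $A_\infty$ are routine.
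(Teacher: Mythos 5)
Your proposal is correct and uses the same choice of weight as the paper (note that $l(Q_0)^p = \lvert Q_0\rvert^{p/n}$), but it routes the verification of the $A_\infty^d(Q_0)$ hypotheses through classical Muckenhoupt theory rather than proving them directly. You derive a uniform dyadic $A_p$ bound from the $\beta=1$ case of the weak Harnack estimate~\eqref{halfnack} combined with the trivial pointwise bound $w^{-1/(p-1)}\le(\essinf_Q w)^{-1/(p-1)}$, and then invoke the classical implication $A_p^d\Rightarrow A_\infty^d$ (via the dyadic reverse H\"older inequality for $A_p$ weights) together with the duality $w\in A_p\Leftrightarrow w^{-1/(p-1)}\in A_{p'}$ for $\sigma$. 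This is sound and the constant tracking goes through, but it rests on imported theory. The paper instead stays self-contained: it applies~\eqref{halfnack} with an exponent $\beta>1$ (whose admissibility is also exactly $p>\tfrac{2n}{n+1}$) to obtain a genuine reverse H\"older inequality for $w$, deduces $w\in A_\infty^d(Q_0)$ in a few lines via H\"older, and proves $\sigma\in A_\infty^d(Q_0)$ directly with $\delta_\sigma=1$ by establishing the $A_1$-type bound $\esssup_Q\sigma\le C(n,p)\dashint_Q\sigma$ from the $\beta=1$ Harnack estimate applied to $v$. Your geometric argument that $4Q\subset\Omega$ for every $Q\in\mathcal{D}(Q_0)$ fills in a detail the paper dismisses as immediate, and your compatibility-condition arithmetic agrees with the paper's. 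In short: same skeleton, same condition on $p$, but you buy the $A_\infty^d$ memberships from Muckenhoupt theory while the paper manufactures them from the supersolution estimates alone.
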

\begin{proof} 
 We will check the assumptions of Theorem \ref{lokaali} for $v=\lvert Q_0\rvert^{\frac{p}{n}}w$, $p=q$, and $Q_0$ such that $4Q_0\subset\Omega$; such cubes will be referred to as \emph{admissible}. Whenever $Q_0$ is admissible, all dyadic subcubes $Q\in\mathcal{D}(Q_0)$ are naturally so as well. We remark that $v$ is also a weak supersolution to the $p$-Laplace equation in $\Omega$ such that $v(x)>0$ for almost every $x\in\Omega$. Write $\sigma = v^{-1/(p-1)}$. 

Fix an admissible cube $Q_0$ and $Q\in\mathcal{D}(Q_0)$.  Being a nonnegative supersolution, $w$ satisfies the inequality \eqref{halfnack} in $Q$.  In particular, letting $1\leq \beta=\beta(n,p)<\infty$ with $\beta(n-p)<n(p-1)$, we obtain a reverse H\"older inequality 
\begin{equation}\label{ainfty}
0 < \left(\dashint_{Q} w(x)^\beta\dd x\right)^\frac{1}{\beta} \leq C(n,p) \essinf_{x\in Q} w(x)\le C(n,p)\dashint_{Q} w(x) \dd x.
\end{equation}
Let $E\subset Q$ be a measurable set. By using first H\"older's inequality and then the reverse H\"older inequality \eqref{ainfty} with $\beta > 1$, which is possible since $p>\frac{2n}{n+1}$, we obtain
\begin{align*}
w(E)&\le \lvert E\rvert^{\frac{\beta-1}{\beta}}\biggl(\int_Q w(x)^{\beta}\dd x\biggr)^{\frac{1}{\beta}}\\
&\le C(n,p)\lvert E\rvert^{\frac{\beta-1}{\beta}}\lvert Q\rvert^{\frac{1}{\beta}}\dashint_Q w(x)\dd x\\
& = C(n,p)\biggl(\frac{\lvert E\rvert}{\lvert Q\rvert}\biggr)^{\frac{\beta-1}{\beta}}w(Q).
\end{align*}
Thus $w\in A^d_\infty(Q_0)$  with constants $\delta_w=\frac{\beta-1}{\beta}>0$ and $C_w=C(n,p)$. On the other hand, by H\"older's inequality, 
\[
1=\dashint_Q v(x)^{-\frac{1}{p}} v(x)^{\frac{1}{p}}\dd x\le \left(\dashint_Q v(x)^{-\frac{1}{p-1}}\dd x\right)^{\frac{p-1}{p}}\left(\dashint_Q v(x) \dd x\right)^{\frac{1}{p}}.
\]
Recall that $v(x)>0$ for almost every  $x\in \Omega$. Hence,  applying inequality \eqref{halfnack} with $u=v$ and $\beta=1$,  we obtain 
\begin{equation*}
\left(\dashint_Q v(x)^{-\frac{1}{p-1}} \dd x\right)^{-(p-1)}
\le \dashint_Q v(x)\dd x 
\le C(n,p)\essinf_{x\in Q} v(x).
\end{equation*}
Raising both sides to the negative power $-\frac{1}{p-1}$ yields
\begin{align*}
\esssup_{x\in Q}\sigma(x)&=\esssup_{x\in Q} v(x)^{-\frac{1}{p-1}}=
\left(\essinf_{x\in Q} v(x)\right)^{-\frac{1}{p-1}} \\&\le C(n,p)
\dashint_Q v(x)^{-\frac{1}{p-1}} \dd x
=C(n,p)
\dashint_Q \sigma(x) \dd x.
\end{align*}
As a consequence, we obtain for all measurable sets $E\subset Q$ that
\begin{align*}
\sigma(E)&\le \lvert E\rvert\esssup_{x\in Q} \sigma(x)\\
&\le C(n,p)\lvert E\rvert\left(\dashint_Q \sigma(x)\dd x\right)  =C(n,p)\frac{\lvert E\rvert}{\lvert Q\rvert}\sigma(Q).
\end{align*}
Thus $\sigma\in A^d_\infty(Q_0)$ with constants $\delta_\sigma=1$ and $C_\sigma=C(n,p)$. 

The condition \eqref{koo0} is verified next. Fix a cube $Q\in\mathcal{D}(Q_0)$, recall that $-\frac{1}{p-1}< 0$ and $v=\lvert Q_0\rvert^{\frac{p}{n}}w$, and apply $\eqref{ainfty}$:
\begin{align*}
& \left(\frac{1}{\abs{Q}^{1-1/n}}\right)^p\cdot \int_Q w(x)\dd x \cdot \left(\int_Q v(x)^{-\frac{1}{p-1}}\dd x\right)^{p-1}\\
& \hspace*{3em} \leq C(n,p)\left(\frac{1}{\abs{Q}^{1-1/n}}\right)^p\cdot \abs{Q}\essinf_{x\in Q}w(x)\cdot \abs{Q}^{p-1} \lvert Q_0\rvert^{-\frac{p}{n}}\left(\essinf_{x\in Q} w(x)\right)^{-1}\\
& \hspace*{3em} = C(n,p)\abs{Q}^\frac{p}{n}\abs{Q_0}^{-\frac{p}{n}} \leq C(n,p).
\end{align*}
The result then follows from Theorem \ref{lokaali}.
\end{proof}

\begin{bibdiv}
\begin{biblist}

\bib{MR2867756}{book}{
      author={Bj\"{o}rn, Anders},
      author={Bj\"{o}rn, Jana},
       title={Nonlinear potential theory on metric spaces},
      series={EMS Tracts in Mathematics},
   publisher={European Mathematical Society (EMS), Z\"{u}rich},
        date={2011},
      volume={17},
        ISBN={978-3-03719-099-9},
         url={https://doi.org/10.4171/099},
      review={\MR{2867756}},
}

\bib{Bom}{techreport}{
     AUTHOR = {Boman, Jan},
     TITLE = {{$L^p$}-estimates for very strongly elliptic systems, Department of Mathematics},
     INSTITUTION = {Department of Mathematics, Stockholm University},
     YEAR = {1982},
     TYPE = {Report},
     NUMBER = {29},
}

\bib{MR1427074}{incollection}{
      author={Buckley, Stephen M.},
      author={Koskela, Pekka},
      author={Lu, Guozhen},
       title={Boman equals {J}ohn},
        date={1996},
   booktitle={X{VI}th {R}olf {N}evanlinna {C}olloquium ({J}oensuu, 1995)},
   publisher={de Gruyter, Berlin},
       pages={91\ndash 99},
      review={\MR{1427074}},
}

\bib{MR1189903}{article}{
      author={Chanillo, Sagun},
      author={Wheeden, Richard~L.},
       title={Poincar\'{e} inequalities for a class of non-{$A_p$} weights},
        date={1992},
        ISSN={0022-2518},
     journal={Indiana Univ. Math. J.},
      volume={41},
      number={3},
       pages={605\ndash 623},
         url={https://doi.org/10.1512/iumj.1992.41.41033},
      review={\MR{1189903}},
}

\bib{MR1140667}{article}{
      author={Chua, Seng-Kee},
       title={Weighted {S}obolev inequalities on domains satisfying the chain
  condition},
        date={1993},
        ISSN={0002-9939},
     journal={Proc. Amer. Math. Soc.},
      volume={117},
      number={2},
       pages={449\ndash 457},
         url={https://doi.org/10.2307/2159182},
      review={\MR{1140667}},
}

\bib{MR3900847}{article}{
      author={Dyda, Bart{\l}omiej},
      author={Ihnatsyeva, Lizaveta},
      author={Lehrb\"{a}ck, Juha},
      author={Tuominen, Heli},
      author={V\"{a}h\"{a}kangas, Antti~V.},
       title={Muckenhoupt {$A_p$}-properties of distance functions and
  applications to {H}ardy-{S}obolev -type inequalities},
        date={2019},
        ISSN={0926-2601},
     journal={Potential Anal.},
      volume={50},
      number={1},
       pages={83\ndash 105},
         url={https://doi.org/10.1007/s11118-017-9674-2},
      review={\MR{3900847}},
}

\bib{MR0447953}{article}{
      author={Fefferman, Charles~L.},
      author={Stein, Elias~M.},
       title={{$H^{p}$} spaces of several variables},
        date={1972},
        ISSN={0001-5962},
     journal={Acta Math.},
      volume={129},
      number={3-4},
       pages={137\ndash 193},
         url={https://doi.org/10.1007/BF02392215},
      review={\MR{0447953}},
}

\bib{MR807149}{book}{
      author={Garc\'{\i}a-Cuerva, Jos\'{e}},
      author={Rubio~de Francia, Jos\'{e}~L.},
       title={Weighted norm inequalities and related topics},
      series={North-Holland Mathematics Studies},
   publisher={North-Holland Publishing Co., Amsterdam},
        date={1985},
      volume={116},
        ISBN={0-444-87804-1},
        note={Notas de Matem\'{a}tica [Mathematical Notes], 104},
      review={\MR{807149}},
}

\bib{MR1814364}{book}{
      author={Gilbarg, David},
      author={Trudinger, Neil~S.},
       title={Elliptic partial differential equations of second order},
      series={Classics in Mathematics},
   publisher={Springer-Verlag, Berlin},
        date={2001},
        ISBN={3-540-41160-7},
        note={Reprint of the 1998 edition},
      review={\MR{1814364}},
}

\bib{MR2445437}{book}{
      author={Grafakos, Loukas},
       title={Classical {F}ourier analysis},
     edition={2nd ed},
      series={Graduate Texts in Mathematics},
   publisher={Springer, New York},
        date={2008},
      volume={249},
        ISBN={978-0-387-09431-1},
      review={\MR{2445437}},
}

\bib{MR2305115}{book}{
      author={Heinonen, Juha},
      author={Kilpel\"{a}inen, Tero},
      author={Martio, Olli},
       title={Nonlinear potential theory of degenerate elliptic equations},
   publisher={Dover Publications, Inc., Mineola, NY},
        date={2006},
        ISBN={0-486-45050-3},
        note={Unabridged republication of the 1993 original},
      review={\MR{2305115}},
}

\bib{Hurri1990}{article}{
    AUTHOR = {Hurri, Ritva},
     TITLE = {The weighted {P}oincar\'{e} inequalities},
   JOURNAL = {Math. Scand.},
    VOLUME = {67},
      YEAR = {1990},
    NUMBER = {1},
     PAGES = {145--160},
      ISSN = {0025-5521},
       review={\MR{MR1081294}},
}

\bib{Hurri1992}{article}{
    AUTHOR = {Hurri-Syrj\"{a}nen, Ritva},
     TITLE = {Unbounded {P}oincar\'{e} domains},
   JOURNAL = {Ann. Acad. Sci. Fenn. Ser. A I Math.},
    VOLUME = {17},
      YEAR = {1992},
    NUMBER = {2},
     PAGES = {409--423},
      ISSN = {0066-1953},
   review={\MR{MR1190332}},
}

\bib{ino}{article}{
      author={Iwaniec, Tadeusz},
      author={Nolder, Craig~A.},
       title={{H}ardy-{L}ittlewood inequality for quasiregular mappings in
  certain domains in {${\mathbb R}^n$}},
        date={1985},
        ISSN={0066-1953},
     journal={Ann. Acad. Sci. Fenn. Ser. A I Math.},
      volume={10},
       pages={267\ndash 282},
         url={https://doi.org/10.5186/aasfm.1985.1030},
      review={\MR{802488}},
}

\bib{MR3055588}{article}{
      author={Lehrb\"{a}ck, Juha},
      author={Tuominen, Heli},
       title={A note on the dimensions of {A}ssouad and {A}ikawa},
        date={2013},
        ISSN={0025-5645},
     journal={J. Math. Soc. Japan},
      volume={65},
      number={2},
       pages={343\ndash 356},
         url={http://projecteuclid.org/euclid.jmsj/1366896636},
      review={\MR{3055588}},
}

\bib{MR2721744}{article}{
      author={Lerner, Andrei~K.},
       title={A pointwise estimate for the local sharp maximal function with
  applications to singular integrals},
        date={2010},
        ISSN={0024-6093},
     journal={Bull. Lond. Math. Soc.},
      volume={42},
      number={5},
       pages={843\ndash 856},
         url={https://doi.org/10.1112/blms/bdq042},
      review={\MR{2721744}},
}

\bib{MR3695871}{article}{
      author={Lerner, Andrei~K.},
      author={Ombrosi, Sheldy},
      author={Rivera-R\'{i}os, Israel~P.},
       title={On pointwise and weighted estimates for commutators of
  {C}alder\'{o}n-{Z}ygmund operators},
        date={2017},
        ISSN={0001-8708},
     journal={Adv. Math.},
      volume={319},
       pages={153\ndash 181},
         url={https://doi.org/10.1016/j.aim.2017.08.022},
      review={\MR{3695871}},
}

\bib{MR2462954}{article}{
      author={Lindqvist, Peter},
       title={A nonlinear eigenvalue problem},
        date={2008},
        book={
	      title={Topics in mathematical analysis},
	      series={Ser. Anal. Appl. Comput.},
	      volume={3},
	      publisher={World Sci. Publ., Hackensack, NJ},
	   },
       pages={175--203},
         url={https://doi.org/10.1142/9789812811066_0005},
      review={\MR{2462954}},
}

\bib{ref_Maz}{book}{
      author={Maz'ya, Vladimir},
      author={Shaposhnikova, Tatyana},
       title={Sobolev spaces},
     edition={2nd, rev. and augmented ed},
      series={Grundlehren der mathematischen {W}issenschaften},
   publisher={Springer},
     address={Berlin},
        date={cop. 2011},
        ISBN={978-3-642-15563-5},
}

\bib{MR0340523}{article}{
      author={Muckenhoupt, Benjamin},
      author={Wheeden, Richard},
       title={Weighted norm inequalities for fractional integrals},
        date={1974},
        ISSN={0002-9947},
     journal={Trans. Amer. Math. Soc.},
      volume={192},
       pages={261\ndash 274},
         url={https://doi.org/10.2307/1996833},
      review={\MR{0340523}},
}

\bib{revo}{article}{
      author={{Pereyra}, Mar{\'\i}a~Cristina},
       title={{Dyadic harmonic analysis and weighted inequalities: the sparse
  revolution}},
        date={2018Dec},
     journal={arXiv e-prints},
       pages={arXiv:1812.00850},
}

\bib{MR1052009}{article}{
      author={P\'erez, Carlos},
       title={Two weighted norm inequalities for {R}iesz potentials and uniform
  ${L}^p$-weighted {S}obolev inequalities},
        date={1990},
        ISSN={0022-2518},
     journal={Indiana Univ. Math. J.},
      volume={39},
      number={1},
       pages={31\ndash 44},
      review={\MR{1052009}},
}

\bib{MR3606547}{article}{
      author={Ramseyer, Mauricio},
      author={Salinas, Oscar},
      author={Viviani, Beatriz E.},
       title={Two-weight norm inequalities for the local maximal function},
        date={2017},
        ISSN={1050-6926},
     journal={J. Geom. Anal.},
      volume={27},
      number={1},
       pages={120\ndash 141},
         url={https://doi.org/10.1007/s12220-016-9676-0},
      review={\MR{3606547}},
}

\bib{hardies}{book}{
      author={Strömberg, Jan-Olov},
      author={Torchinsky, Alberto},
       title={Weighted {H}ardy spaces},
      series={Lecture notes in mathematics},
   publisher={Springer},
     address={Berlin},
        date={1989},
        ISBN={3-540-51402-3},
}

\bib{ref_Tur}{book}{
      author={Turesson, Bengt~Ove},
       title={Nonlinear potential theory and weighted {S}obolev spaces},
      series={Lecture notes in mathematics},
   publisher={Springer},
     address={Berlin},
        date={2000},
        ISBN={3-540-67588-4},
}

\end{biblist}
\end{bibdiv}

\end{document}